\documentclass[11pt, reqno]{amsart}
\usepackage{hyperref}
\usepackage[ansinew]{inputenc}	
\usepackage{graphicx}
\usepackage{color}%
\usepackage[numbers, square]{natbib}
\usepackage{mathrsfs}
\usepackage{mathptmx}
\usepackage{amsmath,amsthm,amsfonts,amssymb}
\usepackage{dsfont,extarrows,enumerate}
\usepackage{fullpage}
\usepackage{xcolor}
\usepackage[normalem]{ulem}

\allowdisplaybreaks[4]

\numberwithin{equation}{section}

\makeatletter
\def\@tocline#1#2#3#4#5#6#7{\relax
  \ifnum #1>\c@tocdepth 
  \else
    \par \addpenalty\@secpenalty\addvspace{#2}%
    \begingroup \hyphenpenalty\@M
    \@ifempty{#4}{%
      \@tempdima\csname r@tocindent\number#1\endcsname\relax
    }{%
      \@tempdima#4\relax
    }%
    \parindent\z@ \leftskip#3\relax \advance\leftskip\@tempdima\relax
    \rightskip\@pnumwidth plus4em \parfillskip-\@pnumwidth
    #5\leavevmode\hskip-\@tempdima
      \ifcase #1
       \or\or \hskip 1em \or \hskip 2em \else \hskip 3em \fi%
      #6\nobreak\relax
    \dotfill\hbox to\@pnumwidth{\@tocpagenum{#7}}\par
    \nobreak
    \endgroup
  \fi}
\makeatother

\theoremstyle{plain} \newtheorem{theorem}{Theorem}[section]
\theoremstyle{plain} \newtheorem{proposition}{Proposition}
\theoremstyle{plain} 
\theoremstyle{plain} \newtheorem{lemma}[theorem]{Lemma}
\theoremstyle{plain} \newtheorem{corollary}[theorem]{Corollary}
\theoremstyle{definition} \newtheorem{definition}{Definition}
\theoremstyle{definition} 
\theoremstyle{remark} \newtheorem{remark}[theorem]{Remark}
\theoremstyle{remark} \newtheorem{example}[theorem]{Example}

\newcommand{\R}{\mathbb R}

\newcommand{\NN}{\mathbb N}

\newcommand{\fC}{\mathfrak{C}}

\newcommand{\fK}{\mathfrak{K}}

\newcommand{\sA}{\mathcal{A}}
\newcommand{\sB}{\mathcal{B}}
\newcommand{\sC}{\mathcal{C}}

\newcommand{\sK}{\mathcal K}
\newcommand{\sL}{\mathcal L}
\newcommand{\sM}{\mathcal{M}}
\newcommand{\sP}{\mathcal{P}}

\newcommand{\dodn}{\overset{{\rm d}}\longrightarrow}

\newcommand{\Int}{\mathrm{Int}}

\newcommand{\ext}{\text{ext}}

\DeclareMathOperator{\cch}{\overline{\mathrm{conv}}\,}
\DeclareMathOperator{\ch}{\mathrm{conv}}

\newcommand{\wrap}{{\rm wr}}

\renewcommand{\epsilon}{\varepsilon}

\renewcommand{\emptyset}{\varnothing}
\usepackage{bbm}
\usepackage{fancybox}
\newlength{\querylen}
\title{Peelings and Wrappings of Families of Convex Sets with Applications to Strongly Convex Sets Generated by Random Samples}
\author{Alexander Marynych}
\address{Department of Mathematics, Kyiv School of Economics, Ukraine}
\email{omarynych@kse.org.ua}
\author{Mykyta Sadok}
\address{Faculty of Computer Science and Cybernetics, Taras Shevchenko National University of Kyiv, Kyiv, Ukraine}
\email{sadokmykyta@knu.ua}
\date{\today}

\begin{document}

\begin{abstract}
We introduce and study peeling and wrapping operations for families of compact convex sets. The two peeling procedures considered in the paper are the $m$-point peeling, obtained by intersecting the convex hulls remaining after all possible deletions of $m$ members of the family, and the recursive convex hull peeling, obtained by repeatedly removing the contributing sets, that is, those members whose deletion strictly changes the convex hull. Using polarity, we also introduce the dual wrapping operations for intersections of convex sets.

The deterministic part of the paper develops the geometric framework needed for these constructions. In particular, we study contributing sets under general position assumptions, explain the role of compactness of convex hulls of subfamilies, and prove continuity results for both peeling procedures with respect to a suitable vague convergence of locally finite point measures on the space of compact convex sets.

The probabilistic part applies this framework to $K$-hulls generated by random samples from a convex body $K$. Assuming that $K$ is strictly convex and regular, we prove that the m-point and recursive peelings of the polar bodies associated with the random $K$-hulls converge in distribution to the corresponding peelings of the limiting Poisson object. By polarity, this also yields distributional convergence of the associated wrapping operations for the rescaled random sets themselves.
\end{abstract}

\keywords{Ball convexity, convex wrappings, generalized convexity, $K$-strong convexity, peeling of convex hulls, strongly convex set, zero cell of a Poisson tessellation}

\subjclass[2010]{Primary: 60D05; secondary: 52A22}

\maketitle

\section{Introduction}\label{introduction}

Classical convex hull peeling starts with fixing a finite point set in $\R^d$, taking its convex hull, removing the extreme points, and then repeating the process. The nested layers produced in this way provide one of the earliest center-outward orderings in multivariate analysis, and they remain a useful geometric model for depth, trimming, and robustness~\cite{barnett1976ordering,chazelle1985convexlayers,eddy1982convex}. Recent research on random peeling has followed different directions: one line of work, for instance, studies the global behavior of many peeling layers and shows that, after rescaling, convex hull peeling admits a continuum limit described by a PDE whose level sets evolve by a power of Gaussian curvature \cite{calder2020limitshape}. Another direction studies the asymptotic behavior of the first peeling layers for Poisson samples, showing that in both smooth and polyhedral convex bodies one can obtain precise expectation and variance asymptotics, together with central limit theorems, for natural functionals of those outer layers \cite{calka2023unitball,calka-simple-poly}. Still, the theory is fundamentally built around convex hulls of point sets. Its main objects are the vertices of ordinary convex hulls, and its recursive structure relies on the combinatorics of finite point configurations.

At the same time, there is a body of literature on strong convexity (aka $K$-convexity), where closed half-spaces in a closed convex hull definition are replaced by translates of a fixed convex body $K$. This leads to $K$-hulls and, more generally, to $K$-strongly convex sets. Their geometry has been studied from several directions, including analytic, combinatorial, and metric ones \cite{balashov2000mstrongly,jahn2017ballconvex,langi2013ballspindle,polovinkin1996strongly}. The studied objects include random disc-polygons, random ball-polytopes, and $K$-hulls generated by random samples \cite{fodor2019ballpolytopes,fodor2014randomdiscpolygons,fodor2020generalised,mm22}. In particular, Marynych and Molchanov developed a facial theory for $K$-strongly convex sets generated by random samples and showed that the model admits a Poisson-type scaling limit~\cite{mm22}. The theory relies heavily on the study of convex hulls of the families of convex sets rather than point configurations. However, once one moves from points to the families of convex sets that naturally appear in these constructions, the classical peeling procedure no longer applies directly. There are several points at which the passage from points to sets creates new difficulties. First, for a general family of convex sets, it is not enough to speak about extreme points of the resulting convex hull. Instead, one has to identify which members of the family actually contribute to the hull. We call a set $L\in\sL$ contributing to a family of convex sets $\sL$ if deleting it strictly shrinks the convex hull of $\sL$. Second, for infinite families, distinctions between algebraic and closed convex hulls, compactness of convex hulls of subfamilies, and general position assumptions become essential. Without such assumptions, recursive peeling may exhibit degenerate behaviour: a layer may fail to be controlled by finitely many sets, or non-contributing sets may nevertheless touch the boundary of the hull. 

We introduce two peeling procedures. The first one is the $m$-point peeling. For a family of convex sets $\sL=\{L_i:i\in I\}$ and $0\leq m<|I| \leq \infty$, it is defined as
$$
\ch^{[m]}(\sL)
:=
\bigcap_{\substack{J\subset I\\ |J|=m}}
\cch\left(\bigcup_{i\in I\setminus J}L_i\right).
$$
Thus $\ch^{[m]}(\sL)$ consists of all points which remain in the closed convex hull no matter which $m$ members of the family are removed. This construction may be viewed as a deletion-robust hull for a family of convex sets. The $m$-point peeling is the part of the closed convex hull that survives the removal of any $m$ bodies. In this sense, it is analogous in spirit to classical robust depth regions and $k$-hull constructions~\cite{cole1987khulls,eddy1982convex}.

The second construction is the recursive convex hull peeling. Starting from $\sL^{(1)}=\sL$, we remove all contributing members of the current family, then repeat the same operation for the remaining family. The corresponding recursive hulls are denoted by
$$
\ch_{[m]}(\sL), \qquad m\geq 1.
$$
When $\sL$ is a finite family of singletons, this procedure reduces to the usual convex hull peeling of a finite point configuration. In the set-valued setting, however, the notion of a contributing set replaces that of an extreme point, and additional assumptions are needed to ensure that the recursion behaves regularly. 

The dual counterpart of these constructions is obtained by polarity. Since polarity transforms intersections into closed convex hulls of unions and reverses inclusions, peelings of the polar family correspond to enlargements of the original intersection. We call the resulting dual operations wrappings. Thus, whereas peeling removes outer layers from a convex hull, wrapping enlarges an intersection by allowing deletions of constraints. We define both $m$-point wrappings and recursive wrappings, and later use them to translate peeling convergence for polar bodies back into convergence statements for intersections.

The deterministic part of the paper establishes the geometric and topological foundation for these operations. We first recall the notion of general position for families of compact convex sets and develop several basic facts about contributing sets. In particular, we show that, under compactness and general position assumptions, non-contributing members lie in the relative interior of the hull generated by the remaining family, and the whole convex hull is already generated by the contributing members. We then introduce a cofinite version of the general position assumption, suitable for infinite families and for the repeated deletion operations appearing in recursive peeling.

The main deterministic results are continuity theorems for the two peeling procedures. We work with point measures on $\sK^d_0\setminus\{\{0\}\}$, where $\sK^d_0$ denotes the space of compact convex sets containing the origin. The topology is a vague topology adapted to the fact that atoms may accumulate only near the deleted singleton $\{0\}$. Under this convergence, and under appropriate general position and interior assumptions, we prove that the recursive peeling layers eventually stabilize at the level of contributing indices and that the corresponding convex hulls converge in the Hausdorff metric. We also prove the analogous Hausdorff convergence for the $m$-point peeling. This setting is particularly useful for our probabilistic application concerning $K$-hulls of random samples for which we now outline the setup. Let $K\in\sK^d_{(0)}$ be a convex body containing the origin in its interior, and let $\Xi_n=\{\xi_1,\ldots,\xi_n\}$ be a sample of independent points uniformly distributed in $K$. The set $
X_n:=\bigcap_{i=1}^n (K-\xi_i)$ describes the difference between $K$ and the $K$-hull of the sample. It is known that $nX_n$ converges in distribution to the zero cell $Z$ of a Poisson hyperplane tessellation determined by the surface area measure of $K$, see~\cite{mm22}. Equivalently, after polarity, the sets $n^{-1}X_n^{o}=n^{-1}\ch\left(\bigcup_{i=1}^{n}(K-\xi_i)^{o}\right)$ converge to $Z^{o}$, which can be represented as the convex hull generated by a limiting Poisson family of line segments. Assuming that $K$ is strictly convex and regular, we apply the deterministic peeling continuity results to this Poisson limit. We prove that, for every fixed $m$, the first $m$-point peelings of $n^{-1}X_n^{o}$ converge jointly in distribution to the corresponding peelings of $Z^{o}$. We also prove the analogous joint convergence for the first $m$ recursive peeling layers. Finally, applying polarity once more, we obtain the corresponding convergence of the $m$-point and recursive wrappings of the rescaled sets $nX_n$.

The paper is organized as follows. Section~\ref{prelims} contains the deterministic geometric framework. We recall the notion of general position, introduce contributing sets, define the two peeling procedures, and then define their polar duals, the wrapping operations. Section~\ref{main-results} applies these constructions to $K$-convexity and states the main probabilistic limit theorems. Section~\ref{sec:proof} is devoted to the proofs. We first establish the deterministic continuity results for peelings under vague convergence of point measures, and then derive the probabilistic convergence theorems for random $K$-hulls by the continuous mapping argument. Some technical proofs are postponed to the Appendix.

Throughout the paper we use the following notation: $\sK^d$ is the family of all compact convex sets in $\R^d$; $\sK^d_0$ (respectively, $\sK^d_{(0)}$) is the family of all compact convex sets in $\R^d$ containing the origin (respectively, the origin in the interior). The convergence with respect to the Hausdorff metric is denoted by
$\overset{H}{\to}$.

\section{Convex hulls of families of convex sets: general position, contributing sets and peelings}
\label{prelims}
We shall first recall some notions from convex geometry along with the notion of general position of a family of sets from $\sK^d$, first introduced in~\cite{mm22}. In what follows $\sL=\{L_i:i\in I\}$ is a collection of pairwise distinct compact convex sets in $\R^d$. Also $\ch(\sL):=\ch\left(\cup_{i\in I}L_i\right)$ and $\cch(\sL):=\cch\left(\cup_{i\in I}L_i\right)$.

Throughout the paper we work with the families $\sL$ such that
\begin{equation}\label{eq:union_is_compact}
\ch(\sL)\text{ is a compact set and }{\rm dim}\,\ch(\sL)>0,
\end{equation}
where $\dim$ denotes the affine dimension.

Recall that, for a convex compact set $L \in \sK^d$, a set $E \subseteq L$ is called an \emph{exposed face} if there exists a supporting hyperplane $H$ of $L$ such that $E = H \cap L$. Let $A$ be an arbitrary closed convex subset of some exposed face $F$ of $\ch(\sL)$. Put
\begin{equation*}
    \sM(\sL,\, A) := \{ L \in \sL: L \cap A \neq \emptyset\}. 
\end{equation*}
\begin{definition}[Definition 3.1 in~\cite{mm22}]
    \label{definition:convex-general-pos}
    Assume~\eqref{eq:union_is_compact}. The sets from $\sL$ are said to be in general position if, for each closed convex subset $A$ of each exposed face of $\ch(\sL)$, the family $\sM(\sL,\, A)$ is finite, and
    \begin{equation*}
        \sum_{L \in \sM(\sL,\, A)} (1  + \dim(A \cap L)) \leq \dim(A) + 1.
    \end{equation*}
\end{definition}

Note that the above definition applies to general convex sets whose algebraic convex hull is compact. It turns out that, for strictly convex sets, the definition can be replaced by much more practically useful and simple one, see 
\begin{proposition}[Proposition 3.6 in~\cite{mm22}]
    \label{prop:strictly-convex-general-pos}
    If all sets in $\sL$ are strictly convex, Definition~\ref{definition:convex-general-pos} is equivalent to any of the following:
    \begin{enumerate}
        \item For all $m=0, \dots, d-1$, and each $m$-dimensional face $F_m$ of $\ch(\sL)$, the family $\sM(\sL, F_m)$ is finite and
        \begin{equation*}
            \sum_{L \in \sM(\sL, F_m)} (1 + \dim(F_m \cap L)) = m + 1.
        \end{equation*}
        \item For all $m = 0, \dots, d -1$, and each $m$-dimensional face $F_m$ of $\ch(\sL)$, exactly $m+1$ sets among $\sL$ intersect $F_m$.
        \item For all $m = 0, \dots, d -1$, and each $m$-dimensional exposed face $F_m$ of $\ch(\sL)$, exactly $m+1$ sets among $\sL$ intersect $F_m$.
    \end{enumerate}
\end{proposition}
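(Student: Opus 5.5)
The plan is to prove the cycle of implications Definition~\ref{definition:convex-general-pos} $\Rightarrow$ (1) $\Rightarrow$ (2) $\Rightarrow$ (3) $\Rightarrow$ Definition~\ref{definition:convex-general-pos}. Throughout, strict convexity is used in one way: if $F$ is an exposed face of $\ch(\sL)$, cut out by a supporting hyperplane $H$, and $L\in\sL$ meets $F$, then $L\cap H$ is an exposed face of $L$. Since $L$ is strictly convex, $L\cap H$ is a single point. Hence $\dim(A\cap L)=0$ whenever $A\cap L\neq\emptyset$, and for strictly convex sets the weighted sum in Definition~\ref{definition:convex-general-pos} simply counts the members of $\sM(\sL,A)$. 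To handle faces that are not exposed, I would use the fact that every face of the compact convex set $\ch(\sL)$ is an exposed face of some exposed face, after finitely many iterations. Each set meeting a face $F$ therefore still meets it in a point, because it meets the exposed face containing $F$ in a single point.

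\textbf{First two links.} For Definition~\ref{definition:convex-general-pos} $\Rightarrow$ (1), take an $m$-dimensional face $F_m$ and apply the definition with $A=F_m$ (or with $A$ the exposed face containing it). This gives that $\sM(\sL,F_m)$ is finite with $\sum(1+\dim(F_m\cap L))\le m+1$. For the reverse inequality I would use that $F_m$ is a face of $\ch(\sL)$, so $F_m=\ch\bigl(\bigcup_{L\in\sM}(L\cap F_m)\bigr)$; this is the standard fact that extreme points of $F_m$ are extreme points of $\ch(\sL)$, and these lie in $\bigcup L$ because the algebraic hull is compact. The right-hand side is the convex hull of finitely many points, one per set, so $m=\dim F_m\le|\sM|-1$. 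Thus $|\sM|\ge m+1$, and equality holds. The step (1) $\Rightarrow$ (2) is immediate from the point-intersection observation, and (2) $\Rightarrow$ (3) is trivial.

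\textbf{Main step: (3) $\Rightarrow$ Definition~\ref{definition:convex-general-pos}.} This is where I expect the main obstacle. Let $A$ be a closed convex subset of an exposed face $F$ of dimension $k$. By (3), exactly $k+1$ sets meet $F$, each in one point. These points span $F$, so they are affinely independent and $F$ is a $k$-simplex. Its faces are spanned by subsets of these points, so $\sM(\sL,A)$ is finite.

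It remains to show that $A$ contains at most $\dim A+1$ of the vertices $p_L$. Since the vertices of a simplex are affinely independent, any $j$ of them lying in $A$ span a $(j-1)$-dimensional subset of $A$. Hence $j\le\dim A+1$, and the sum bound follows from $\dim(A\cap L)=0$.

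The delicate points are twofold. First, one must verify that no additional set meets $F$ at a non-vertex point; this is excluded by the count in (3). Second, one must handle the case $\dim F=d-1$ correctly and make sure $k\le d-1$, which holds because $\dim\ch(\sL)>0$ and proper exposed faces have dimension at most $d-1$.
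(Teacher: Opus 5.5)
The paper does not prove this proposition; it quotes it from \cite{mm22}, so there is no in-paper argument to follow. Your cycle Definition~\ref{definition:convex-general-pos} $\Rightarrow$ (1) $\Rightarrow$ (2) $\Rightarrow$ (3) $\Rightarrow$ Definition~\ref{definition:convex-general-pos} is a correct, self-contained proof.

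Its main ingredient is the representation $F=\ch\bigl(\bigcup_{L\in\sM(\sL,F)}(L\cap F)\bigr)$ for faces. You justify it via extreme points and the compactness of the algebraic hull in \eqref{eq:union_is_compact}, which is right. The paper uses the same representation (as Eq.~(3.1) of \cite{mm22}) in the proof of Lemma~\ref{lem:noncontrib-interior}. There, together with the dimension estimate for convex hulls of unions, it gives $\dim F+1\le\sum_{L\in\sM(\sL,F)}(1+\dim(L\cap F))$. Used that way, Definition~\ref{definition:convex-general-pos} $\Rightarrow$ (1) holds even without strict convexity. Strict convexity is genuinely needed for two things: collapsing the weighted sums to counts, and making every exposed face a simplex whose vertices are the contact points. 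The simplex structure is what controls arbitrary closed convex $A\subseteq F$ in your last step.

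Two points deserve a sentence in a write-up.

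First, the step ``$L\cap H$ is a single point'' uses the standard convention that a strictly convex set contains no segment in its boundary. This forces every member of $\sL$ to be a singleton or a full-dimensional body. Under a relative convention in which segments count as strictly convex, the proposition is false. For example, a segment $[a,b]$ together with a point $c$ off its line satisfies Definition~\ref{definition:convex-general-pos} and (1). Yet the edge $[a,b]$ meets only one member, which violates (2).

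Second, $k\le d-1$ holds simply because every exposed face lies in a supporting hyperplane, not because $\dim\ch(\sL)>0$. If $\dim\ch(\sL)<d$, then $\ch(\sL)$ itself is such an exposed face, and your argument covers that case as written.
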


As noted in the introduction, the central concept underlying recursive peeling is that of a contributing set. We now give its formal definition.

\begin{definition}
A set $L \in \sL$ is called a \emph{contributing} set to $\sL$ if $\ch(\sL) \neq \ch(\sL \setminus \{L\})$.  Equivalently, deleting $L$ from the family strictly shrinks the convex hull generated by the family. We also define
$$
\fC(\sL):=\{L \in \sL:\ch(\sL) \neq \ch(\sL \setminus \{L\})\}
$$
to be the set of contributing sets to $\sL$.
\end{definition}

Now we shall define two types of peelings of $\ch(\sL)$. 
\begin{definition}
    \label{m-point-peeling-def}
For an integer $m$ such that $0 \leq m < |I|\leq\infty$, the \emph{$m$-point peeling} of the family $\sL$ (or of the convex hull $\ch(\sL)$) is the set $\ch^{[m]}(\sL) = \bigcap_{J\subset I,|J|=m} \cch\left(\bigcup_{i \in I \setminus J} L_i\right)$.
\end{definition}

Thus, if $|I|$ is finite, then \(\ch^{[m]}(\sL)\) consists of all points which belong to the closed convex hull of every subfamily of $\sL$ of cardinality $|I|-m$. Alternatively, $\ch^{[m]}(\sL)$ consists of points that remain in the closed convex hull generated by the family no matter which $m$ sets are removed. Observe also that the closure on the right-hand side is essential, for otherwise $\ch\left(\bigcup_{i \in I \setminus J} L_i\right)$ might not be closed even under assumption~\eqref{eq:union_is_compact}.

The second type of peeling generalizes that of Calka and Quilan \cite{calka-simple-poly} and is defined as follows.
 
\begin{definition}
    \label{convex-hull-peeling}
Assume~\eqref{eq:union_is_compact}. Set $I_1:=I$ and $\sL^{(1)}:=\sL$ and put $d_1:={\rm dim}\,\ch(\sL)$. Suppose that $I_m$ and $\sL^{(m)}=\{L_i:i\in I_m\}$ have been defined. Put
$$
\sC^{(m)}(\sL):=
\left\{
i\in I_m:
\ch\left(\bigcup_{k\in I_m}L_k\right)
\neq
\ch\left(\bigcup_{k\in I_m\setminus\{i\}}L_k\right)
\right\}.
$$
Thus, $\sC^{(m)}(\sL)$ is the set of indices of sets contributing to $\sL^{(m)}$. If ${\rm dim}\,\ch(\sL^{(m)})<d_1$ or $\sC^{(m)}(\sL)=\varnothing$, the recursive peeling terminates. Otherwise, put
$$
I_{m+1}:=I_m\setminus \sC^{(m)}(\sL),
\qquad
\sL^{(m+1)}:=\{L_i:i\in I_{m+1}\}.
$$
The $m$-th recursive convex hull peeling is defined by
$$
\ch_{[m]}(\sL):=\cch(\sL^{(m)})
=
\cch\left(\bigcup_{i\in I_m}L_i\right),\quad m<D(\sL),
$$
where $D(\sL)$ is the first terminal index given by
$$
D(\sL):=\inf\{m\in\mathbb{N}:{\rm dim}\,\ch(\sL^{(m)})<d_1\text{ or }\sC^{(m)}(\sL)=\varnothing\}.
$$
Here we stipulate $\inf\varnothing=+\infty$. If $D(\sL)<\infty$, then the number of non-terminal recursive peeling layers is $D(\sL)-1$. We also make a convention that $\sC^{(m)}(\sL)=\varnothing$ for $m\geq D(\sL)$.
\end{definition}

We shall use convention that $\sC^{(1)}(\sL)=\sC(\sL)$ and observe also that 
$$
\fC(\sL):=\{L_i\in \sL:i\in\sC^{(1)}(\sL)\}\text{ and, more generally, }\fC(\sL^{(m)}):=\{L_i\in \sL:i\in\sC^{(m)}(\sL)\}.
$$

Note that when the family $\sL$ consists of a finite number of singletons, the recursive convex hull peeling of $\sL$ coincides with the classical convex hull peeling of a point configuration considered by Calka and Quilan~\cite{calka-simple-poly}. In what follows we use superscripts $[m]$ for the $m$-point peeling and subscripts $[m]$ for the recursive peeling.

At this point, a remark and a couple of illustrative examples are in order. Although Definition~\ref{convex-hull-peeling} is rather intuitive for finite families of sets, it may lead to unexpected phenomena when the family $\sL$ is infinite. Much of the bookkeeping introduced above (and to be introduced further) concerning the distinction between closed and algebraic convex hulls, the compactness of $\ch(\sL)$ and of the convex hulls of its subfamilies, and the general position of \(\sL\) and its subfamilies is intended precisely to rule out such degeneracies.

\begin{example}[Uncountable family of singletons]\label{ex:bad_behavior1}
Let $\sL:=\big\{\{(x,y)\}: x^2+y^2\leq 1\big\}$ be the collection of all singletons in the closed unit disk. Then 
$D(\sL)=3$, while $\fC(\sL^{(1)})=\big\{\{(x,y)\}: x^2+y^2=1\big\}$ is the collection of singletons lying on the unit circle, and $\sL^{(2)}=\big\{\{(x,y)\}: x^2+y^2<1\big\}$ is the collection of singletons in the open unit disk. However, no singleton in $\sL^{(2)}$ is contributing to $\sL^{(2)}$, since each such point lies in a segment contained in the open unit disk. This demonstrates the importance of the compactness assumption.
\end{example}

\begin{example}[The role of the general position concept]\label{ex:bad_behavior2}
Consider a family $\sL$ consisting of four sets: three singletons forming a triangle, and a convex set which touches one of the edges of this triangle while remaining inside it. This fourth set is not contributing to $\sL$, although it intersects the boundary of the convex hull. The obstruction is that the four sets are not in general position, see Lemma~\ref{lem:noncontrib-interior} below.

By its nature, the peeling procedure involves subfamilies of the original family $\sL$. Thus, general position should also be required for the relevant subfamilies. Since this concept is available only for families whose convex hulls are compact, compactness of the convex hulls of subfamilies must also be imposed; for infinite families $\sL$, it does not follow automatically from the compactness of $\ch(\sL)$. In conjunction with Example~\ref{ex:bad_behavior1}, this provides further evidence for the importance of the compactness assumption.
\end{example}

\subsection{Geometric lemmas on contributing sets}

The next two lemmas will formalize the intuitive idea that contributing sets must touch the boundary of the convex hull, and the rest of the sets should be in the topological interior of the hull. Whereas the former holds true under a very mild assumption that $\ch(\sL)$ is bounded, the latter requires compactness and the general position.
\begin{lemma}
        \label{lemma:cont-set-intersect-boundary}
        Let $\sL = \{ L_i, i \in I \}$ be a family of sets from $\sK^d$ and let $L\in\fC(\sL)$ be a contributing set to $\sL$. Assume that $\ch(\sL)$ is a bounded set different from a singleton. Then, $L \cap {\rm relbd}\, (\ch(\sL)) \neq \emptyset$.
        \end{lemma}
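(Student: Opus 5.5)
We argue by contradiction: assume $L\cap{\rm relbd}\,(\ch(\sL))=\emptyset$, so that $L\subset{\rm relint}\,(\ch(\sL))$, and show that $L\subset\ch(\sL\setminus\{L\})$. That inclusion gives $\ch(\sL)=\ch(\sL\setminus\{L\})$, contradicting $L\in\fC(\sL)$. Throughout we write $C:=\ch(\sL)$ and $C':=\ch(\sL\setminus\{L\})$, and we work inside the affine hull of $C$, so that relative interiors and boundaries are taken there.

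The key observation is that every point of $L$ lies in the relative interior of $C$, which we want to use to push it onto points of $C'$. Fix $x\in L$. Since $C$ is not a singleton, there is a line $\ell$ through $x$ in ${\rm aff}\,C$. Because $C$ is bounded and convex and $x\in{\rm relint}\,C$, the set $\ell\cap C$ is a bounded segment containing $x$ in its relative interior. Its endpoints $y_1,y_2$ are not necessarily in $C$, since $C$ is not closed. We therefore argue with algebraic points instead.

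Choose points $z_1,z_2\in C\cap\ell$ on opposite sides of $x$, close to the respective ends of $\ell\cap C$; they exist because $x$ is relatively interior. Every point of $C$ is a finite convex combination of points from $\bigcup_i L_i$. Writing $z_j$ in this way, we split off the part of the combination coming from $L$: $z_j=\lambda_j a_j+(1-\lambda_j)b_j$, where $a_j\in L$ (using convexity of $L$), $b_j\in C'$, and $\lambda_j\in[0,1]$. If $C'$ is empty, then $C=L$ is compact and $L$ meets ${\rm relbd}\,C$, which is immediate. So $C'\neq\emptyset$, and also $C=\ch(L\cup C')$.

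The cleanest route is to avoid the line and use extremality instead. Since $L$ is compact and $C$ is bounded, choose a linear functional $u$ and consider $\max_L u$. More simply, take a point $p\in C\setminus C'$, which exists because $L$ is contributing. Then $p=\lambda a+(1-\lambda)b$ with $a\in L$, $b\in C'$ (or $p=a$), and $\lambda>0$. Now consider the ray from $b$ through $a$ (if $C'\ne\emptyset$). Every point $b+t(a-b)$ with $t\ge 0$ lying in $C$ is a point of $\ch(L\cup C')$. Let $t^*=\sup\{t: b+t(a-b)\in \ch(L\cup\{b\})\}$. This set is the convex hull of the compact set $L\cup\{b\}$, hence compact, so the point $q=b+t^*(a-b)$ belongs to $\ch(L\cup\{b\})$ and is on its relative boundary along this ray.

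The main obstacle is passing from this to a point of $L$ on ${\rm relbd}\,C$. I would try the following. Apply a supporting-functional argument to the compact set $K_b:=\ch(L\cup\{b\})$. By $p\notin C'$ and $b\in C'$, the compact set $L$ is not contained in $C'$. Pick $a_0\in L\setminus C'$ and separate: there is no need for closedness if we instead maximize. Let $u$ be a direction in ${\rm aff}\,C$ that is a supporting normal of the compact convex set $\overline{C}$ at some point. Then $\sup_C u=\max\big(\max_L u,\sup_{C'}u\big)$.

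Take a functional $u$ with $\max_L u>\sup_{C'}u$. Such a $u$ exists by strictly separating $a_0$ from the closed convex set $\overline{C'}$, provided $a_0\notin\overline{C'}$. Then the maximizers of $u$ on $L$ lie on ${\rm relbd}\,C$ and belong to $L$, as required.

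The delicate case is $L\setminus C'\subset\overline{C'}\setminus C'$. Here I would refine the choice. Use the bounded convex set $C'$: points $a_0\in L\cap(\overline{C'}\setminus C')$ lie on ${\rm relbd}\,\overline{C'}$, hence on a supporting hyperplane $H$ of $\overline{C'}$, with $\sup_{C'}u=u(a_0)\le\max_L u$. The maximizers of $u$ over $L$ then attain $\sup_C u$. If $u$ is nonconstant on ${\rm aff}\,C$, these maximizers lie in ${\rm relbd}\,C$. If $u$ is constant on ${\rm aff}\,C$, we must choose $H$ within ${\rm aff}\,C$, which is possible when $\dim \overline{C'}=\dim C$.

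The remaining subcase, $\dim C'<\dim C$, has $C'$ contained in a proper affine subspace of ${\rm aff}\,C$, so $C'\subset{\rm relbd}\,C$. Then $a_0\in\overline{C'}$ is in the closure of the relative boundary of $C$, which is itself contained in the relative boundary, and $a_0\in L$. This completes the case analysis, which is the step requiring the most care.
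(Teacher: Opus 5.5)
Without the abandoned attempts, your argument is correct up to one faulty but dispensable step, and it follows a different route from the paper. The argument is: take $a_0\in L\setminus C'$, then split according to whether $L\not\subseteq\overline{C'}$ or $L\subseteq\overline{C'}$.

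The paper argues by contradiction. Assuming $L\subseteq\mathrm{relint}\,\cch(\sL)$, it writes each extreme point $x$ of $\cch(\sL)$ as a limit of $\lambda_n y_n+(1-\lambda_n)z_n$ with $y_n\in L$, $z_n\in C'$. It passes to convergent subsequences and uses extremality of $x$ together with $y\in\mathrm{relint}\,\cch(\sL)$ to force $\lambda=0$. Krein--Milman then gives $\overline{C}=\overline{C'}$, hence $L\subseteq\mathrm{relint}\,C'\subseteq C'$, so $L$ is not contributing.

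Your case $L\not\subseteq\overline{C'}$ replaces Krein--Milman and the subsequence extraction by one strict separation and a maximisation of $u$ over the compact set $L$. This is more elementary and produces the boundary point explicitly. You should state why that point lies in $\mathrm{relbd}\,C$: $u$ attains $\sup_C u$ there and is non-constant on $C$, since $C'\neq\emptyset$ and $\max_L u>\sup_{C'}u$. The paper's route instead yields the structural fact $\overline{C}=\overline{C'}$ under the contradiction hypothesis.

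The faulty step is your last paragraph: $\dim C'<\dim C$ does not imply $C'\subseteq\mathrm{relbd}\,C$. For $L=[(0,-1),(0,1)]$ and $C'=[(-1,0),(1,0)]$, $C$ is the square with vertices $(\pm1,0),(0,\pm1)$ and $C'$ is a diagonal through its interior. The subcase is in fact empty. In the delicate case $L\subseteq\overline{C'}$, so $C=\ch(L\cup C')\subseteq\overline{C'}$ and $\mathrm{aff}\,C=\mathrm{aff}\,C'$.

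No supporting hyperplane is needed there at all. The equality $\overline{C}=\overline{C'}$ gives $\mathrm{relint}\,C=\mathrm{relint}\,C'\subseteq C'$, so $a_0\in L\setminus C'$ is itself a point of $L\cap\mathrm{relbd}\,C$. This is exactly the closing step of the paper's proof.

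Finally, several pieces are never used: the contradiction hypothesis $L\subseteq\mathrm{relint}\,C$, the line $\ell$, the points $z_1,z_2$, and the ray with $t^*$. Delete them and present the proof directly via the three cases $C'=\emptyset$, $L\not\subseteq\overline{C'}$, $L\subseteq\overline{C'}$.
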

        \begin{proof}
First, observe that convexity entails 
$$
{\rm relint}\, (\ch(\sL))={\rm relint}\, (\cch(\sL))\quad\text{and}\quad {\rm relbd}\, (\ch(\sL))={\rm relbd}\, (\cch(\sL)).
$$ 
We shall prove by contradiction. Assume $L \cap {\rm relbd}\, (\cch(\sL)) = \varnothing$. This means that $L \subseteq {\rm relint}(\cch(\sL))$.
             By the Krein-Milman theorem, a compact convex set $\cch(\sL)$ is equal to the convex hull of its extreme points:
            \begin{equation}
                \label{k-m-equality}
                \cch\left(\sL\right) = \ch\left(\ext\left(\cch\left(\sL\right)\right)\right).
            \end{equation}
            A fundamental property of the extreme points is that they must lie on the relative boundary of the set:
            \begin{equation*}
                \ext\left(\cch\left(\sL\right)\right) \subseteq {\rm relbd}\,(\cch(\sL)).
            \end{equation*}
            Since $L \subseteq {\rm relint}\,(\cch(\sL))$, it follows that $L$ cannot contain any extreme points of $\cch(\sL)$. Assume that we have proved
            \begin{equation}\label{eq:lem-2-1_proof1}
                \ext\left(\cch\left(\sL\right)\right) \subseteq \cch\left(\bigcup_{J \in \sL \setminus \{L\}} J\right).
            \end{equation}
            Taking the convex hull of both sides and using \eqref{k-m-equality}:
            \begin{equation*}
                \cch(\sL) = \ch(\ext(\cch(\sL))) \subseteq \cch\left(\bigcup_{J \in \sL \setminus \{L\}} J\right) = \cch(\sL \setminus \{L\}).
            \end{equation*}
            Therefore, 
            $$
            L\subseteq {\rm relint}(\cch(\sL)) = {\rm relint}(\cch(\sL \setminus \{L\}))={\rm relint}(\ch(\sL \setminus \{L\}))\subseteq \ch(\sL \setminus \{L\}).
            $$
            Thus, $L$ is not contributing. 
            
            We shall now verify~\eqref{eq:lem-2-1_proof1}. Let $x\in\ext(\cch(\sL))$. There exists a sequence
$$
x_n\in\ch\left(L\cup \bigcup_{J\in\mathcal L\setminus\{L\}}J\right),\quad n\in\mathbb{N},
$$
such that $x_n\to x$. Write
$$
   x_n=\lambda_n y_n+(1-\lambda_n)z_n,\quad \lambda_n\in[0,1],\quad
   y_n\in L,\quad
   z_n\in
   \ch\left(
      \bigcup_{J\in\mathcal L\setminus\{L\}}J
   \right).
$$
Since $L$ is compact and $(\lambda_n)\subset[0,1]$, after passing to
a subsequence we may assume that
$$
   \lambda_n\to\lambda,
   \quad
   y_n\to y\in L .
$$
Moreover, since $\ch(\sL)$ is bounded, the sequence $(z_n)$ is bounded. Passing to a further subsequence, we may
assume that
$$
   z_n\to z \in \cch\left(\bigcup_{J\in\mathcal L\setminus\{L\}}J\right).
$$
Consequently, $x=\lambda y+(1-\lambda)z$. Observe that $y\in L\subset{\rm relint}\,(\cch(\sL))$, whereas 
$x\in\ext (\cch(\sL))\subseteq {\rm relbd}\,(\cch(\sL))$. Hence $x\ne y$. If $\lambda>0$, then the representation
$x=\lambda y+(1-\lambda)z$ writes the extreme point $x$ as a non-trivial convex combination of two
points of $\cch(\sL)$, which is impossible. Therefore, $\lambda=0$, and hence
$$
   x=z\in \cch\left(\bigcup_{J\in\mathcal L\setminus\{L\}}J\right).
$$
This proves~\eqref{eq:lem-2-1_proof1}. The proof of lemma is complete.
\end{proof}

\begin{lemma}\label{lem:noncontrib-interior}
Let $\sL=\{L_i,\ i\in I\}$ be a family of sets in $\sK^d$ satisfying~\eqref{eq:union_is_compact} and which are also in general position. If
$i\notin \sC(\sL)$ (equivalently, $L_i\notin \fC(\sL)$), then 
$$
L_i \subseteq {\rm relint}\,\left(\ch\left(\bigcup_{j\in I\setminus\{i\}} L_j\right)\right)={\rm relint}\,\left(
    \ch\left(\bigcup_{j\in I} L_j\right)\right).
$$
\end{lemma}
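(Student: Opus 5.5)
The plan has two steps. First, show that the two convex hulls in the statement coincide. Second, show that $L_i$ avoids the relative boundary, using general position to rule out the boundary-touching situation of Example~\ref{ex:bad_behavior2}.

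Write $C:=\ch(\sL)$ and $C_i:=\ch(\bigcup_{j\ne i}L_j)$. Since $L_i$ is non-contributing, we have $C=C_i$ by definition. Hence the equality of the two relative interiors in the statement is immediate, and it suffices to prove $L_i\subseteq {\rm relint}\, C$. Because $C$ is compact by \eqref{eq:union_is_compact}, $C$ is closed and equals $\cch(\sL)$. Its relative boundary is then the union of its proper exposed faces, since every relative boundary point lies on some supporting hyperplane (taken within the affine hull) that does not contain all of $C$.

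Argue by contradiction. Suppose $x\in L_i\cap {\rm relbd}\, C$, and let $F=H\cap C$ be a proper exposed face containing $x$. Consider the face $F$ and a suitable closed convex subset $A\subseteq F$, to which Definition~\ref{definition:convex-general-pos} will be applied. The key observation is that $F$ is itself a face of $C=C_i$. A point $y\in F$ is a convex combination of points from $\bigcup_{j\ne i}L_j$, and because $F$ is a face, all those points lie in $F$. Hence $F=\ch\bigl(\bigcup_{j\ne i}(L_j\cap F)\bigr)$; compactness of $C$ makes $F$ compact, and finiteness of $\sM(\sL,F)$ from general position ensures this is a finite union of compact sets. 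Now take $A:=F$. By general position,
\[
\sum_{L\in\sM(\sL,F)}(1+\dim(F\cap L))\le \dim F+1 .
\]
On the other hand, $F$ is the convex hull of the finitely many compact convex sets $F\cap L_j$, $L_j\in\sM(\sL,F)\setminus\{L_i\}$. A convex hull of $k$ convex sets of dimensions $e_1,\dots,e_k$ has dimension at most $\sum_j(1+e_j)-1$. Therefore
\[
\dim F+1\le\sum_{L\in\sM(\sL,F),\,L\ne L_i}(1+\dim(F\cap L)).
\]
Adding the term for $L_i$, which is at least $1$ because $x\in F\cap L_i$, gives
\[
\sum_{L\in\sM(\sL,F)}(1+\dim(F\cap L))\ge\dim F+2,
\]
contradicting general position. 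Hence $L_i\cap{\rm relbd}\,C=\varnothing$, i.e. $L_i\subseteq{\rm relint}\,C$.

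I expect the main obstacle to be the step $F=\ch\bigl(\bigcup_{j\neq i}(L_j\cap F)\bigr)$: it needs the algebraic hull $C_i$ (not merely its closure) to equal $C$, which is exactly what non-contributing gives, together with the finiteness of $\sM(\sL,F)$. I would also double-check the dimension count $\dim\ch(\bigcup_j M_j)\le\sum_j(\dim M_j+1)-1$, which follows from the affine hull of a union of affine spaces. Finally, the case $\dim F=0$ should be checked separately: there $F=\{x\}$ must be hit by some $L_j$ with $j\ne i$ as well as by $L_i$, which already contradicts the bound $\dim F+1=1$.
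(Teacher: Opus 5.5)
Your proposal is correct and follows essentially the same route as the paper: you pick a point of $L_i$ on the (relative) boundary, take an exposed face $F$ through it, and write $F=\ch\bigl(\bigcup_{j\neq i}(L_j\cap F)\bigr)$. You then play the dimension bound for a hull of finitely many convex sets against the general position inequality, which forces $1+\dim(L_i\cap F)\le 0$. The differences are cosmetic: you derive the face representation directly from the face property of the algebraic hull $C=C_i$, where the paper cites Eq.~(3.1) of \cite{mm22}, and you work with relative boundaries where the paper first reduces to the full-dimensional case.
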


\begin{proof}
Set $C:=\ch\left(\bigcup_{j\in I\setminus\{i\}}L_j\right)$ and observe that also $C=\ch(\sL)$. All topological notions in the proof should be understood relative to
${\rm aff}\,C$. Thus, replacing the ambient space by ${\rm aff}\,C$, we may assume without loss of generality that $C$ is full-dimensional. Since $i\notin \sC(\sL)$, removing $L_i$ does not change the convex hull, hence $\ch\left(\bigcup_{j\in I} L_j\right)=C$. In particular,
$L_i\subset C$. Assume that $L_i\cap \partial C\neq \varnothing$ and pick a point $x\in L_i\cap \partial C$. 
Let $H$ be a supporting hyperplane of $C$ in ${\rm aff}\,C$ at $x$, and let $F:=C\cap H$ be the exposed face of $C$ determined by $H$. Define $J:=\{j\in I\setminus\{i\}: L_j\cap F\neq \varnothing\}$ and note that $J$ is a finite set by the general position assumption. Then, 
\begin{equation*}
    F = \ch\left(\bigcup_{j\in J}(L_j\cap F)\right),    
\end{equation*}
see Eq.~(3.1) in~\cite{mm22}.

Now use the dimension estimate: if $m\in\mathbb{N}$ and $B_1,\dots,B_m$ are convex sets, then
\begin{equation*}
    \dim\ch\left(\bigcup_{r=1}^m B_r\right)
    \le \sum_{r=1}^m (\dim B_r+1)-1.
\end{equation*}
Applying this to the family $\{L_j\cap F\}_{j\in J}$ and using the representation above, we get
\begin{equation}\label{eq:lem-2-2-eq1}
    \dim F +1
    \le
    \sum_{j\in J}\left(1+\dim(L_j\cap F)\right).
\end{equation}
On the other hand, since $F$ is itself an exposed face of $C$, $C=\ch\left(\bigcup_{j\in I} L_j\right)$ and $x\in L_i\cap F\neq \varnothing$, the general position assumption implies that
\begin{equation}\label{eq:lem-2-2-eq2}
    \sum_{j\in J\cup \{i\}} \left(1+\dim(L_j\cap F)\right)\le \dim F+1.
\end{equation}
Combining~\eqref{eq:lem-2-2-eq1} and~\eqref{eq:lem-2-2-eq2} yields
\begin{equation*}
    1+\dim(L_i\cap F)\le 0,
\end{equation*}
which is impossible. Thus, $L_i\cap \partial C=\varnothing$. Since $L_i\subset C$, it follows that $L_i\subset \Int (C)$ in the relative topology of ${\rm aff}\,C$, that is, $L_i\subseteq {\rm relint}(C)$
in $\mathbb R^d$. This completes the proof.
\end{proof}

\begin{remark}
Example~\ref{ex:bad_behavior2} shows that the general position assumption (and hence the compactness of $\ch(\sL)$) is essential in Lemma~\ref{lem:noncontrib-interior}.
\end{remark}

 \begin{lemma}\label{lem:increase_family}
   Let $\sL_1\subseteq \sL_2\subseteq\sL$ be two families of sets in $\mathcal{K}^d$ such that $\ch(\sL_1)=\ch(\sL_2)$. Assume that $\sL_2$ satisfies~\eqref{eq:union_is_compact} and is in general 
   position. Then $\fC(\sL_1)=\fC(\sL_2)$.
   \end{lemma}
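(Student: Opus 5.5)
The plan is to prove the two inclusions $\fC(\sL_2)\subseteq\fC(\sL_1)$ and $\fC(\sL_1)\subseteq\fC(\sL_2)$ separately. Throughout I write $C:=\ch(\sL_1)=\ch(\sL_2)$. By~\eqref{eq:union_is_compact} for $\sL_2$, the set $C$ is compact and not a singleton. I also use two trivial monotonicity facts. For every $L$ and each $k\in\{1,2\}$ we have $\ch(\sL_k\setminus\{L\})\subseteq C$. For every $L$ we also have $\ch(\sL_1\setminus\{L\})\subseteq \ch(\sL_2\setminus\{L\})$.

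\emph{Step 1: contributing sets of $\sL_2$ lie in $\sL_1$.} Take $L\in\sL_2\setminus\sL_1$. Then $\sL_1\subseteq\sL_2\setminus\{L\}$, so
$$C=\ch(\sL_1)\subseteq\ch(\sL_2\setminus\{L\})\subseteq C.$$
Hence $L$ is not contributing to $\sL_2$, and therefore $\fC(\sL_2)\subseteq\sL_1$.

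\emph{Step 2: $\fC(\sL_2)\subseteq\fC(\sL_1)$.} Let $L\in\fC(\sL_2)$; by Step 1, $L\in\sL_1$. Suppose, for contradiction, that $\ch(\sL_1\setminus\{L\})=C$. Then
$$C=\ch(\sL_1\setminus\{L\})\subseteq\ch(\sL_2\setminus\{L\})\subseteq C.$$
This says $L$ is not contributing to $\sL_2$, a contradiction. Hence $L\in\fC(\sL_1)$. This step uses no hypotheses beyond the inclusion $\sL_1\subseteq\sL_2$.

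\emph{Step 3: $\fC(\sL_1)\subseteq\fC(\sL_2)$.} This is the only place where general position enters, and I expect it to be the main point. Take $L\in\sL_1$ with $L\notin\fC(\sL_2)$; it suffices to show $L\notin\fC(\sL_1)$. Since $\sL_2$ satisfies~\eqref{eq:union_is_compact} and is in general position, Lemma~\ref{lem:noncontrib-interior} gives $L\subseteq{\rm relint}\,C$, so $L\cap{\rm relbd}\,C=\varnothing$. Now apply Lemma~\ref{lemma:cont-set-intersect-boundary} to the family $\sL_1$. Its hull $\ch(\sL_1)=C$ is bounded and not a singleton, so every contributing set of $\sL_1$ meets ${\rm relbd}\,\ch(\sL_1)={\rm relbd}\,C$. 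Since $L$ does not meet ${\rm relbd}\,C$, it follows that $L\notin\fC(\sL_1)$.

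The point to stress in Step 3 is that general position is needed only for the larger family $\sL_2$. This is used to move the non-contributing set into the relative interior. The boundary lemma, by contrast, requires only boundedness, so it can be applied to $\sL_1$ even though $\sL_1$ itself is not assumed to be in general position.
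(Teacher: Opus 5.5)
Your proof is correct and follows the paper's argument essentially step by step. The monotonicity observations give $\fC(\sL_2)\subseteq\fC(\sL_1)$, and the reverse inclusion applies Lemma~\ref{lem:noncontrib-interior} to the general-position family $\sL_2$ and Lemma~\ref{lemma:cont-set-intersect-boundary} to $\sL_1$, exactly as in the paper; you also make explicit the check that $C$ is bounded and not a singleton, which the paper leaves implicit.
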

   \begin{proof}
Denote $C=\ch(\sL_1)=\ch(\sL_2)$. First, if $L\in \sL_2\setminus \sL_1$, then $\sL_1\subseteq \sL_2\setminus\{L\}$ and hence
$C=\ch(\sL_1)\subseteq \ch(\sL_2\setminus\{L\})\subseteq \ch(\sL_2)=C$. Thus, $L\notin \fC(\mathcal L_2)$. This rules out the possibility that some $L\in\sL_2\setminus\sL_1$ contributes to $\sL_2$.

If $L\in\sL_1$ but $L\notin \fC(\sL_1)$, then $\ch(\sL_1\setminus\{L\})=C$. Using $\sL_1\setminus\{L\}\subseteq \sL_2\setminus\{L\}\), we get $C\subseteq \ch(\sL_2\setminus\{L\})\subseteq \ch(\mathcal L_2)=C$. Hence $L\notin \fC(\sL_2)$. This proves
$$
\fC(\sL_2)\subseteq \fC(\sL_1).
$$
Conversely, suppose that $L\in\sL_1$ but $L\notin \fC(\sL_2)\). By Lemma~\ref{lem:noncontrib-interior}, applied to the general-position family $\sL_2$,
$$
L\subseteq {\rm relint}\bigl(\ch(\mathcal L_2\setminus\{L\})\bigr)={\rm relint}\,C.
$$
Therefore $L$ does not hit the relative boundary of $C=\ch(\sL_1)$, whence can not be a contributing set to $\sL_1$ by Lemma~\ref{lemma:cont-set-intersect-boundary}. Thus, $L\notin\fC(\sL_1)$.
\end{proof}

    The following lemma can be thought of as a variant of the classic Krein-Milman theorem, but for contributing sets. 
    \begin{lemma}
        \label{lemma:ch-is-ch-of-contrib}
         Let $\sL=\{L_i,\ i\in I\}$ be a family of sets in $\sK^d$ satisfying~\eqref{eq:union_is_compact} and which are also in general position. Then
         \begin{equation*}
             \ch(\sL) = \ch \left(\bigcup_{i \in \sC(\sL)} L_i \right).
         \end{equation*}
     \end{lemma}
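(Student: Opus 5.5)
The inclusion $\ch(\bigcup_{i\in\sC(\sL)}L_i)\subseteq\ch(\sL)$ is trivial, so the work is in the reverse inclusion. I will reduce it to extreme points and use the general position assumption locally at exposed faces, in the spirit of the proof of Lemma~\ref{lem:noncontrib-interior}.

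Write $C:=\ch(\sL)$, which is compact by~\eqref{eq:union_is_compact}, and $C':=\ch\bigl(\bigcup_{i\in\sC(\sL)}L_i\bigr)\subseteq C$. By Krein--Milman, and since $C$ is compact, $C=\ch(\ext C)$. It therefore suffices to show that every extreme point $x$ of $C$ lies in $C'$.

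Fix $x\in\ext C$. Since $x\in C=\ch(\bigcup_i L_i)$ is extreme, $x$ cannot be a nontrivial convex combination of points of $\bigcup_iL_i$. Hence $x\in L_j$ for some $j\in I$.

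I now claim that any such $L_j$ is contributing. Suppose not, that is, $j\notin\sC(\sL)$. Lemma~\ref{lem:noncontrib-interior} then gives $L_j\subseteq{\rm relint}\,C$. But an extreme point lies in ${\rm relbd}\,C$; this uses $\dim C>0$, which holds by~\eqref{eq:union_is_compact}. This is a contradiction. So $j\in\sC(\sL)$, hence $x\in L_j\subseteq C'$.

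It follows that $\ext C\subseteq C'$, and since $C'$ is convex, $C=\ch(\ext C)\subseteq C'$.

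The main subtlety is the step $x\in\bigcup_iL_i$. It holds because every point of the algebraic convex hull is a finite convex combination of points of the union, and an extreme point admits only the trivial representation. This is exactly where working with the algebraic hull $\ch(\sL)$ rather than $\cch(\sL)$ matters, and where compactness of $\ch(\sL)$ (so that Krein--Milman applies to $C$ itself, with no closure) is essential.

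The appeal to Lemma~\ref{lem:noncontrib-interior} carries the general position hypothesis. Without it, as Example~\ref{ex:bad_behavior2} suggests, a non-contributing set could touch an extreme point. A simple example is a duplicate vertex shared by two sets, in which case neither set is contributing.
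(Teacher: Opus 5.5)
Your proof is correct. It differs from the paper's in the step that shows an extreme point of $C$ is covered by a contributing set.

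The paper goes through exposed faces. It uses Lemma~\ref{lem:noncontrib-interior} to see that non-contributing sets miss $\partial C$. It then invokes the face representation $F=\ch\bigl(\bigcup_{L\in\sM(\sL,F)}(F\cap L)\bigr)$ (Eq.~(3.1) of \cite{mm22}) to conclude that every proper exposed face lies in $\ch\bigl(\bigcup_{i\in\sC(\sL)}L_i\bigr)$. Only then does it pass to extreme points, each of which lies in a proper exposed face, and apply Krein--Milman.

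You bypass the face machinery. Because $C$ is the \emph{algebraic} hull, an extreme point admits only the trivial convex representation, so it lies in some $L_j$. Lemma~\ref{lem:noncontrib-interior} together with $\ext C\subseteq{\rm relbd}\,C$ then forces $j\in\sC(\sL)$.

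Your route is more elementary and needs nothing from \cite{mm22} beyond what is already inside Lemma~\ref{lem:noncontrib-interior}. The paper's route yields a little more: every exposed face of $C$ is generated by the traces $F\cap L_i$ of contributing sets, which is the form in which general position is phrased.

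In both arguments, the identity $C=\ch(\ext C)$ without closure is Minkowski's finite-dimensional version of Krein--Milman. That is exactly the version you need, since $\ch\bigl(\bigcup_{i\in\sC(\sL)}L_i\bigr)$ is not known a priori to be closed when $\sC(\sL)$ is infinite.
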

\begin{proof}
Let $C = \ch(\sL)$ and let $C_{\sC} = \ch(\bigcup_{i \in \sC(\sL)} L_i)$. The inclusion $C_{\sC} \subseteq C$ is obvious. We must show the reverse inclusion, $C \subseteq C_{\sC}$. As was explained in the previous proof we may assume that $C$ is full-dimensional.
    
By Lemma~\ref{lem:noncontrib-interior} if $i\notin \mathcal{C}(\sL)$, then 
$L_i\subset \Int(C)$. In particular, no non-contributing set meets any proper exposed face of $C$, because every exposed face is contained in $\partial C$. Take now any exposed face $F$ of $C$. As we have observed,
$$
L_i\in \mathcal{M}(\sL,F)~\Longrightarrow~i\in \sC(\sL),
$$
whence
$$
F=\ch\left(\bigcup_{L\in\mathcal{M}(\sL,F)}(F\cap L)\right)\subseteq \ch\left(\bigcup_{i \in \sC(\sL)}(F\cap L_i)\right)\subseteq \ch\left(\bigcup_{i \in \sC(\sL)}L_i\right).
$$
Thus, every proper exposed face of $C$ is contained in $C_{\sC}$. In particular, every extreme point of $C$ is contained in $C_{\sC}$, see p.~75 in~\cite{schneider}. By the Krein-Milman theorem, this yields $C\subseteq C_{\sC}$. The proof is complete.
\end{proof}

\begin{example}
Take $d=1$, $I=\mathbb{N}$ and $L_i:=\{i\}$, $i\in\mathbb{N}$. Then $C=\ch(\sL)=[1,+\infty)$ is not compact. Although the corresponding dimension condition from the definition of general position is satisfied, the conclusion of Lemma~\ref{lemma:ch-is-ch-of-contrib} fails: $\sC(\sL)=\{1\}$, while $C_{\sC}=\{1\}\neq C$. 

\end{example}

The next lemma verifies an intuitive fact: the convex hull obtained after removing one contributing set can be represented as a convex hull of the rest of contributing elements together with contributing sets of the next layer.
However, from now on we shall work intensively also with subfamilies of $\sL$ obtained by removing some members. Thus, we need to impose compactness and general position assumption also on subfamilies of $\sL$. The easiest way is to impose the restriction on all subsets. This condition is too strong in many applications. For this reason we shall introduce the following

\begin{definition}
A family $\sL=\{L_i:i\in I\}$ of subsets of $\sK^d$ satisfying~\eqref{eq:union_is_compact} is said to be in general position {\em cofinitely} if, for every finite set $F\subset I$, the family $\sL_F:=\{L_i:i\in I\setminus F\}$ is in general position. In particular, then $\ch(\sL_F)$ is compact for every finite $F\subset I$.
\end{definition}

\begin{lemma}\label{lemma:remove-1-point-from-ch}
Let $\sL := \{ L_i, i \in I\}$ be a family of sets from $\sK^d$ which satisfies~\eqref{eq:union_is_compact} and $D(\sL)\geq 3$. Suppose further, that either every subfamily of $\sL$ is in general position or $\sL$ is in general position cofinitely and $|\sC^{(1)}(\sL)|<+\infty$. Then, for every $i\in\sC(\sL)$, we have
\begin{equation*}
\ch\left(\bigcup_{j \in I \setminus \{i \}} L_j\right) = \ch\left(\bigcup_{j \in \left( \sC^{(1)}(\sL) \cup \sC^{(2)}(\sL)\right) \setminus \{ i\} } L_j\right).
\end{equation*}
\end{lemma}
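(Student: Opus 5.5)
Write $C_1:=\sC^{(1)}(\sL)$, $C_2:=\sC^{(2)}(\sL)$ and $I_2=I\setminus C_1$. The plan is to prove the inclusion ``$\supseteq$'' trivially, and to obtain ``$\subseteq$'' by applying Lemma~\ref{lemma:ch-is-ch-of-contrib} to the family $\sL_{\{i\}}:=\{L_j: j\in I\setminus\{i\}\}$ and then locating its contributing sets inside $(C_1\cup C_2)\setminus\{i\}$. The inclusion ``$\supseteq$'' holds because the right-hand side is the convex hull of a subfamily of $\sL_{\{i\}}$. In both alternatives of the hypothesis, $\sL_{\{i\}}$ and $\sL^{(2)}$ are in general position and have compact convex hulls: either every subfamily is, or $\{i\}$ and $C_1$ are finite and cofinite general position applies. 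We have $\dim\ch(\sL_{\{i\}})\ge\dim\ch(\sL^{(2)})=d_1$ since $D(\sL)\ge3$. Hence Lemma~\ref{lemma:ch-is-ch-of-contrib} gives $\ch(\sL_{\{i\}})=\ch(\bigcup_{j\in\sC(\sL_{\{i\}})}L_j)$, and it suffices to show
\[
\sC(\sL_{\{i\}})\subseteq (C_1\cup C_2)\setminus\{i\}.
\]

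Take $j\in\sC(\sL_{\{i\}})$ with $j\notin C_1$, so $j\in I_2$. We must show that $j\in C_2$, that is, $j$ contributes to $\sL^{(2)}$. Argue by contradiction and suppose $L_j$ does not contribute to $\sL^{(2)}$. Since $\sL^{(2)}$ is in general position with compact hull, Lemma~\ref{lem:noncontrib-interior} gives
\[
L_j\subseteq {\rm relint}\,\ch\Bigl(\bigcup_{k\in I_2\setminus\{j\}}L_k\Bigr).
\]
As $I_2\setminus\{j\}\subseteq I\setminus\{i,j\}$, this yields $L_j\subseteq \ch(\bigcup_{k\in I\setminus\{i,j\}}L_k)$. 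Therefore removing $L_j$ from $\sL_{\{i\}}$ does not change its convex hull, contradicting $j\in\sC(\sL_{\{i\}})$.

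Actually, the interior statement is not even needed. Non-contribution to $\sL^{(2)}$ means directly that $\ch(\sL^{(2)}\setminus\{L_j\})=\ch(\sL^{(2)})\ni L_j$, and monotonicity of hulls finishes the argument. So the key step is just this monotonicity, combined with Lemma~\ref{lemma:ch-is-ch-of-contrib} for $\sL_{\{i\}}$.

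I expect the main obstacle to be verifying the hypotheses of Lemma~\ref{lemma:ch-is-ch-of-contrib} for $\sL_{\{i\}}$ in the cofinite case, namely compactness of its convex hull and general position, including the positive-dimension requirement in~\eqref{eq:union_is_compact}. This is where $D(\sL)\ge3$ (which guarantees $C_2\neq\varnothing$ and the dimension bound) and the finiteness of $C_1$ enter.
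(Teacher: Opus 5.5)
Your proof is correct. It shares the paper's skeleton: apply Lemma~\ref{lemma:ch-is-ch-of-contrib} to $\sL\setminus\{L_i\}$, then show by contradiction that each of its contributing indices lies in $\sC^{(1)}(\sL)\cup\sC^{(2)}(\sL)$.

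The two arguments differ in how they exclude the case $j\notin\sC^{(1)}(\sL)\cup\sC^{(2)}(\sL)$.

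The paper's route has three steps:
\begin{enumerate}
\item It applies Lemma~\ref{lem:noncontrib-interior} to the general-position family $\sL^{(2)}$ to place $L_j$ in ${\rm relint}\,\ch_{[2]}(\sL)$.
\item It uses $D(\sL)\ge 3$ to see that $\ch_{[2]}(\sL)$ and $\ch(\sL\setminus\{L_i\})$ have the same dimension, so their relative interiors are comparable.
\item It concludes with Lemma~\ref{lemma:cont-set-intersect-boundary}.
\end{enumerate}

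Your second version replaces all of this with a purely set-theoretic fact: non-contribution is inherited by larger families. Since $i\in\sC^{(1)}(\sL)$, we have $\sL^{(2)}\subseteq\sL\setminus\{L_i\}$. Then $L_j\subseteq\ch(\sL^{(2)}\setminus\{L_j\})$ immediately gives $L_j\subseteq\ch\bigl((\sL\setminus\{L_i\})\setminus\{L_j\}\bigr)$. An argument of the same kind already appears in the proof of Lemma~\ref{lem:increase_family}.

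What your route buys is economy of hypotheses:
\begin{itemize}
\item General position of $\sL^{(2)}$ is never used. So in the cofinite alternative, the assumption $|\sC^{(1)}(\sL)|<\infty$ is superfluous for this lemma; only general position of $\sL\setminus\{L_i\}$ is needed.
\item $D(\sL)\ge 3$ is used only to make $\sC^{(2)}(\sL)$ the genuine set of contributors of $\sL^{(2)}$, and to keep $\dim\ch(\sL\setminus\{L_i\})>0$. It is not needed to compare relative interiors.
\item The same observation would also shorten Case 2 in the proof of Lemma~\ref{lem:single-layer-inclusion}.
\end{itemize}

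The paper's route yields the stronger geometric information that $L_j$ lies in the relative interior. That information is not needed for this statement.
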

\begin{proof}
Fix $i \in \sC(\sL)$. By Lemma~\ref{lemma:ch-is-ch-of-contrib}, it suffices to show that if $L_j$ is contributing to $\sL\setminus \{L_i\}$, then $j\in \sC^{(1)}(\sL)\cup\sC^{(2)}(\sL)$. We argue by contradiction. Suppose that $j\not\in \sC^{(1)}(\sL)\cup\sC^{(2)}(\sL)$. Since all sets from $\sL^{(2)}$ are in general position, hence their convex hull is compact, Lemma~\ref{lem:noncontrib-interior} implies that $L_j \subseteq {\rm relint}(\ch_{[2]}(\sL))$. Note that $(\bigcup_{i \in I \setminus  \sC^{(1)}(\sL) } L_i) \subseteq (\bigcup_{j \in I \setminus \{i\}} L_j)$. Consequently,
\begin{equation*}
\ch\left(\bigcup_{k \in I \setminus \sC^{(1)}(\sL)} L_k\right) = \ch_{[2]}\left(\sL\right) \subseteq \ch\left(\bigcup_{k \in I \setminus \{ i\}} L_k\right).
\end{equation*}
Since $D(\sL)\geq 3$ the dimensions coincide and thereupon $L_j \subseteq {\rm relint}(\ch(\bigcup_{k \in I \setminus \{ i\} } L_k))$. Hence it cannot be a contributing set to $\sL\setminus \{L_i\}$ by Lemma~\ref{lemma:cont-set-intersect-boundary}. This contradiction proves the lemma.
\end{proof}

The next lemma generalizes the above fact to the first $m$ layers. Essentially, its proof is an inductive application of Lemma~\ref{lemma:remove-1-point-from-ch}. The proof will be given in the Appendix.
\begin{lemma}\label{lem:deterministic-layer-reduction}
Let $\sL := \{ L_i, i \in I\}$ be a family of  sets from $\sK^d$ satisfying~\eqref{eq:union_is_compact}. Fix $0<m<|I|$ and let $J_m\subset I$ be a fixed subset of cardinality $m$, $|J_m|=m$.
Assume that either every subfamily of $\sL$ is in general position or $\sL$ is general position cofinitely and $|\sC^{(r)}(\sL)|<+\infty$ for $1\leq r\leq m+1$. Suppose that $D(\mathcal L)\ge m+2$. Then
    \begin{equation*}
        \ch\left(\bigcup_{j\in I\setminus J_m} L_j\right)
        =
        \ch\left(
        \bigcup_{j\in \left(\bigcup_{r=1}^{m+1} \sC^{(r)}(\sL)\right)\setminus J_m} L_j
        \right).
    \end{equation*}
    In other words, after deleting any $m$ sets, the resulting convex hull is completely
    determined by the first $m+1$ peeling layers of the original family.
    \end{lemma}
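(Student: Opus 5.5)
The plan is to avoid a literal induction on $m$ that iterates Lemma~\ref{lemma:remove-1-point-from-ch}. Instead, we argue directly by reproducing its proof with deeper layers. Put $S_m:=\bigcup_{r=1}^{m+1}\sC^{(r)}(\sL)$ and $\sL':=\{L_j:j\in I\setminus J_m\}$. The inclusion "$\supseteq$" is trivial, since the right-hand side is a convex hull of a subfamily of $\sL'$.

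For "$\subseteq$", first note that $\sL'$ is in general position and satisfies~\eqref{eq:union_is_compact}. This holds either by hypothesis (every subfamily) or by cofinite general position, since $J_m$ is finite. The positive dimension will follow from the dimension argument below. By Lemma~\ref{lemma:ch-is-ch-of-contrib} applied to $\sL'$, it therefore suffices to show that every set $L_j$ contributing to $\sL'$ has $j\in S_m$.

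Suppose instead that $L_j\in\fC(\sL')$ with $j\notin S_m$. Then $j\in I_{m+2}$, and $j$ is not contributing to $\sL^{(m+2)}$ (if $D(\sL)=m+2$, we use the convention $\sC^{(m+2)}=\varnothing$).

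\emph{Step 1.} Apply Lemma~\ref{lem:noncontrib-interior} to the family $\sL^{(m+2)}$. This is legitimate because $I\setminus I_{m+2}=S_m\cap$(the first $m+1$ layers), which is finite in the cofinite case by $|\sC^{(r)}(\sL)|<\infty$ for $r\le m+1$. Hence $\sL^{(m+2)}$ is in general position with compact hull. When $j$ is non-contributing, the lemma gives $L_j\subseteq{\rm relint}\,\ch(\sL^{(m+2)})$. If the hull at level $m+2$ is degenerate, or $D(\sL)=m+2$ with $\sC^{(m+2)}=\varnothing$, the case needs care: $D(\sL)\ge m+2$ guarantees $\dim\ch(\sL^{(m+1)})=d_1$, but not necessarily level $m+2$. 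I would handle this by working with level $m+1$ instead, as follows. Since $j\in I_{m+2}$, $L_j$ is non-contributing to $\sL^{(m+1)}$, so Lemma~\ref{lem:noncontrib-interior} gives $L_j\subseteq{\rm relint}\,\ch(\sL^{(m+1)})$, with $\dim\ch(\sL^{(m+1)})=d_1$.

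\emph{Step 2 (the key pigeonhole step).} Show that
$$
\ch(\sL^{(m+1)})\subseteq\ch(\sL')\quad\text{fails in general,}
$$
so a sharper choice of layer is needed. The $m$ deleted indices $J_m$ meet at most $m$ of the $m+1$ layers $\sC^{(1)},\dots,\sC^{(m+1)}$. Hence some layer index $r\le m+1$ has $J_m\cap\sC^{(r)}(\sL)=\varnothing$. More usefully, choose $r\le m+1$ with $J_m\cap I_{r}\setminus I_{r+1}=\varnothing$. Then I claim $L_j\subseteq{\rm relint}\,\ch(\sL^{(r)})$ and $\ch(\sL^{(r+1)})\cup\bigcup_{k\in\sC^{(r)}}L_k\subseteq\ch(\sL')$ fails only if $J_m$ hits $I_{r+1}$.

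To fix this, I would instead take the \emph{deepest} layer $s\le m+2$ such that $J_m\cap I_s=\varnothing$ after reordering, i.e.\ $s:=1+\max\{r: J_m\cap\sC^{(r)}\neq\varnothing\}$, which is at most $m+1$. Then $I_s\subseteq I\setminus J_m$, so $\ch(\sL^{(s)})\subseteq\ch(\sL')$. Moreover $j\in I_{m+2}\subseteq I_{s+1}$, so $L_j$ is non-contributing to $\sL^{(s)}$. By Lemma~\ref{lem:noncontrib-interior} (general position of $\sL^{(s)}$ via cofiniteness), $L_j\subseteq{\rm relint}\,\ch(\sL^{(s)})$.

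\emph{Step 3.} Since $s\le m+1<D(\sL)$, we have $\dim\ch(\sL^{(s)})=d_1=\dim\ch(\sL')$. Hence ${\rm relint}\,\ch(\sL^{(s)})\subseteq{\rm relint}\,\ch(\sL')$, and so $L_j$ misses ${\rm relbd}\,\ch(\sL')$. By Lemma~\ref{lemma:cont-set-intersect-boundary}, $L_j\notin\fC(\sL')$, which is the desired contradiction.

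The main obstacle is Step 2: choosing the right layer $s$ via pigeonhole, so that the surviving family contains a full layer $\sL^{(s)}$ which still has $L_j$ strictly inside it. Alongside this, the conditions of the general position lemmas (finiteness of removed indices and nondegenerate dimension) must be checked for each subfamily used.
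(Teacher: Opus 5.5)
\textbf{There is a gap in Step 2.} Your index $s:=1+\max\{r: J_m\cap\sC^{(r)}(\sL)\neq\varnothing\}$ is at most $m+1$ only when $J_m\subseteq\sC^{(1)}(\sL)\cup\dots\cup\sC^{(m)}(\sL)$. Nothing in the lemma forces this, because $J_m$ is an arbitrary $m$-subset of $I$. (Also, ``deepest'' is the wrong direction: the condition $J_m\cap I_s=\varnothing$ passes to all deeper $s$.)

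Here is a concrete failure. Take planar points forming three nested triangles $T_1\supset T_2\supset T_3$, with the vertices of $T_k$ forming $\sC^{(k)}(\sL)$, plus a point inside $T_3$. Let $m=1$, $J_1=\{i_0\}$ with $i_0$ a vertex of $T_2$, and let $j$ be a vertex of $T_3$.
\begin{itemize}
\item Your recipe gives $s=3=m+2$. But $j\in\sC^{(3)}(\sL)$, so $L_j$ contributes to $\sL^{(s)}$, Lemma~\ref{lem:noncontrib-interior} gives nothing, and your inclusion $I_{m+2}\subseteq I_{s+1}$ is false.
\item For any $s\le 2$ we have $i_0\in I_s$, so $\ch(\sL^{(s)})\subseteq\ch(\sL')$ no longer follows from containment of families, which is the only justification you give.
\item If $J_m$ contains an index lying in none of the layers used, the formula for $s$ does not even guarantee $J_m\cap I_s=\varnothing$.
\end{itemize}
As written, your argument therefore covers only the special case $J_m\subseteq\bigcup_{r\le m}\sC^{(r)}(\sL)$.

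\textbf{The missing idea.} You do not need $J_m\cap I_r=\varnothing$. The pigeonhole layer $r\le m+1$ with $J_m\cap\sC^{(r)}(\sL)=\varnothing$, which you found and then discarded, already suffices. By Lemma~\ref{lemma:ch-is-ch-of-contrib} applied to the general-position family $\sL^{(r)}$,
\[
\ch(\sL^{(r)})=\ch\bigl(\textstyle\bigcup_{i\in\sC^{(r)}(\sL)}L_i\bigr)\subseteq\ch(\sL'),
\]
regardless of which deeper sets were deleted. By nesting, $\ch(\sL^{(m+1)})\subseteq\ch(\sL')$. This is exactly Lemma~\ref{lem:recursive-peeling-contained-after-deletions}, so your claim in Step 2 that this inclusion ``fails in general'' is wrong.

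Combine it with your level-$(m+1)$ version of Step 1, which is correct: $j\notin\bigcup_{r\le m+1}\sC^{(r)}(\sL)$ forces $j\in I_{m+1}\setminus\sC^{(m+1)}(\sL)$, hence $L_j\subseteq{\rm relint}\,\ch(\sL^{(m+1)})$. Then your Step 3 dimension argument gives $L_j\subseteq{\rm relint}\,\ch(\sL')$, contradicting Lemma~\ref{lemma:cont-set-intersect-boundary}.

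\textbf{Comparison with the paper.} Repaired this way, your proof is genuinely different from and shorter than the paper's. It obtains the key inclusion $\sC^{(1)}(\sL')\subseteq\bigcup_{r\le m+1}\sC^{(r)}(\sL)$ in one stroke. The paper instead derives it as the case $r=1$ of Corollary~\ref{cor:multi-layer-inclusion}, which is an induction on $|J|$ built on the single-deletion Lemma~\ref{lem:single-layer-inclusion}. That lemma is itself an induction on the layer, using Lemmas~\ref{lemma:remove-1-point-from-ch} and~\ref{lem:increase_family}. The paper's route yields the multi-layer inclusion for all $r$ at once, while the direct argument only gives the $r=1$ case. That case is all the present lemma needs.
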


\begin{lemma}\label{lem:recursive-peeling-contained-after-deletions}
Let $\sL=\{L_i:i\in I\}$ be a family of sets from $\sK^d$ satisfying~\eqref{eq:union_is_compact}. Fix $m\in\mathbb N$ and suppose that $D(\sL)\ge m+2$. Assume further that either every subfamily of $\sL$ is in general position or $\sL$ is general position cofinitely and $|\sC^{(r)}(\sL)|<+\infty$ for $1\leq r\leq m+1$. Suppose also that $\sL^{(r)}$ satisfies~\eqref{eq:union_is_compact}, for each $1\leq r\leq  m+1$. Then, for every $J\subset I$ with $|J|=m$,
$$
\ch_{[m+1]}(\sL)\subseteq \ch\left(\bigcup_{i\in I\setminus J}L_i\right).
$$
In particular, ${\rm relint}\,\bigl(\ch_{[m+1]}(\sL)\bigr)\subseteq {\rm relint}\,\ch\left(\bigcup_{i\in I\setminus J}L_i\right)$.
\end{lemma}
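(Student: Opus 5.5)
The plan is to induct on $m$, deleting the members of $J$ one at a time. The engine is a \emph{layer-shift} claim: deleting a single set from the family costs at most one recursive peeling layer. Precisely, for $j\in I$ let $\sL':=\sL\setminus\{L_j\}$, indexed by $I':=I\setminus\{j\}$, with recursive index sets $I'_1=I'$, $I'_2,\dots$. I will show
$$
I_{r+1}\setminus\{j\}\subseteq I'_r ,\qquad 1\le r\le m ,
$$
and, when $j\in I_{r+1}$, also $\ch(\sL^{(r+1)})\subseteq\ch(\sL'^{(r)})$. Granting the hull inclusion in all cases, the lemma follows by induction on $m$. Write $J=\{j\}\cup J'$ with $|J'|=m-1$. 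The family $\sL'$ still satisfies the hypotheses with $m-1$ in place of $m$: cofinite general position passes to $\sL'$, the layers of $\sL'$ are finite, and $D(\sL')\ge m+1$ by the dimension comparison below. Hence
$$
\ch\Big(\bigcup_{i\in I\setminus J}L_i\Big)=\ch\Big(\bigcup_{i\in I'\setminus J'}L_i\Big)\supseteq\ch(\sL'^{(m)})\supseteq\ch(\sL^{(m+1)}).
$$
Under \eqref{eq:union_is_compact} for $\sL^{(m+1)}$, this hull is closed and equals $\ch_{[m+1]}(\sL)$. The relative interior statement then follows because all hulls involved have dimension $d_1$.

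For the base $r=1$, split according to whether $j$ contributes to $\sL$. If $j\in\sC^{(1)}(\sL)$, then $I_2\subseteq I'$ trivially, and $\ch(\sL^{(2)})\subseteq\ch(\sL')$ by monotonicity. If $j\notin\sC^{(1)}(\sL)$, then $\ch(\sL')=\ch(\sL)$, and Lemma~\ref{lem:increase_family} gives $\fC(\sL')=\fC(\sL)$. So the layers of $\sL'$ are those of $\sL$ with $j$ removed at the first stage. Iterating this exact-equality argument shows that the hull of layer $r$ can only change once $j$ becomes a contributing member of the current layer. At that stage we are back in the first case, with a shift by one.

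The inductive step for the claim is the same at every level. Take $i\in I_{r+2}$ and suppose, as the induction provides, that $\ch(\sL^{(r+1)})\subseteq\ch(\sL'^{(r)})$. Since $i$ does not contribute to $\sL^{(r+1)}$, Lemma~\ref{lem:noncontrib-interior} applied to $\sL^{(r+1)}$ gives $L_i\subseteq{\rm relint}\,\ch(\sL^{(r+1)})$. This uses that $\sL^{(r+1)}$ is in general position, which holds by cofinite general position since it is obtained by deleting the finite set $\sC^{(1)}\cup\dots\cup\sC^{(r)}$. Next I compare dimensions. Since $D(\sL)\ge m+2$, the hulls $\ch(\sL^{(r+1)})$ and $\ch(\sL)$ both have dimension $d_1$, and $\ch(\sL'^{(r)})$ is sandwiched between them, so all three have the same affine hull. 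Therefore
$$
L_i\subseteq{\rm relint}\,\ch(\sL^{(r+1)})\subseteq{\rm relint}\,\ch(\sL'^{(r)}),
$$
and Lemma~\ref{lemma:cont-set-intersect-boundary} shows that $L_i$ is not contributing to $\sL'^{(r)}$. Hence $i\in I'_{r+1}$. Monotonicity then gives $\ch(\sL^{(r+2)})\subseteq\ch(\sL'^{(r+1)})$, which propagates the hull inclusion to the next level.

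The main obstacle I expect is the bookkeeping in the case where $j$ lies in a deep layer, $j\in\sC^{(s)}(\sL)$ with $s>1$, or even $j\in I_{m+1}$. Up to level $s$ the layers of $\sL$ and $\sL'$ coincide apart from $j$; this comes from Lemma~\ref{lem:increase_family} applied layerwise, each time with $\ch$ unchanged. At level $s$ the shift begins, and the argument above takes over from there. If $j$ never contributes before stage $m+1$, then $L_j$ lies in the interior of the hulls in question, and no shift is needed at all. Checking that the hypotheses of Lemma~\ref{lem:increase_family} and Lemma~\ref{lem:noncontrib-interior} hold at each stage is where the assumptions that each $\sL^{(r)}$ satisfies \eqref{eq:union_is_compact}, that $|\sC^{(r)}(\sL)|<\infty$, and that $D(\sL)\ge m+2$ are all consumed.
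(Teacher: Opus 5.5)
Your argument is correct, but it is a much heavier route than the paper's. The paper proves this lemma by pigeonhole. Because $D(\sL)\ge m+2$, the layers $\sC^{(1)}(\sL),\dots,\sC^{(m+1)}(\sL)$ are non-empty and pairwise disjoint, so a set $J$ with $|J|=m$ misses one of them, say $\sC^{(r)}(\sL)$ with $r\le m+1$. Lemma~\ref{lemma:ch-is-ch-of-contrib} applied to $\sL^{(r)}$ then gives $\ch_{[r]}(\sL)=\ch\bigl(\bigcup_{i\in\sC^{(r)}(\sL)}L_i\bigr)\subseteq\ch\bigl(\bigcup_{i\in I\setminus J}L_i\bigr)$, and nestedness $\ch_{[m+1]}(\sL)\subseteq\ch_{[r]}(\sL)$ finishes. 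No reduced family is ever peeled, so nothing about $\sL\setminus\{L_j\}$ needs checking.

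Your layer-shift claim $I_{r+1}\setminus\{j\}\subseteq I'_r$ is equivalent to Lemma~\ref{lem:single-layer-inclusion} of the Appendix, and you prove it essentially the same way. Before $j$ contributes you use Lemma~\ref{lem:increase_family}; afterwards you use Lemma~\ref{lem:noncontrib-interior} and Lemma~\ref{lemma:cont-set-intersect-boundary} together with the dimension comparison. Your induction on $|J|$ mirrors Corollary~\ref{cor:multi-layer-inclusion}. This buys more than the statement asks for: it locates the contributing sets of the reduced family inside the first $m+1$ layers of $\sL$, which is what the paper needs for Lemma~\ref{lem:deterministic-layer-reduction}.

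The cost is bookkeeping that you only sketch. To apply the induction hypothesis to $\sL'$ at level $m-1$, you need:
\begin{enumerate}
\item non-termination of $\sL'$, which requires $\sC^{(r)}(\sL')\neq\varnothing$; this comes from Lemma~\ref{lemma:ch-is-ch-of-contrib}, not from the dimension comparison alone;
\item $|\sC^{(m)}(\sL')|<\infty$, which your claim for $r\le m$ does not give. You must run the shift step once more, at $r=m+1$. It goes through because $\sL^{(m+1)}$ is in general position, has dimension $d_1$, and $|\sC^{(m+1)}(\sL)|<\infty$.
\end{enumerate}
Also, the hull inclusion $\ch(\sL^{(r+2)})\subseteq\ch\bigl((\sL')^{(r+1)}\bigr)$ follows by monotonicity only when $j\notin I_{r+2}$. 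When $j\in I_{r+2}$ it has to come from your exact-equality argument, as you indicate at the end.
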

\begin{proof}
Since $D(\sL)\ge m+2$, the recursive peeling has not terminated at any of the steps $1,\ldots,m+1$. Hence the layers
$$
\sC^{(1)}(\sL),\ldots,\sC^{(m+1)}(\sL)
$$
are non-empty and pairwise disjoint. Since $|J|=m$, there exists $r\in\{1,\ldots,m+1\}$ such that
$$
J\cap \sC^{(r)}(\sL)=\varnothing .
$$
Indeed, otherwise the set $J$ would have to contain at least one index from each of the $m+1$ pairwise disjoint non-empty layers. By the definition of the recursive peeling, $\sL^{(r)}=\{L_i:i\in I_r\}$ is a subfamily of $\sL$. It satisfies~\eqref{eq:union_is_compact}, and it is in general position by assumption. Therefore Lemma~\ref{lemma:ch-is-ch-of-contrib}, applied to $\sL^{(r)}$, gives
$$
\ch_{[r]}(\sL)=\ch(\sL^{(r)})=\ch\left(\bigcup_{i\in \sC^{(r)}(\sL)}L_i\right).
$$
Since $\sC^{(r)}(\sL)\cap J=\varnothing$, the last set is contained in $\ch\left(\bigcup_{i\in I\setminus J}L_i\right)$. On the other hand, the recursive peelings are nested, and $r\le m+1$. Hence
$$
\ch_{[m+1]}(\sL)\subseteq \ch_{[r]}(\sL).
$$
Combining the two inclusions yields $\ch_{[m+1]}(\sL)\subseteq \ch\left(\bigcup_{i\in I\setminus J}L_i\right)$. The assertion for relative interiors follows immediately. This proves the lemma.
\end{proof}

\subsection{Wrappings for intersections of convex sets}
Both peeling operations introduced above for convex hulls of {\em unions} of convex sets
have natural dual counterparts, that we call wrappings, for {\em intersections} of convex 
sets. This duality is provided by polarity. Recall that, for a set $A\subset \mathbb R^d$, its polar is
$$
A^{o}:=\{y\in\mathbb R^d:\sup_{x\in A}\langle x,y\rangle\leq 1\}.
$$
On the class of closed convex sets containing the origin, polarity is an involution. Denote this class by $\mathfrak{K}^d_0$ and the subclass of its sets containing the origin in the interior by $\mathfrak{K}^d_{(0)}$. 
It is easy to see that $A^{o}\in \sK_{0}^d$ if and only if $A\in \mathfrak{K}^d_{(0)}$. That is, the image $A^{o}$ is compact if and only if the origin lies in the interior. In particular, the restriction of polarity on $\sK_{(0)}^d$ is a bijection. 

The map $A\mapsto A^{o}$ satisfies
$$
\left(\bigcap_{i\in I}L_i\right)^{o}
=
\cch
\left(
\bigcup_{i\in I}L_i^{o}
\right),
\quad L_i\in\fK_{(0)}^d.
$$
Thus polar duality turns intersections into closed convex hulls of unions. Since
polarity reverses inclusions, a peeling operation on the dual side corresponds
to an enlargement of the original intersection. We call this enlargement a
{\em wrapping}.

\begin{definition}\label{m-point-wrapping-def}
Let $\mathcal L=\{L_i:i\in I\}\subseteq\mathfrak{K}^d_{(0)}$, and let $m$ be an
integer such that $0\leq m<|I|$. The $m$-point wrapping of the family
$\mathcal L$, or of the represented intersection $\bigcap_{i\in I}L_i$, is
defined by
$$
\wrap^{[m]}(\mathcal L)
:=
\cch
\left(
\bigcup_{\substack{J\subset I\\ |J|=m}}
\bigcap_{i\in I\setminus J}L_i
\right).
$$
Equivalently, under polarity,
$$
\wrap^{[m]}(\mathcal L)
=
\left(
\bigcap_{\substack{J\subset I\\ |J|=m}}
\cch
\left(
\bigcup_{i\in I\setminus J}L_i^{o}
\right)
\right)^{o}.
$$
\end{definition}
Thus, whereas the $m$-point peeling consists of the points that remain in the
closed convex hull after every deletion of $m$ members of the family, the $m$-point
wrapping is the smallest closed convex set containing all points which may enter
the intersection after some deletion of $m$ members of the family.

To introduce the $m$-th recursive wrapping, observe that $L_j^{o}$ is contributing for the {\em closed} convex hull generated by the polar family $\sL^{o}:=\{L_i^{o}:i\in I\}$ if and only if
$$
L_j^{o} \not\subseteq \cch\left(\bigcup_{i\in I\setminus\{j\}}L_i^{o}\right).
$$
By polarity, this is equivalent to
$$
\bigcap_{i\in I\setminus\{j\}}L_i \not\subseteq L_j.
$$
Thus, $L_j^{o}$ is contributing on the dual side precisely when deleting $L_j$ strictly enlarges the original intersection $ \bigcap_{i\in I}L_i$.

\begin{definition}\label{m-recursive-wrapping-def}

Let $\sL=\{L_i:i\in I\}\subseteq \mathfrak{K}^d_{(0)}$, and suppose that the polar family
$$
\sL^{o}:=\{L_i^{o}:i\in I\}
$$
satisfies the assumptions of Definition~\ref{convex-hull-peeling}. Let $D(\sL^{o})$ be the depth of the recursive convex hull peeling of $\sL^{o}$. For $m<D(\sL^{o})$, let $I_m^{o}$ be the index set obtained at the $m$-th step of Definition~\ref{convex-hull-peeling} applied to the family $\sL^{o}$. 
The $m$-th recursive wrapping of $\sL$, or of the represented intersection $\bigcap_{i\in I}L_i$, is defined by
\begin{equation}\label{eq:wrapping_duality}
\wrap_{[m]}(\sL):=\left(\ch_{[m]}(\sL^{o})\right)^{o} .
\end{equation}
Equivalently,
$$
\wrap_{[m]}(\sL)=\bigcap_{i\in I_m^{o}}L_i .
$$
Here and in other similar places we use the convention that intersections over empty index sets coincide with $\mathbb R^d$.
\end{definition}

\begin{remark}
The distinction between $\cch$ and $\ch$ in the definitions of this section becomes irrelevant whenever the index set $I$ is finite. This is the case we are mostly interested in throughout the paper, and in particular in the next section devoted to $K$-convexity.
\end{remark}
        
\section{Applications to $K$-convexity}
\label{main-results}
We shall now apply the above notions to the $K$-hulls of random samples as studied in~\cite{mm22}. Let $K$ be a fixed convex body from $\sK^d_{(0)}$. Recall from \cite{mm22} that, for a set $A\subset\R^d$, its $K$-hull is
\begin{equation*}
    \ch_K(A):=\bigcap_{x\in\R^d:\,A\subset K+x}(K+x).
\end{equation*}
Note that if $K$ is the Euclidean unit ball, then $\ch_K(A)$ is the ball hull of $A$. Let $\Xi_n=\{\xi_1,\dots,\xi_n\}$ be a random sample of independent points uniformly distributed in $K$, and define
\begin{equation*}
    X_n:=\bigcap_{i=1}^n (K-\xi_i)=K\ominus \Xi_n=K\ominus \ch_K(\Xi_n),
\end{equation*}
where $A \ominus B:= \{ x \in \R^d : x + B \subseteq A \}$ is the Minkowski difference. Here the second equality is a consequence of Proposition 2.2 in~\cite{mm22}. Thus, the set $X_n$ measures the difference between the `mother body' $K$ and the $K$-hull of a random sample from $K$. It is clear that as $n\to\infty$, $X_n$ converges to $\{0\}$ almost surely. In~\cite{mm22} it was proved that the rescaled set $nX_n$ converges in distribution to a limiting random convex body, as $n\to\infty$. The limiting body is defined through a Poisson hyperplane tessellation. Let $S^{d-1}$ denote the unit sphere in $\R^d$. Let $\sP_K=\{(t_i,u_i),\ i\ge 1\}$ be a Poisson process on $(0,\infty)\times S^{d-1}$ with intensity measure equal to the product of a constant $V_d(K)^{-1}$, Lebesgue measure on $(0,\infty)$, and the surface area measure $S_{d-1}(K,\cdot)$ of the body $K$. The associated half-spaces $H^-_{u_i}(t_i)$ generate a Poisson tessellation of $\R^d$, and its zero cell is
\begin{equation*}
    Z:=\bigcap_{(t_i,u_i)\in \sP_K} H^-_{u_i}(t_i), 
\end{equation*}
where $H_u^-(t) := \{ x \in \R^d: \langle x, u \rangle \leq t \}$. Theorem 5.1 in~\cite{mm22} establishes the convergence
\begin{equation}\label{eq:mm22_claim1}
n X_n~\dodn~Z,\quad n\to\infty
\end{equation}
on $\mathcal{K}^d_{(0)}$ endowed with the Hausdorff topology. The continuity of the polarity, see Lemma 7.2(iv) in~\cite{kabluchko2025generalised}, yields
\begin{equation}\label{eq:mm22_claim2}
n^{-1} X_n^{o}~\dodn~Z^{o},\quad n\to\infty.
\end{equation}
Observe that $Z^{o}=\ch(\Pi_K)$, where $\Pi_K$ is the Poisson point process on $\mathbb{R}^d$ obtained as the image of $\sP_K$ under $(t,u)\mapsto t^{-1}u$. With probability one both $Z$ and $Z^{o}$ are polytopes and contain the origin in the interior. In particular, one can also write $Z^{o}=\ch(\cup_{x\in \Pi_K}[0,x])$.

Our aim is to apply the peelings and wrappings procedures introduced above to the set $X_n$ and derive distributional limit theorems for these derived sets. 

In theorems below we also assume that $K$ is strictly convex and regular. The latter means that the normal cone $N(K,x)$ is one-dimensional for all $x\in\partial K$.

\begin{theorem}[The $m$-th recursive peeling and wrapping]
\label{thm:recursive-peeling-convergence-random}
Assume that the convex body $K$ is strictly convex and regular. For every $m\in\NN$, the random vector of convex bodies $\left(n^{-1}(X^o_n)_{[1]},\dots,n^{-1}(X^o_n)_{[m]}\right)$
converges in distribution, as $n\to\infty$, on the space $(\sK^d_{(0)})^m$ endowed with the product topology to
$\left(Z^o_{[1]},\dots,Z^o_{[m]}\right)$. Moreover, $\left(\wrap_{[1]}(n X_n),\ldots,\wrap_{[m]}(n X_n)\right)$ converges in distribution to $(\wrap_{[1]}(Z),\ldots,\wrap_{[m]}(Z))$, as $n\to\infty$.
\end{theorem}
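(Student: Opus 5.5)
The proof will follow the continuous mapping scheme announced in the introduction. The rescaled polar body is
$$
n^{-1}X_n^{o}=\ch\Bigl(\bigcup_{i=1}^n n^{-1}(K-\xi_i)^{o}\Bigr).
$$
It is the convex hull of a finite family of compact convex sets containing the origin. We encode this family by the point measure
$$
\mu_n:=\sum_{i=1}^n\delta_{n^{-1}(K-\xi_i)^{o}}
$$
on $\sK^d_0\setminus\{\{0\}\}$. The limiting object is the Poisson measure $\mu:=\sum_{x\in\Pi_K}\delta_{[0,x]}$.

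\textbf{Step 1: vague convergence of the measures.} We show that $\mu_n\dodn\mu$ in the vague topology on $\sK^d_0\setminus\{\{0\}\}$ described in the introduction. Since $\xi_i$ is uniform in $K$, the set $(K-\xi_i)^{o}$ is large only when $\xi_i$ is close to $\partial K$. For a point $\xi$ near the boundary point $x\in\partial K$ with outer unit normal $u$, the polar $(K-\xi)^{o}$ is close to a long thin set in direction $u$. Regularity of $K$ makes this normal unique, and strict convexity makes the polar body degenerate, after rescaling by $n^{-1}$, to a segment $[0,t^{-1}u]$. Here $t$ is $n$ times the distance to the supporting hyperplane. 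This should give convergence of the intensity measures to that of $\mu$, and hence, by the standard binomial-to-Poisson argument, the claimed vague convergence. This is essentially the content behind~\eqref{eq:mm22_claim2} in~\cite{mm22}, and I would extract it from there. Passing to a Skorokhod representation, we may assume $\mu_n\to\mu$ almost surely.

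\textbf{Step 2: the deterministic hypotheses hold almost surely.} We check that $\mu$ satisfies the assumptions of the deterministic continuity theorem for recursive peeling (proved in Section~\ref{sec:proof}).
\begin{enumerate}
\item The family $\{[0,x]:x\in\Pi_K\}$ is in general position cofinitely. Removing finitely many segments leaves a Poisson process with the same absolutely continuous directional intensity. Such a process has no $m+2$ endpoints on a common $m$-flat supporting its hull, and Proposition~\ref{prop:strictly-convex-general-pos} applies to the nondegenerate segments after the origin is handled. Note that $0\in\Int$ of every relevant hull.
\item Every layer $\sC^{(r)}$ is finite.
\item $D=\infty$ almost surely.
\item Every $\sL^{(r)}$ satisfies~\eqref{eq:union_is_compact}.
\end{enumerate}
Items 2 to 4 follow because only finitely many atoms lie outside any neighbourhood of $\{0\}$, while infinitely many atoms in every direction remain after any finite deletion. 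Hence each hull has the origin in its interior and is a polytope. The prelimit families are finite with probability one, and they are in general position since the $\xi_i$ have a density. Strict convexity of $K$ enters here through Proposition~\ref{prop:strictly-convex-general-pos}.

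\textbf{Step 3: continuous mapping and polarity.} The deterministic theorem gives that the contributing index sets of the first $m$ layers eventually stabilize. It also gives that $\ch_{[r]}$ of the family of $\mu_n$ converges to $\ch_{[r]}$ of the family of $\mu$ in the Hausdorff metric, for $r\le m$. This yields the joint convergence of $\bigl(n^{-1}(X_n^{o})_{[r]}\bigr)_{r\le m}$ to $(Z^{o}_{[r]})_{r\le m}$ in $(\sK^d_{(0)})^m$; the limits contain $0$ in the interior by Step 2. For the wrappings, \eqref{eq:wrapping_duality} gives $\wrap_{[r]}(nX_n)=\bigl(\ch_{[r]}((nX_n)^{o})\bigr)^{o}$, where the underlying family is $\{n(K-\xi_i)\}$. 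Polarity is continuous on $\sK^d_{(0)}$ (Lemma~7.2(iv) in~\cite{kabluchko2025generalised}), so the continuous mapping theorem transfers the joint convergence to $(\wrap_{[r]}(Z))_{r\le m}$.

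\textbf{Main obstacle.} I expect Step 1 to be the main difficulty, specifically showing that the rescaled polar bodies themselves converge vaguely and not merely their hulls. Near $\partial K$ the sets $n^{-1}(K-\xi)^{o}$ are not exactly segments, and one must control their Hausdorff distance to $[0,t^{-1}u]$ uniformly on compacts away from $\{0\}$. This is where both regularity and strict convexity are used. I would try a local parametrization of $\partial K$ by the normal map, which is a homeomorphism under these two assumptions, and compute the support function of $(K-\xi)^{o}$ via the gauge of $K-\xi$.
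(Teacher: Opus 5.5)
Your proposal is correct and follows the paper's proof. Step~1 is exactly Theorem~5.6 of \cite{mm22}, which the paper cites directly before passing to a Skorokhod representation, and Steps~2--3 match the paper: it checks that $\{[0,x]:x\in\Pi_K\}$ is in general position cofinitely with $0\in\Int(\ch_{[r]})$ for every $r$, applies Theorem~\ref{thm:eventual-equality-layers} componentwise, and uses continuity of polarity for the wrappings. Two small remarks: general position of the prelimit families is not needed here, since Theorem~\ref{thm:eventual-equality-layers} imposes nothing on $\mu_n$ beyond vague convergence; and segments are not strictly convex, so cofinite general position of the limit should be derived, as in the paper, from general position of the points of $\Pi_K$ rather than from Proposition~\ref{prop:strictly-convex-general-pos}.
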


\begin{theorem}[The $m$-point peeling and wrapping]
\label{thm:m-point-peeling-convergence-random}
Assume that the convex body $K$ is strictly convex and regular. Then, for every $m\in\NN$, the random vector of convex bodies
$\left(n^{-1}(X_n^o)^{[1]},\dots,n^{-1}(X_n^o)^{[m]}\right)$ converges in distribution, as $n\to\infty$, on the space $(\sK^d_{(0)})^m$ endowed with the product topology to
$\left((Z^o)^{[1]},\dots,(Z^o)^{[m]}\right)$. Moreover, $\left(\wrap^{[1]}(n X_n),\ldots,\wrap^{[m]}(n X_n)\right)$ converges in distribution to $(\wrap^{[1]}(Z),\ldots,\wrap^{[m]}(Z))$, as $n\to\infty$.
\end{theorem}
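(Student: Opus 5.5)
We prove Theorem~\ref{thm:m-point-peeling-convergence-random} by the continuous mapping theorem, using the deterministic continuity result for $m$-point peelings announced in the introduction. The plan has four steps.

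\textbf{Step 1 (encoding as point measures).} Write $n^{-1}X_n^{o}=\ch\bigl(\bigcup_{i=1}^n n^{-1}(K-\xi_i)^{o}\bigr)$. We encode the family $\{n^{-1}(K-\xi_i)^{o}: i\le n\}$ as a point measure
$$
\mu_n:=\sum_{i=1}^n \delta_{n^{-1}(K-\xi_i)^{o}}
$$
on $\sK^d_0\setminus\{\{0\}\}$. The limiting Poisson family of segments is encoded as $\mu:=\sum_{x\in\Pi_K}\delta_{[0,x]}$.

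\textbf{Step 2 (vague convergence).} We show that $\mu_n\dodn\mu$ in the vague topology, where atoms may accumulate only at $\{0\}$. A point $\xi$ close to $\partial K$ makes $(K-\xi)^{o}$ large in the direction of the outer normal $u$ at the nearby boundary point. Regularity of $K$ makes this normal unique, and strict convexity makes the polar, after rescaling by $n^{-1}$, close in the Hausdorff metric to a segment $[0,t^{-1}u]$. The intensity of such sets is governed by $S_{d-1}(K,\cdot)/V_d(K)$. Points far from the boundary give polars of size $O(n^{-1})$, which vanish near $\{0\}$. This is essentially the binomial-to-Poisson argument behind~\eqref{eq:mm22_claim1} in~\cite{mm22}, lifted from hyperplanes to the set-valued atoms. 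We verify it through Laplace functionals, or through convergence of intensities on sets bounded away from $\{0\}$ together with the independence of the sample.

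\textbf{Step 3 (almost sure regularity of the limit).} We check that, almost surely, $\mu$ satisfies the hypotheses of the deterministic continuity theorem. First, $Z^{o}$ is a polytope with the origin in its interior. Second, the family of segments is in general position cofinitely: a.s. no $k+2$ points of $\Pi_K$ together with the origin lie in a common $k$-flat, and $\Pi_K$ has only finitely many points outside any neighbourhood of the origin. Third, after deleting any finite set of atoms, the hull $\cch$ remains compact and still contains $0$ in its interior. This holds a.s. because the directions of $\Pi_K$ are dense in the support of $S_{d-1}(K,\cdot)$, which is all of $S^{d-1}$. Finally, the contributing layers are finite. Because the relevant hulls contain a neighbourhood of the origin, only finitely many atoms exceed it, so each $\ch^{[k]}$ is determined by finitely many atoms. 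This matches Lemma~\ref{lem:deterministic-layer-reduction} and the inclusion in Lemma~\ref{lem:recursive-peeling-contained-after-deletions}.

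\textbf{Step 4 (continuous mapping and polarity).} The deterministic theorem gives continuity of $\nu\mapsto(\ch^{[1]}(\nu),\dots,\ch^{[m]}(\nu))$ at $\mu$, with values in $(\sK^d_{(0)})^m$ under the product Hausdorff topology. Combined with Step 2, the continuous mapping theorem yields the first claim. For wrappings, Definition~\ref{m-point-wrapping-def} gives $\wrap^{[k]}(nX_n)=\bigl(n^{-1}(X_n^{o})^{[k]}\bigr)^{o}$, using $(n(K-\xi_i))^{o}=n^{-1}(K-\xi_i)^{o}$. Likewise $\wrap^{[k]}(Z)=((Z^{o})^{[k]})^{o}$. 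Polarity is continuous on $\sK^d_{(0)}$ (Lemma~7.2(iv) of~\cite{kabluchko2025generalised}), so a second application of the continuous mapping theorem finishes the proof.

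\textbf{Main obstacle.} The hardest step is Step 2, specifically the uniform Hausdorff approximation of $n^{-1}(K-\xi)^{o}$ by a segment. This is where strict convexity and regularity enter. Without them the polars could be thick rather than degenerating to segments, and general position of the limit could fail. To get this approximation, we parametrize $\xi=x-sv$ with $x\in\partial K$ and $s\to 0$, and use the support-function formula for the polar, $h_{(K-\xi)^{o}}$, to control the radial function in directions away from $u$.
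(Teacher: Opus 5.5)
Your plan follows the paper's route: encode the polars as a point measure, use its vague convergence to the Poisson segment process, check the limit-side hypotheses, apply the deterministic continuity of the $m$-point peeling, and pass to wrappings by continuity of polarity. However, Step~4 has a gap.

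The deterministic result the paper proves, Theorem~\ref{thm:m-point-peeling-convergence-determ}, is not continuity at $\mu$ along arbitrary vaguely convergent sequences. In addition to the limit-side conditions you check in Step~3 (cofinite general position, and $0\in\Int\ch(\sA\setminus F)$ for finite $F$), it assumes that all atoms of the approximating measures $\mu_n$ are strictly convex and regular. The proof uses this, via Lemma~6.4 of~\cite{mm22}, to put the pre-limit families $\sA^{(n)}$ in cofinite general position. That is what allows $\ch^{[m]}(\sA^{(n)})$ to be reduced to an intersection over index sets from the first $m$ contributing layers of $\sA^{(n)}$.

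You never verify this condition, and it is exactly where the hypotheses on $K$ are used. Each $K-\xi_i$ is strictly convex and regular, and polarity interchanges these two properties, so every atom $n^{-1}(K-\xi_i)^{o}$ is strictly convex and regular. Because continuity is only available along such sequences, the plain continuous mapping theorem does not apply as stated. Either use the Skorokhod representation, as the paper does (realise $\mu_n\to\mu$ almost surely and apply the deterministic theorem pathwise), or use an extended continuous mapping theorem on the restricted domain.

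Alternatively, you could prove genuine continuity at $\mu$ yourself. Choose $\rho>0$ with $B_{3\rho}(0)\subseteq\ch_{[m+1]}(\sA)$. For large $n$, every hull obtained by deleting $m$ atoms of $\mu_n$ is then generated by the finitely many matched atoms meeting $B_\rho^c(0)$. Continuity of finite intersections of bodies containing a common ball finishes the argument. But this is an argument you would have to supply; it cannot be quoted from the paper.

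Step~2, which you flag as the main obstacle, is not proved in the paper at all. The vague convergence $\mu_n\dodn\mu$ is Theorem~5.6 of~\cite{mm22} and is simply cited. Your account of where the hypotheses act there is also inaccurate. The segment approximation of $n^{-1}(K-\xi)^{o}=(n(K-\xi))^{o}$ comes from $n(K-\xi)$ being locally close to the tangent half-space at the nearby boundary point, which depends on uniqueness of normals (regularity), not on strict convexity. In the paper's argument, strict convexity of $K$ matters because it makes the polar atoms regular, i.e.\ for the pre-limit hypothesis discussed above.
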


\section{Proofs}\label{sec:proof}
Theorems~\ref{thm:m-point-peeling-convergence-random} and~\ref{thm:recursive-peeling-convergence-random} will be derived from~\eqref{eq:mm22_claim1} and~\eqref{eq:mm22_claim2} by the continuous mapping approach, using
deterministic results which establish continuity of the mappings
$$
   \sL \mapsto \ch^{[m]}(\sL)
   \qquad\text{and}\qquad
   \sL \mapsto \ch_{[m]}(\sL)
$$
on a suitable space of point measures. Let
$$
   \sK^d_0\setminus\{\{0\}\},
$$
where $\sK^d_0$ is equipped with the Hausdorff metric $d_H$. For $A\in\sK^d_0\setminus\{\{0\}\}$, put
$$
   \rho(A):=d_H(A,\{0\})=\sup_{x\in A}\|x\|.
$$
Thus, $\rho(A)$ measures how far the compact convex set $A$, which contains the origin, reaches away from the origin. For $r>0$, define
$$
   \mathcal{B}_r
   :=
   \{A\in\sK^d_0\setminus\{\{0\}\}:\rho(A)\ge r\}
   =
   \{A\in\sK^d_0\setminus\{\{0\}\}:A\cap B_r^c(0)\neq\varnothing\}.
$$
We shall use the Hausdorff topology on $\sK^d_0\setminus\{\{0\}\}$, but the point
measures considered below are not required to be locally finite with
respect to all Hausdorff-compact subsets of $\sK^d_0\setminus\{\{0\}\}$. Instead, we
use the bornology generated by the sets $\mathcal{B}_r$, $r>0$. A Borel
measure $\mu$ on $\sK^d_0\setminus\{\{0\}\}$ is called locally finite if
$\mu(\mathcal{B}_r)<\infty$, $r>0$. Equivalently, $\mu$ has only finitely many atoms whose Hausdorff
distance from $\{0\}$ is at least $r$, for every $r>0$.

A sequence of locally finite Borel measures $(\mu_n)_{n\geq 1}$ on
$\sK^d_0\setminus\{\{0\}\}$ is said to converge vaguely or, more precisely, $\mathcal{B}$-vaguely, to a
locally finite measure $\mu$, written
$$
   \mu_n~\overset{{\rm v}}{\to}~\mu,\quad n\to\infty,
$$
if $\int f(A)\,\mu_n({\rm d}A)\to\int f(A)\,\mu({\rm d}A)$, for every bounded Hausdorff-continuous function $f:\sK^d_0\setminus\{\{0\}\}\to\mathbb{R}$ which vanishes in a Hausdorff neighbourhood of $\{0\}$. In this sense the deleted singleton $\{0\}$ is treated as
the point at infinity: atoms may accumulate only at $\{0\}$, and vague
convergence treats only those atoms which remain a positive Hausdorff
distance away from $\{0\}$.

\subsection{Continuity of peelings and wrappings with respect to the vague convergence}
For each $n\in\mathbb{N}$, let $\sA^{(n)} : =\{A_i^{(n)} \in \sK^d_0 \setminus \{0 \}, i \in I_n\}$  be a collection of compact convex sets. Suppose that, for each $n\in\mathbb{N}$, the point measure
$$
\mu_n:=\sum_{i\in I_n}\delta_{A_i^{(n)}}
$$
is locally finite and
\begin{equation}\label{thm:m-point-peeling-convergence-determ-a1}
\mu_n~\overset{{\rm v}}{\to}~\mu,\quad n\to\infty,
\end{equation}
for a locally finite point measure 
$$
\mu:=\sum_{i\in I}\delta_{A_i}.
$$
In what follows, we shall use the fact that any family $\sA$ in general position cannot contain duplicates. Consequently, in this case the corresponding point measure $\mu$ is automatically simple.

The next theorem establishes the continuity of the $m$-th recursive peeling operation.

\begin{theorem}\label{thm:eventual-equality-layers}
Fix $m\in\mathbb{N}$ and assume that~\eqref{thm:m-point-peeling-convergence-determ-a1} holds true. Suppose further that the family $\sA$ is in general position cofinitely and $0\in \Int(\ch_{[m]}(\sA))$. Then, there exists $N\in\NN$ such that for all $n\ge N$,
\begin{equation*}
    \sC^{(r)}(\sA^{(n)})=\sC^{(r)}(\sA),\qquad r=1,\dots,m-1.
\end{equation*}
and 
\begin{equation*}
    \ch_{[m]}(\sA^{(n)}) \overset{H}{\to} \ch_{[m]}(\sA),\quad n\to\infty.
\end{equation*}
\end{theorem}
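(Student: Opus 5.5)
The plan is an induction on the layer index $r$. First I extract from vague convergence a matching of atoms. Since $0\in\Int(\ch_{[m]}(\sA))$, we may fix $\varepsilon>0$ with $B_{2\varepsilon}(0)\subseteq \ch_{[m]}(\sA)$. Choose $r_0<\varepsilon$ such that $\mu$ has no atom on the boundary of $\sB_{r_0}$, i.e. no $A_i$ with $\rho(A_i)=r_0$. This is possible because $\mu(\sB_r)<\infty$ for all $r>0$, so only countably many radii carry atoms. Then $\mu(\sB_{r_0})<\infty$, and standard vague-convergence arguments (the portmanteau theorem for point measures) give the following: for large $n$ there is a bijection between the atoms of $\mu_n$ in $\sB_{r_0}$ and the finitely many atoms $A_i$, $i\in I'$, of $\mu$ in $\sB_{r_0}$, with $A^{(n)}_{\sigma_n(i)}\overset{H}{\to}A_i$. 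All remaining atoms of $\mu_n$ and $\mu$ lie in $B_{r_0}(0)\subseteq B_\varepsilon(0)$. I identify indices through $\sigma_n$.

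The second step localizes the layers. Every set of either family not indexed by $I'$ sits inside $B_\varepsilon(0)$. For the limit family, since $B_{2\varepsilon}(0)\subseteq\ch_{[r]}(\sA)$ for $r\le m$, such a set lies in $\Int\ch(\sA^{(r)})$ and cannot contribute, by Lemma~\ref{lemma:cont-set-intersect-boundary}. Hence $\sC^{(r)}(\sA)\subseteq I'$ is finite for $r\le m$, and by Lemma~\ref{lemma:ch-is-ch-of-contrib} (using cofinite general position) we get $\ch_{[r]}(\sA)=\ch(\bigcup_{i\in \sC^{(r)}(\sA)}A_i)$.

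The induction proceeds as follows. Suppose $\sC^{(s)}(\sA^{(n)})=\sC^{(s)}(\sA)$ for $s<r$ and large $n$, with $r\le m$. Then $\sA^{(n),(r)}$ and $\sA^{(r)}$ are obtained by deleting the same finite index set. The large members $\{A^{(n)}_i:i\in I'\cap I_r\}$ converge to $\{A_i: i\in I'\cap I_r\}$. Write $C=\ch_{[r]}(\sA)=\ch(\bigcup_{i\in I'\cap I_r}A_i)\supseteq B_{2\varepsilon}(0)$. The convex hull of finitely many Hausdorff-convergent compact sets converges, so $C_n:=\ch(\bigcup_{i\in I'\cap I_r}A_i^{(n)})\overset{H}{\to}C$. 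Consequently $C_n\supseteq B_{3\varepsilon/2}(0)$ for large $n$, so the small sets do not affect the hull: $\ch(\sA^{(n),(r)})=C_n$. This already yields the Hausdorff convergence claim at $r=m$.

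It remains to show that the contributing indices coincide for $r\le m-1$. This is the main obstacle, since contribution is not obviously stable under perturbation. The argument splits into two cases.

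For $i\in\sC^{(r)}(\sA)$, the hull without $A_i$ is a compact set $C^{-i}$. By Lemma~\ref{lemma:ch-is-ch-of-contrib} and general position of the subfamily, $C^{-i}$ is generated by finitely many large sets, and $C^{-i}\ne C$. Some point of $A_i$ lies at positive distance $\delta$ from $C^{-i}$. By Hausdorff continuity, for large $n$ a point of $A_i^{(n)}$ lies at distance at least $\delta/2$ from $C_n^{-i}$, so $i\in\sC^{(r)}(\sA^{(n)})$.

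For $i\in I'\cap I_r$ non-contributing, Lemma~\ref{lem:noncontrib-interior} gives $A_i\subseteq\Int C^{-i}$ with a positive margin, since $A_i$ is compact. Because $C^{-i}_n\overset{H}{\to}C^{-i}$, which is full-dimensional, the inclusion $A_i^{(n)}\subseteq C_n^{-i}$ persists for large $n$, so $i$ does not contribute.

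Finitely many indices are involved at each step, so a single $N$ works for all $r\le m-1$. Full-dimensionality, needed for the interior margin argument and ensuring the recursion does not terminate, comes from $0\in\Int\ch_{[m]}(\sA)$.
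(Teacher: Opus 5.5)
Your proposal is correct and follows essentially the paper's route. You localize all contributors of the first $m$ layers to the finitely many atoms meeting $B_{r_0}^c(0)$ and obtain Hausdorff convergence of the resulting finite hulls. You then stabilize each layer by the same two-case argument (a point of $A_i$ at positive distance from $C^{-i}$ for contributing $i$, an interior margin from Lemma~\ref{lem:noncontrib-interior} otherwise) and induct over $r$; your single global truncation radius merely replaces the paper's one-step stability assertion built on Lemmas~\ref{lemma:converg-to-contrib-ch}--\ref{lemma:single-removed-conv}. The one step you should make explicit is why, for $i\in\sC^{(r)}(\sA)$ with $r\le m-1$, the set $C^{-i}$ is generated by large sets and $C_n^{-i}\overset{H}{\to}C^{-i}$. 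Since $I_{r+1}\subseteq I_r\setminus\{i\}$, one has $C^{-i}\supseteq\ch_{[r+1]}(\sA)\supseteq\ch_{[m]}(\sA)\supseteq B_{2\varepsilon}(0)$, and this is exactly where the hypothesis at level $m$ enters, mirroring the condition $0\in\Int(\ch_{[2]}(\sB))$ in the paper.
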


The next theorem establishes continuity of the $m$-point peeling. Here we shall also assume strict convexity.    

\begin{theorem}\label{thm:m-point-peeling-convergence-determ}
Fix $m\in\mathbb{N}$ and assume that~\eqref{thm:m-point-peeling-convergence-determ-a1} holds true. Suppose further that
    \begin{enumerate}
        \item all atoms of $\mu_n$ are strictly convex and regular, for all sufficiently large $n\in\mathbb{N}$;
         \label{thm:m-point-peeling-convergence-determ-a2}
        \item the family $\sA$ is in general position cofinitely;
          \label{thm:m-point-peeling-convergence-determ-a3}
        \item $0\in \Int\ch(\sA\setminus F)$, for every finite set $F$.
         \label{thm:m-point-peeling-convergence-determ-a4}
    \end{enumerate}
Then $\ch^{[m]}(\sA^{(n)}) \overset{H}{\to} \ch^{[m]}(\sA)$, as $n\to\infty$.
\end{theorem}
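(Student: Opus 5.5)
The plan is to reduce the $m$-point peeling of $\sA$ and of $\sA^{(n)}$ to operations on a fixed finite number of "large" sets, where vague convergence gives atom-wise Hausdorff convergence, and then to prove continuity of the finite-dimensional map.

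\emph{Step 1: a finite window.} By hypothesis~\eqref{thm:m-point-peeling-convergence-determ-a4} with $F=\varnothing$, $0\in\Int\ch(\sA)$. Since $\sA$ is in general position cofinitely, every $\ch(\sA\setminus F)$ is compact and Lemma~\ref{lemma:ch-is-ch-of-contrib} applies to it. Moreover, by~\eqref{thm:m-point-peeling-convergence-determ-a4} all these hulls contain a common ball $B_{2\epsilon}(0)$ as $F$ ranges over subsets of a fixed finite set. Indeed, only finitely many $F$ with $|F|\le m$ matter, which we justify below. Choose $r\in(0,\epsilon)$ with $\mu(\partial\sB_r)=0$ (possible since atoms are countable). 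Then $\mu(\sB_r)=k<\infty$, and vague convergence yields, for $n$ large, $\mu_n(\sB_r)=k$. After labelling, the atoms satisfy $A^{(n)}_i\overset{H}{\to}A_i$ for $i=1,\dots,k$, and all other atoms of $\mu_n$ lie in $B_r(0)$.

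The key reduction is that any set contained in $B_r(0)\subset B_{2\epsilon}(0)$ is irrelevant to every hull $\cch(\bigcup_{i\notin J}A_i)$ with $|J|=m$, provided that hull contains $B_{2\epsilon}(0)$. We therefore need this for the limit, and an analogous statement for $\sA^{(n)}$. For the limit, I would show that every contributing set of each $\sA\setminus J$ with $|J|=m$ lies among the big atoms. This uses Lemma~\ref{lemma:cont-set-intersect-boundary}: a contributing set meets the boundary, which lies outside $B_{2\epsilon}$, so it belongs to $\sB_r$. It also uses Lemma~\ref{lemma:ch-is-ch-of-contrib}, so that each such hull equals the hull of its big members. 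Consequently only $J$ meeting the $k$ big indices matter, and removing small atoms changes nothing. Thus
\[
\ch^{[m]}(\sA)=\bigcap_{J\subset\{1,\dots,k\},\,|J|\le m}\ch\Bigl(\bigcup_{i\le k,\,i\notin J}A_i\Bigr),
\]
where we use that $|I|>k$ allows padding $J$ with small indices. The same identity holds for $\sA^{(n)}$ once we know that $\ch(\bigcup_{i\le k,\, i\notin J}A^{(n)}_i)\supset B_{\epsilon}(0)$, since then the small atoms lie inside. That inclusion follows from Hausdorff convergence of the finitely many big atoms together with $B_{2\epsilon}\subset\ch(\bigcup_{i\le k,\,i\notin J}A_i)$, because the convex hull of finitely many sets is Hausdorff-continuous.

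\emph{Step 2: continuity of finite intersections.} Each hull $C^{(n)}_J:=\ch(\bigcup_{i\le k,\,i\notin J}A^{(n)}_i)$ converges in the Hausdorff metric to $C_J$. There are finitely many $J$, and all $C_J$ contain $B_{\epsilon}(0)$ in their interior for large $n$. Intersection of finitely many convex bodies whose intersection has nonempty interior, here containing a fixed ball around $0$, is Hausdorff-continuous; this is a standard fact, provable for instance via polarity, since polarity turns the intersection into a convex hull of a union and polarity is continuous on $\sK^d_{(0)}$. Hence $\ch^{[m]}(\sA^{(n)})\overset{H}{\to}\ch^{[m]}(\sA)$.

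The main obstacle is Step 1 for the prelimit families. We do not assume general position for $\sA^{(n)}$, so we cannot use Lemma~\ref{lemma:ch-is-ch-of-contrib} there. Instead, we must argue directly that small atoms inside $B_r\subset\Int C^{(n)}_J$ cannot enlarge the hull, which is immediate. Any role for hypothesis~\eqref{thm:m-point-peeling-convergence-determ-a2} (strict convexity and regularity) seems unnecessary in this approach. I would double-check whether it is needed to guarantee that $C_J$ is the correct limit, rather than merely a subset of it, if some subtle non-closedness appears. A possible source is the infinite limit family, where padding $J$ with small atoms must keep $0$ interior; hypothesis~\eqref{thm:m-point-peeling-convergence-determ-a4} addresses exactly that.
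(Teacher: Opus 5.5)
There is one genuine gap, in Step~1. You claim that a single ball $B_{2\epsilon}(0)$ lies in $\ch(\sA\setminus J)$ for \emph{every} $J\subset I$ with $|J|=m$. Hypothesis~(3) only gives a radius that depends on $J$, and $I$ is infinite, so the uniformity is the real content of the claim.

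Your justification is circular. The ``fixed finite set'' over which you take the minimum is the set of big atoms at radius $r$, yet $r$ is chosen below $\epsilon$. The argument that small atoms can be ignored (contributing members meet a boundary lying outside $B_{2\epsilon}(0)$) already presupposes the uniform ball for all $J$. Shrinking $r$ enlarges the set of big atoms and may lower the minimum, so as written nothing stops the inradii from degenerating. Your closing remark that (3) ``addresses exactly that'' is the same oversight.

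The missing input is Lemma~\ref{lem:recursive-peeling-contained-after-deletions}. Under (2) and (3), each layer $\sC^{(s)}(\sA)$ is finite and nonempty:
\begin{itemize}
\item $\sA^{(s)}$ is a cofinite subfamily, so $0\in\Int\ch(\sA^{(s)})$.
\item Its contributing members meet the boundary by Lemma~\ref{lemma:cont-set-intersect-boundary}, so there are only finitely many of them by local finiteness.
\item At least one exists by Lemma~\ref{lemma:ch-is-ch-of-contrib}.
\end{itemize}
Hence $D(\sA)=\infty$. Put $S_m:=\bigcup_{s\le m}\sC^{(s)}(\sA)$, a finite set. For $|J|=m$, some layer $\sC^{(r)}(\sA)$ with $r\le m+1$ misses $J$, and therefore
\[
\ch\Bigl(\bigcup_{i\in I\setminus J}A_i\Bigr)\supseteq\ch_{[r]}(\sA)\supseteq\ch_{[m+1]}(\sA)=\ch\Bigl(\bigcup_{i\in I\setminus S_m}A_i\Bigr).
\]
Now (3) is used for one finite set only, and it yields the uniform ball. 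The uniform ball also gives $k>m$. You tacitly need this, since otherwise your finite formula contains the term $\ch(\varnothing)$.

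With this repaired, the rest of your argument is correct and follows a genuinely different route from the paper's.
\begin{itemize}
\item \emph{The paper's route.} It restricts the intersection to $J\subseteq\bigcup_{k\le m}\sC^{(k)}(\sA)$ via Lemma~\ref{lem:deterministic-layer-reduction} and a swapping argument. It then performs the same reduction for $\sA^{(n)}$, which needs general position of the prelimit families; this is exactly where hypothesis~(1) and Lemma~6.4 of \cite{mm22} enter. Finally it invokes Theorem~\ref{thm:eventual-equality-layers} to match prelimit layers with limit layers, and concludes with Lemma~\ref{thm:converg-remove-m-points} and continuity of intersections.
\item \emph{Your route.} You truncate at one radius and handle the prelimit side by the elementary observation that atoms in $B_r(0)$ lie inside every hull $C^{(n)}_J\supseteq B_\epsilon(0)$. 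So you need neither general position of $\sA^{(n)}$ nor stabilisation of layers, and your suspicion is right that hypothesis~(1) becomes superfluous.
\end{itemize}
The only structural input your approach needs from the limit family is the uniform inradius bound above.
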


\subsection{Auxiliary results: the vague convergence and contributing sets}
For the proof of Theorems~\ref{thm:eventual-equality-layers} and~\ref{thm:m-point-peeling-convergence-determ} we need a couple of auxiliary lemmas.

\begin{lemma}\label{lemma:converg-to-contrib-ch}
Suppose that~\eqref{thm:m-point-peeling-convergence-determ-a1} holds true, $0 \in \Int(\ch(\sA))$ and the family $\sA$ is in general position. Then the following holds:
\begin{equation*}
\ch \left(\bigcup_{i \in I_n} A_i^{(n)} \right) \overset{H}{\to} \ch \left(\bigcup_{i \in \sC(\sA) } A_i \right).
\end{equation*}
\end{lemma}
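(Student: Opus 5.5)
The plan is a two-sided Hausdorff approximation: the hull of a large finite part of the limit family squeezes the approximating hulls from inside, while the sets near the origin cannot push them outward. First, Lemma~\ref{lemma:ch-is-ch-of-contrib} gives $\ch(\sA)=\ch\bigl(\bigcup_{i\in\sC(\sA)}A_i\right)$. Next we show that $\sC(\sA)$ is finite. Every $A_i$ contains the origin and $0\in\Int\ch(\sA)$. By Lemma~\ref{lem:noncontrib-interior}, non-contributing sets lie in the interior of $C:=\ch(\sA)$. A contributing set must hit $\partial C$ by Lemma~\ref{lemma:cont-set-intersect-boundary}, so $\rho(A_i)\ge r_0:=\operatorname{dist}(0,\partial C)>0$ for every $i\in\sC(\sA)$. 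Local finiteness of $\mu$ then makes $\sC(\sA)$ finite, and $C$ is a polytope-like hull of finitely many compact sets.

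Now fix $\varepsilon\in(0,r_0)$ such that $\mu(\partial\sB_\varepsilon)=0$; only countably many radii fail this. The vague convergence \eqref{thm:m-point-peeling-convergence-determ-a1}, with the standard argument for point measures (Urysohn-type test functions supported on small Hausdorff balls around the finitely many atoms in $\sB_\varepsilon$), gives the following. For large $n$ the atoms of $\mu_n$ in $\sB_\varepsilon$ can be labelled $A^{(n)}_{i}$, $i\in I^\varepsilon:=\{i\in I:\rho(A_i)\ge\varepsilon\}$, with $A_i^{(n)}\overset{H}{\to}A_i$. This labelling is a bijection for large $n$ because $\mu$ is simple, as the family is in general position. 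The convex hull of a finite union is Hausdorff continuous in its members, so
\[
\ch\Bigl(\bigcup_{i\in I^\varepsilon}A_i^{(n)}\Bigr)\overset{H}{\to}\ch\Bigl(\bigcup_{i\in I^\varepsilon}A_i\Bigr)=C,
\]
where the last equality holds because $\sC(\sA)\subseteq I^\varepsilon$.

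It remains to control the other atoms of $\mu_n$. They all satisfy $\rho(A_j^{(n)})<\varepsilon$, i.e. $A_j^{(n)}\subseteq B_\varepsilon(0)$. Since $\varepsilon<r_0$, we have $B_\varepsilon(0)\subseteq C$. Write $C_n^\varepsilon:=\ch\bigl(\bigcup_{i\in I^\varepsilon}A_i^{(n)}\bigr)$. Then
\[
C_n^\varepsilon\subseteq\ch\Bigl(\bigcup_{i\in I_n}A_i^{(n)}\Bigr)\subseteq\ch\bigl(C_n^\varepsilon\cup B_\varepsilon(0)\bigr).
\]
The right-hand side converges to $\ch(C\cup B_\varepsilon(0))=C$. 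Both bounds therefore converge to $C$, and the squeeze gives the claim. For $d_H$ one can phrase this as: the Hausdorff distance between the outer and inner bounds is at most $d_H(C_n^\varepsilon,C)$ plus a term that vanishes since $B_\varepsilon(0)\subseteq C$.

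The main obstacle is the matching step in the vague topology. One must extract Hausdorff convergence of the individual atoms above level $\varepsilon$ and exclude spurious atoms of $\mu_n$ in $\sB_\varepsilon$ that converge nowhere. The latter is handled by choosing $\varepsilon$ with $\mu(\partial \sB_\varepsilon)=0$ and by $\mu_n(\sB_\varepsilon)\to\mu(\sB_\varepsilon)$. That last convergence needs a portmanteau argument for this bornology, using continuity of $\rho$ and relative compactness of $\sB_\varepsilon\cap\{\rho\le R\}$ by Blaschke selection.
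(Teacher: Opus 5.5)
Your proposal is correct and follows essentially the same route as the paper. You truncate at a radius with $\mu(\partial\sB_\varepsilon)=0$ and match the finitely many large atoms via vague convergence. You use Lemma~\ref{lemma:cont-set-intersect-boundary} to place all contributing sets among those atoms and Lemma~\ref{lemma:ch-is-ch-of-contrib} to identify the limit. Finally, you absorb the small atoms using a ball around the origin.

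The only difference is cosmetic. The paper shows the small atoms are eventually absorbed exactly, because $B_r(0)\subseteq\ch(\bigcup_{i\le l}A_i^{(n)})$ for large $n$. You instead sandwich the hull between $C_n^\varepsilon$ and $\ch(C_n^\varepsilon\cup B_\varepsilon(0))$, which gives the same convergence.
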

\begin{proof}
Since $\ch(\sA)$ contains origin in its interior, there exists $r > 0$ such that
\begin{equation}\label{lem:4-3-ball-insider}
B_{2r}(0) \subseteq \Int\left(\ch\left(\bigcup_{i \in I} A_i\right)\right).
\end{equation}
By decreasing $r$, if necessary, we may also assume that $\mu(\partial \mathcal B_r)=0$. By local finiteness of $\mu$, only finitely many atoms of $\mu$ intersect $B_r^c(0)$. Relabel them as
$A_1,\dots,A_l$. Thus, $A_i\subset B_r(0)$, for $i>l$. In particular, this implies that $\ch(\mathcal A)$ is compact.  Then, by vague convergence and the standard matching property of atoms, see~\cite[Proposition 3.13]{resnick}, the atoms of $\mu_n$ may be relabeled in such a way that
\[
A_i^{(n)}~\overset{H}{\to}~A_i,\quad n\to\infty,\quad i=1,\dots,l,
\]
and $A_i^{(n)}\subseteq B_r(0)$, for $i>l$ and all sufficiently large $n$.

By continuity of finite unions and of the convex hull operation, see Theorem~12.3.5 in~\cite{Schneider+Weil}, we have
\begin{equation}\label{conv-finite}
\ch\left(\bigcup_{i=1}^l A_i^{(n)}\right)~\overset{H}{\to}~\ch\left(\bigcup_{i=1}^l A_i\right).
\end{equation}
We now identify the limit on the right-hand side. Every contributing set $A_i$, $i\in\sC(\sA)$, meets the boundary of $\ch(\sA)$ by Lemma~\ref{lemma:cont-set-intersect-boundary} (note that $\ch(\sA)$ is full-dimensional). In view of~\eqref{lem:4-3-ball-insider} such a set must intersect $B_r^c(0)$. Hence
$\sC(\sA)\subseteq \{1,\dots,l\}$. Using Lemma~\ref{lemma:ch-is-ch-of-contrib}, we get
\begin{equation}\label{limit-contributing-only}
\ch\left(\bigcup_{i=1}^l A_i\right) = \ch\left(\bigcup_{i\in I}A_i\right) = \ch\left(\bigcup_{i\in\sC(\sA)}A_i\right).
\end{equation}
Consequently,
$$
\ch\left(\bigcup_{i=1}^l A_i^{(n)}\right)~\overset{H}{\to}~\ch\left(\bigcup_{i\in\sC(\sA)}A_i\right).
$$
By~\eqref{lem:4-3-ball-insider} and~\eqref{conv-finite} and~\eqref{limit-contributing-only}, we have
$$
B_r(0)\subseteq \ch\Big(\bigcup_{i=1}^{l}A_i^{(n)}\Big)
$$
for all sufficiently large $n$. Since every remaining atom satisfies $A_i^{(n)}\subset B_r(0)$, $i>l$, adding these atoms does not change the convex hull. Thus,
$$
\ch\Big(\bigcup_{i\in I_n}A_i^{(n)}\Big)=\ch\Big(\bigcup_{i=1}^{l}A_i^{(n)}\Big)
$$
for all sufficiently large $n$. Combining this with~\eqref{conv-finite} and~\eqref{limit-contributing-only} proves the claim.
\end{proof}

The next technical lemma formalises the intuitive fact that removing finitely many converging atoms preserves vague convergence.
\begin{lemma}\label{lemma:vague-conv-removed-atoms}
Suppose that~\eqref{thm:m-point-peeling-convergence-determ-a1} holds true and $\mu$ does not have multiple atoms. Let $J = \{i_1,\dots,i_m\} \subseteq I$ be a finite set of pairwise distinct indices. Then, after a proper relabeling of atoms of $\mu_n$, there exists $N \in \NN$ such that for all $n \geq N$, $J \subseteq I_n$ and, as $n\to\infty$,
\begin{equation*}
A_{i}^{(n)} \overset{H}{\to} A_{i},\quad n\to\infty,\quad  i \in J.
\end{equation*}
Define, for $n\geq N$,
\begin{equation*}
    \tilde\mu_n:=\sum_{i\in I_n\setminus J}\delta_{A_i^{(n)}},
    \qquad
    \tilde\mu:=\sum_{i\in I\setminus J}\delta_{A_i}.
\end{equation*}
Then
\begin{equation*}
    \tilde\mu_n \overset{{\rm v}}{\to} \tilde\mu,\quad n\to\infty.
\end{equation*}
\end{lemma}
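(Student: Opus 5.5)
The plan is to use the matching property of atoms under $\mathcal B$-vague convergence, as in the proof of Lemma~\ref{lemma:converg-to-contrib-ch}, and then to compare the integrals of $\tilde\mu_n$ and $\tilde\mu$ directly.

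First I choose $r_0>0$ with $\rho(A_i)>r_0$ for all $i\in J$. This is possible because $J$ is finite and $A_i\neq\{0\}$. I also require $\mu(\partial\mathcal B_{r_0})=0$, which holds for all but countably many $r_0$, since the sets $\partial\mathcal B_r$ are disjoint over $r$ and $\mu$ is locally finite. Then only finitely many atoms $A_1,\dots,A_l$ of $\mu$ lie in $\mathcal B_{r_0}$, and $J$ is among them. Because $\mu$ is simple, these atoms are pairwise distinct. By \cite[Proposition 3.13]{resnick}, for large $n$ the measure $\mu_n$ has exactly $l$ atoms in $\mathcal B_{r_0}$, and they can be labelled so that $A_i^{(n)}\overset{H}{\to}A_i$ for $i=1,\dots,l$. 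Simplicity makes this labelling unambiguous: each limit atom receives exactly one approximating atom, so no two atoms of $\mu_n$ compete for the same limit. Relabelling the index sets $I_n$ accordingly gives $J\subseteq I_n$ for all $n\ge N$ together with the stated convergence.

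Next, let $f$ be bounded and Hausdorff-continuous on $\sK^d_0\setminus\{\{0\}\}$, vanishing on a Hausdorff neighbourhood of $\{0\}$. Then
$$
\int f\,{\rm d}\tilde\mu_n=\int f\,{\rm d}\mu_n-\sum_{i\in J}f\bigl(A_i^{(n)}\bigr),\qquad \int f\,{\rm d}\tilde\mu=\int f\,{\rm d}\mu-\sum_{i\in J}f(A_i).
$$
The first term converges by~\eqref{thm:m-point-peeling-convergence-determ-a1}. Each $f(A_i^{(n)})\to f(A_i)$ by continuity of $f$, and the sum has finitely many terms. Hence $\int f\,{\rm d}\tilde\mu_n\to\int f\,{\rm d}\tilde\mu$. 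Local finiteness of $\tilde\mu_n$ and $\tilde\mu$ is inherited from $\mu_n$ and $\mu$, so this gives $\tilde\mu_n\overset{{\rm v}}{\to}\tilde\mu$.

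The main obstacle is the matching step. One must check that \cite[Proposition 3.13]{resnick} applies in this bornological setting on the non-locally-compact space $\sK^d_0\setminus\{\{0\}\}$. To do this, I treat $\mathcal B_{r_0}$ as a bounded set whose boundary has $\mu$-measure zero. Around each $A_i$ with $i\le l$ I take small disjoint Hausdorff balls inside $\mathcal B_{r_0}$ whose boundaries also have $\mu$-measure zero. Testing vague convergence against continuous functions supported in these balls, one then shows that each ball eventually contains exactly one atom of $\mu_n$. Letting the radii shrink yields the convergence $A_i^{(n)}\overset{H}{\to}A_i$.
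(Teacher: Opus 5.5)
Your proposal is correct and follows the paper's proof essentially step for step. You choose $r$ below $\min_{i\in J}\rho(A_i)$ with $\mu(\partial\mathcal{B}_r)=0$, match the finitely many atoms in $\mathcal{B}_r$ via \cite[Proposition 3.13]{resnick}, and subtract the finitely many convergent terms $f(A_i^{(n)})\to f(A_i)$, $i\in J$, from $\int f\,{\rm d}\mu_n\to\int f\,{\rm d}\mu$. Your added sketch of the matching step, using portmanteau on small Hausdorff balls with $\mu$-null boundaries, is a useful extra: the paper applies Resnick's proposition to $\mathcal{B}_r$ as if it were compact, but it is not compact in the Hausdorff topology.
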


\begin{proof}
Let \(J=\{i_1,\dots,i_m\}\subseteq I\) be fixed. Since each \(A_i\), \(i\in J\), is an atom of \(\mu\) on
\(\sK^d_0\setminus\{\{0\}\}\), we have \(A_i\neq\{0\}\). Hence, for every \(i\in J\), there exists a point
\(x_i\in A_i\) with \(\|x_i\|>0\). 
Choose $r>0$ so small that
$$
r<\min_{i\in J}\sup_{x\in A_i}\|x\|
$$
and, in addition, $\mu(\partial\mathcal{B}_r)=0$. This is possible because only countably many radii can be charged by the atoms of $\mu$. Then $A_i\cap B_r^c(0)\neq\varnothing$ for every $i\in J$.

Since $\mu$ is locally finite  only finitely many atoms of $\mu$ intersect $B_r^c(0)$. After relabeling, denote them by
$A_1,\dots,A_l$, chosen so that $J\subset \{1,\dots,l\}$.

Now apply~\cite[Proposition 3.13]{resnick} to the compact set $\mathcal{B}_r$. It follows that, after a proper
relabeling of the atoms of $\mu_n$, there exists $N\in\NN$ such that for all $n\ge N$ the atoms
\begin{equation*}
    A_1^{(n)},\dots,A_l^{(n)}
\end{equation*}
are exactly the atoms of $\mu_n$ intersecting \(B_r^c(0)\), and
\begin{equation*}
    A_i^{(n)} \overset{H}{\to} A_i, \quad i=1,\dots,l.
\end{equation*}
In particular, for all $n\ge N$, we have $J\subset I_n$ and
$$
    A_i^{(n)} \overset{H}{\to} A_i, \quad i\in J.
$$
We shall now prove the vague convergence $\tilde\mu_n \overset{{\rm v}}{\to} \tilde\mu$. Let $f:\sK^d_0\setminus\{\{0\}\}\to\R$ be a continuous function vanishing in a Hausdorff-neighborhood of $\{0\}$. Then
\begin{equation*}
    \int f\, {\rm d}\tilde{\mu}_n = \int f\, {\rm d}\mu_n-\sum_{i\in J} f(A_i^{(n)}),
\end{equation*}
and similarly
\begin{equation*}
    \int f\, {\rm d}\tilde{\mu} = \int f\, {\rm d}\mu-\sum_{i\in J} f(A_i).
\end{equation*}
Since $\mu_n \overset{{\rm v}}{\to} \mu$, we have $\int f\, d\mu_n \to \int f\, d\mu$. Moreover, for each $i\in J$, the convergence
$A_i^{(n)} \overset{H}{\to} A_i$ and continuity of $f$ imply $f(A_i^{(n)})\to f(A_i)$. Therefore,
\begin{equation*}
    \int f\, {\rm d}\tilde{\mu}_n
    \to
    \int f\, {\rm d}\mu-\sum_{i\in J} f(A_i)
    =
    \int f\, d\tilde{\mu},\quad n\to\infty.
\end{equation*}
This proves the stated vague convergence.
\end{proof}

\begin{lemma}\label{lemma:single-removed-conv}
Suppose that~\eqref{thm:m-point-peeling-convergence-determ-a1} holds true and the family $\sA$ is in general position cofinitely. Assume that $0 \in \Int(\ch_{[2]}(\sA))$. Fix $i \in \sC(\sA)=\sC^{(1)}(\sA)$. Then there exists a sequence $(A_i^{(n)})$ such that $A_i^{(n)}$ is an atom of $\mu_n$, $i\in I_n$, $A_i^{(n)} \overset{H}{\to} A_i$, and
\begin{equation}\label{lemma:single-removed-conv-claim}
\ch\left(\bigcup_{j \in I_n \setminus\{ i \}} A_j^{(n)}\right) \overset{H}{\to} \ch\left(\bigcup_{j \in \left( \sC^{(1)}(\sA) \cup \sC^{(2)}(\sA)\right) \setminus \{ i\} } A_j\right).
\end{equation}
\end{lemma}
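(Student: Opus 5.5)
The plan is to reduce the claim to Lemma~\ref{lemma:converg-to-contrib-ch} applied to the family with the $i$-th atom removed, and then to identify the limit via Lemma~\ref{lemma:remove-1-point-from-ch}.

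\textbf{Step 1: matching the atoms.} Since $\sA$ is in general position, $\mu$ is simple. Lemma~\ref{lemma:vague-conv-removed-atoms} with $J=\{i\}$ then gives, after relabeling, an atom $A_i^{(n)}$ of $\mu_n$ for $n\ge N$ with $A_i^{(n)}\overset{H}{\to}A_i$. It also gives
$$
\tilde\mu_n:=\sum_{j\in I_n\setminus\{i\}}\delta_{A_j^{(n)}}\overset{{\rm v}}{\to}\tilde\mu:=\sum_{j\in I\setminus\{i\}}\delta_{A_j}.
$$

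\textbf{Step 2: apply Lemma~\ref{lemma:converg-to-contrib-ch} to $\tilde\mu_n$ and $\tilde\mu$.} This needs two hypotheses.
\begin{itemize}
\item The family $\tilde\sA:=\sA\setminus\{A_i\}$ must be in general position. This holds because $\sA$ is in general position cofinitely and $\{i\}$ is finite.
\item We need $0\in\Int\ch(\tilde\sA)$. Since $i\in\sC^{(1)}(\sA)$, we have $I_2=I\setminus\sC^{(1)}(\sA)\subseteq I\setminus\{i\}$. Hence $\ch_{[2]}(\sA)\subseteq\cch(\tilde\sA)$, so $0\in\Int\ch_{[2]}(\sA)\subseteq\Int\cch(\tilde\sA)=\Int\ch(\tilde\sA)$.
\end{itemize}
The lemma then yields
$$
\ch\Big(\bigcup_{j\in I_n\setminus\{i\}}A_j^{(n)}\Big)\overset{H}{\to}\ch\Big(\bigcup_{j\in\sC(\tilde\sA)}A_j\Big)=\ch(\tilde\sA),
$$
where the last equality is Lemma~\ref{lemma:ch-is-ch-of-contrib}.

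\textbf{Step 3: identify the limit.} It remains to show
$$
\ch(\tilde\sA)=\ch\Big(\bigcup_{j\in(\sC^{(1)}(\sA)\cup\sC^{(2)}(\sA))\setminus\{i\}}A_j\Big).
$$
This is the statement of Lemma~\ref{lemma:remove-1-point-from-ch}, but its hypotheses need checking.
\begin{itemize}
\item $D(\sA)\ge3$: the interior condition makes $\ch(\sA^{(2)})$ full-dimensional, so the process does not terminate at step 2 because of dimension. We must also know $\sC^{(2)}(\sA)\neq\varnothing$; this follows from Lemma~\ref{lemma:ch-is-ch-of-contrib} applied to $\sA^{(2)}$. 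That family is in general position since it is a cofinite subfamily, given $|\sC^{(1)}|<\infty$. Its convex hull is nondegenerate, so it cannot be generated by an empty family.
\item $|\sC^{(1)}(\sA)|<\infty$: as in the proof of Lemma~\ref{lemma:converg-to-contrib-ch}, contributing sets meet $\partial\ch(\sA)$ by Lemma~\ref{lemma:cont-set-intersect-boundary}. They therefore leave a ball $B_{2r}(0)\subset\Int\ch(\sA)$, and local finiteness allows only finitely many such atoms.
\end{itemize}
Alternatively, the argument of Lemma~\ref{lemma:remove-1-point-from-ch} can be run directly. If $j$ contributes to $\tilde\sA$ but $j\notin\sC^{(1)}\cup\sC^{(2)}$, then Lemma~\ref{lem:noncontrib-interior} in $\sA^{(2)}$ gives $A_j\subseteq\Int\ch_{[2]}(\sA)\subseteq\Int\ch(\tilde\sA)$. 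This contradicts Lemma~\ref{lemma:cont-set-intersect-boundary}.

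\textbf{Main obstacle.} I expect the main difficulty to be the bookkeeping in Step 3. One must make sure that $\sA^{(2)}$ is in general position, which requires finiteness of $\sC^{(1)}(\sA)$ via local finiteness together with $0$ in the interior. One must also rule out termination at step 2, so that $D(\sA)\ge3$. Everything else is a direct combination of Lemmas~\ref{lemma:vague-conv-removed-atoms} and~\ref{lemma:converg-to-contrib-ch}.
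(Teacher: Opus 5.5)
Your proposal is correct and takes essentially the same route as the paper. You match the atom $A_i^{(n)}$ via Lemma~\ref{lemma:vague-conv-removed-atoms}, apply Lemma~\ref{lemma:converg-to-contrib-ch} to the family with the $i$-th atom removed, and identify the limit via Lemma~\ref{lemma:remove-1-point-from-ch}. Your explicit checks of $0\in\Int\ch(\sA\setminus\{A_i\})$, $D(\sA)\ge 3$ and $|\sC^{(1)}(\sA)|<\infty$ supply hypotheses that the paper's terse proof leaves implicit.
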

\begin{proof}
The existence and convergence $A_i^{(n)} \overset{H}{\to} A_i$ follow by the same argument as in the proof of Lemma~\ref{lemma:vague-conv-removed-atoms} with $m=1$. Observe that $\mu$ does not have multiple atoms because $\sA$ is in general position. We only need to verify~\eqref{lemma:single-removed-conv-claim}.
For all sufficiently large $n\in\mathbb{N}$, consider the reduced families
\begin{equation*}
\sA_n^{(i)} := \{A_j^{(n)} : j \in I_n \setminus \{i\}\}\quad\text{and}\quad\sA^{(i)} := \{A_j : j \in I \setminus \{i\}\}.
\end{equation*}
The same argument as in Lemma~\ref{lemma:converg-to-contrib-ch} applies to these reduced families and yields
\begin{equation*}
\ch\left(\bigcup_{j \in I_n \setminus \{i\}} A_j^{(n)}\right)~\overset{H}{\to}~\ch\left(\bigcup_{j \in I \setminus \{i\}} A_j\right).
\end{equation*}
By Lemma~\ref{lemma:remove-1-point-from-ch} the right-hand side coincides with the right-hand side of~\eqref{lemma:single-removed-conv-claim}.
\end{proof}

Similarly as Lemma~\ref{lem:deterministic-layer-reduction} is an inductive generalization of Lemma~\ref{lemma:remove-1-point-from-ch} the next result is a generalization of Lemma~\ref{lemma:single-removed-conv} when one removes finitely many sets rather than just one.

\begin{lemma}\label{thm:converg-remove-m-points}
Suppose that~\eqref{thm:m-point-peeling-convergence-determ-a1} holds true and the family $\sA$ is in general position cofinitely. Fix $m\geq 1$ and assume $0\in \Int(\ch_{[m+1]}(\sA))$. Let $J = \{i_1,\dots,i_m\} \subseteq I$ be a finite set of pairwise distinct indices. Then, for all sufficiently large $n\in\mathbb{N}$, $J\subseteq I_n$ and, after proper relabeling of atoms, 
          \begin{equation*}
              A_{i_r}^{(n)} \overset{H}{\to} A_{i_r},\quad n\to\infty,\quad r = 1, \dots, m.
          \end{equation*}
          Furthermore,
             \begin{equation*}
                     \ch\left(\bigcup_{j \in I_n \setminus J} A_j^{(n)}\right)~\overset{H}{\to}~\ch\left(\bigcup_{j \in (\bigcup_{i = 1}^{m+1} \sC^{(i)}(\sA))  \setminus J} A_j\right),\quad n\to\infty.
             \end{equation*}
\end{lemma}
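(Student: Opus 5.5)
The plan is to repeat the argument of Lemma~\ref{lemma:single-removed-conv}, with the single deleted index replaced by the finite set $J$. The two new inputs are Lemma~\ref{lemma:vague-conv-removed-atoms}, which handles the vague convergence after deletion, and Lemma~\ref{lem:deterministic-layer-reduction}, which identifies the limit.

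\textbf{Step 1 (matching the deleted atoms).} Since $\sA$ is in general position, $\mu$ has no multiple atoms. Lemma~\ref{lemma:vague-conv-removed-atoms} then gives, after relabeling, that $J\subseteq I_n$ for all large $n$ and $A_{i_r}^{(n)}\overset{H}{\to}A_{i_r}$ for $r=1,\dots,m$. The same lemma gives the vague convergence $\tilde\mu_n\overset{{\rm v}}{\to}\tilde\mu$, where $\tilde\mu_n$ and $\tilde\mu$ correspond to the reduced families
\[
\tilde\sA^{(n)}:=\{A^{(n)}_j:j\in I_n\setminus J\},\qquad \tilde\sA:=\{A_j:j\in I\setminus J\}.
\]

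\textbf{Step 2 (convergence for the reduced families).} I would apply Lemma~\ref{lemma:converg-to-contrib-ch} to the pair $(\tilde\mu_n,\tilde\mu)$. Its hypotheses hold as follows.
\begin{itemize}
\item $\tilde\sA$ is in general position, because $\sA$ is in general position cofinitely and $J$ is finite.
\item $0\in\Int\ch(\tilde\sA)$. This follows from Lemma~\ref{lem:recursive-peeling-contained-after-deletions}: since $\ch_{[m+1]}(\sA)\subseteq\ch(\tilde\sA)$, the assumption $0\in\Int(\ch_{[m+1]}(\sA))$ transfers.
\end{itemize}
Lemma~\ref{lemma:converg-to-contrib-ch} then yields $\ch(\tilde\sA^{(n)})\overset{H}{\to}\ch\bigl(\bigcup_{j\in\sC(\tilde\sA)}A_j\bigr)$. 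By Lemma~\ref{lemma:ch-is-ch-of-contrib} applied to $\tilde\sA$, this limit equals $\ch\bigl(\bigcup_{j\in I\setminus J}A_j\bigr)$.

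\textbf{Step 3 (identifying the limit).} It remains to show
\[
\ch\Bigl(\bigcup_{j\in I\setminus J}A_j\Bigr)=\ch\Bigl(\bigcup_{j\in(\bigcup_{r=1}^{m+1}\sC^{(r)}(\sA))\setminus J}A_j\Bigr),
\]
which is exactly Lemma~\ref{lem:deterministic-layer-reduction}. To invoke it I must check its hypotheses.
\begin{itemize}
\item \textbf{Depth.} Since $0\in\Int(\ch_{[m+1]}(\sA))$, every layer $\sL^{(r)}$ with $r\le m+1$ is full-dimensional, so the dimension-drop termination does not occur before step $m+1$.
\item \textbf{Finiteness of the layers.} Each $\sC^{(r)}(\sA)$ with $r\le m+1$ is finite. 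Indeed, by Lemma~\ref{lemma:cont-set-intersect-boundary} every contributing set of $\sA^{(r)}$ meets $\partial\ch(\sA^{(r)})$. That boundary lies outside a fixed ball $B_{2s}(0)\subseteq\Int\ch_{[m+1]}(\sA)$. Local finiteness of $\mu$ then allows only finitely many such atoms.
\end{itemize}

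\textbf{Main obstacle: the depth condition $D(\sA)\ge m+2$.} This needs $\sC^{(r)}(\sA)\neq\varnothing$ for $r\le m+1$, and also full dimension at level $m+2$, which interiority at level $m+1$ alone does not give.
\begin{itemize}
\item Nonemptiness: if some $\sC^{(r)}(\sA)$ were empty, the recursion would stop, and the layers beyond it would be trivially constant. I would show this still gives the identity. Alternatively, I would rerun the inductive proof of Lemma~\ref{lem:deterministic-layer-reduction}, since that induction only uses Lemma~\ref{lemma:remove-1-point-from-ch} at levels up to $m+1$.
\item Dimension at level $m+2$: here I would argue directly that any $j\notin\bigcup_{r\le m+1}\sC^{(r)}(\sA)$ lies in $\Int\ch_{[m+1]}(\sA)$, by Lemma~\ref{lem:noncontrib-interior} applied to $\sA^{(m+1)}$. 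By Lemma~\ref{lem:recursive-peeling-contained-after-deletions} it is then interior to $\ch(\tilde\sA)$, so it is not contributing to $\tilde\sA$ by Lemma~\ref{lemma:cont-set-intersect-boundary}. Combined with Lemma~\ref{lemma:ch-is-ch-of-contrib} for $\tilde\sA$, this gives the required identity without needing the $(m+2)$-th level at all.
\end{itemize}

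This direct argument is the step I expect to need the most care.
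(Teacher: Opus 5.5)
Your Steps 1--3 follow the paper's proof exactly. Lemma~\ref{lemma:vague-conv-removed-atoms} gives the matching of atoms and the reduced vague convergence. Lemma~\ref{lem:recursive-peeling-contained-after-deletions} gives $0\in\Int\ch(\tilde\sA)$. Lemmas~\ref{lemma:converg-to-contrib-ch} and~\ref{lemma:ch-is-ch-of-contrib} then give the convergence, and Lemma~\ref{lem:deterministic-layer-reduction} identifies the limit. So the proposal is correct.

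\textbf{The depth condition is not an obstacle.} The recursive peeling $\ch_{[m+1]}(\sA)$ is only defined when $m+1<D(\sA)$. Hence the hypothesis $0\in\Int(\ch_{[m+1]}(\sA))$ already contains $D(\sA)\ge m+2$; the paper uses exactly this in the proof of Theorem~\ref{thm:eventual-equality-layers}. Moreover, $D(\sA)\ge m+2$ only requires non-termination at steps $1,\dots,m+1$, so nothing about full dimension at level $m+2$ is needed. Your Step 2 also already invokes Lemma~\ref{lem:recursive-peeling-contained-after-deletions}, which carries the same hypothesis $D(\sL)\ge m+2$. Had the depth really been in doubt, Step 2 would have been affected too.

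\textbf{Your fallback argument is correct and shorter.} It proves the Step 3 identity directly:
\begin{itemize}
\item a member outside the first $m+1$ layers lies in $\Int\ch_{[m+1]}(\sA)$ by Lemma~\ref{lem:noncontrib-interior} applied to $\sA^{(m+1)}$;
\item it therefore lies in $\Int\ch(\tilde\sA)$ by the pigeonhole inclusion of Lemma~\ref{lem:recursive-peeling-contained-after-deletions};
\item so it is not contributing to $\tilde\sA$ by Lemma~\ref{lemma:cont-set-intersect-boundary};
\item Lemma~\ref{lemma:ch-is-ch-of-contrib} for $\tilde\sA$ then gives the equality.
\end{itemize}
This bypasses the Appendix induction behind Lemma~\ref{lem:deterministic-layer-reduction}, namely Lemma~\ref{lem:single-layer-inclusion} and Corollary~\ref{cor:multi-layer-inclusion}, including the somewhat informal claim there about the depth of reduced families.

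\textbf{Finiteness check.} Your verification that $\sC^{(r)}(\sA)$ is finite for $r\le m+1$ is a useful addition. The cofinite branch of Lemmas~\ref{lem:deterministic-layer-reduction} and~\ref{lem:recursive-peeling-contained-after-deletions} needs it, and the paper does not check it in this proof.
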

\begin{proof}
The first part follows by the same argument as in the beginning of Lemma~\ref{lemma:vague-conv-removed-atoms}. We only prove the convergence of the convex hulls. Define the reduced measures by
\begin{equation*}
    \mu_n^{(J)}:=\sum_{j\in I_n\setminus J}\delta_{A_j^{(n)}},
    \qquad
    \mu^{(J)}:=\sum_{j\in I\setminus J}\delta_{A_j}.
\end{equation*}
By Lemma~\ref{lemma:vague-conv-removed-atoms} $\mu_n^{(J)}~\overset{{\rm v}}{\to}~\mu^{(J)}$, $n\to\infty$. By Lemma~\ref{lem:recursive-peeling-contained-after-deletions}, $0\in \Int(\ch_{[m+1]}(\sA))$ implies $0\in\Int\ch(\cup_{j\in I\setminus J}A_j)$. Finally, Lemma~\ref{lemma:converg-to-contrib-ch} applied with measures $\mu_n^{(J)}$ converging to $\mu^{(J)}$ gives
$$
\ch\left(\bigcup_{j \in I_n \setminus J} A_j^{(n)}\right)~\overset{H}{\to}~\ch\left(\bigcup_{j \in \sC(\sA \setminus \{A_i:i\in J\})} A_j\right),\quad n\to\infty.
$$
It remains to note that by Lemma~\ref{lemma:ch-is-ch-of-contrib}
$$
\ch\left(\bigcup_{j \in \sC(\sA \setminus \{A_i:i\in J\})} A_j\right)=\ch\left(\bigcup_{j \in I \setminus J} A_j\right)
$$
and by Lemma~\ref{lem:deterministic-layer-reduction}
$$
\ch\left(\bigcup_{j \in I \setminus J} A_j\right)
=
\ch\left(
        \bigcup_{j\in \left(\bigcup_{r=1}^{m+1} \sC^{(r)}(\sA)\right)\setminus J} A_j
        \right).
$$
The proof is complete.
\end{proof}

\subsection{Proof of Theorem~\ref{thm:eventual-equality-layers}}
Since $0\in\Int(\ch_{[m]}(\sA))$, we have $m<D(\sA)$. Recall that
$$
\sA=\{A_i:i\in I\},
\quad\text{and}\quad
\sA^{(n)}=\{A_i^{(n)}:i\in I_n\},\quad n\in\mathbb{N}.
$$
All relabelings below are made by using the usual matching property of atoms
under vague convergence, see Proposition 3.13 in~\cite{resnick}. Since only 
finitely many atoms are matched at each stage, after increasing \(N\) finitely 
many times we keep one compatible relabeling throughout the proof.

First observe that the assumption $0\in \Int(\ch_{[m]}(\sA))$ implies that the first $m$ recursive hulls of $\sA$ are well-defined. Moreover, the recursive hulls are nested, and hence
$$
\ch_{[m]}(\sA)\subset \ch_{[r]}(\sA),
\quad r=1,\ldots,m.
$$
Consequently, $0\in \Int(\ch_{[r]}(\sA))$, for all $r=1,\ldots,m$.

If $m=1$, the assertion about the equality of the first $m-1$ layers is void. Since $0\in\Int(\ch(\sA))$, Lemma~\ref{lemma:converg-to-contrib-ch} and Lemma~\ref{lemma:ch-is-ch-of-contrib} yield
$$
\ch(\sA^{(n)})=\ch_{[1]}(\sA^{(n)})~\overset{H}{\to}~
\ch\left(\bigcup_{i\in \sC^{(1)}(\sA)}A_i\right)=\ch(\sA)=\ch_{[1]}(\sA).
$$
Thus, the theorem is proved for $m=1$. In the rest of the proof we assume $m\ge 2$.

We first prove a one-step stability assertion for cofinite subfamilies. Let
$S\subseteq I$ be finite and put
$$
\sB:=\{A_i:i\in I\setminus S\}.
$$
Assume that the associated approximating subfamilies
$$
\sB^{(n)}:=\{A_i^{(n)}:i\in I_n\setminus S\}
$$
converge vaguely to $\sB$, and assume that
$$
0\in \Int(\ch_{[2]}(\sB)).
$$
We claim that after a proper relabeling, there exists $N_{\sB}\in\mathbb N$ such
that
\begin{equation}\label{eq:thm41-proof0}
\sC^{(1)}(\sB^{(n)})=\sC^{(1)}(\sB),
\qquad n\ge N_{\sB}.
\end{equation}
Indeed, since
$$
\ch_{[2]}(\sB)\subset \ch(\sB),
$$
we have $0\in\Int(\ch(\sB))$. By Lemma~\ref{lemma:converg-to-contrib-ch} and Lemma~\ref{lemma:ch-is-ch-of-contrib},
\begin{equation}\label{eq:thm41-proof1}
\ch(\sB^{(n)})~\overset{H}{\to}~\ch(\sB).
\end{equation}
Choose \(\rho>0\) such that
$$
B_{2\rho}(0)\subset \Int(\ch(\sB))
$$
and such that the boundary of the corresponding truncation set in
$\sK_0^d\setminus\{\{0\}\}$ is not charged by the limiting point measure $\mu_{\mathcal{B}}$.
By local finiteness, only finitely many atoms of $\sB$ intersect
$B_\rho^c(0)$. Relabel them as
$$
A_1,\ldots,A_\ell.
$$
For all sufficiently large $n$, the atoms of $\sB^{(n)}$ intersecting $B_\rho^c(0)$ are precisely
$$
A_1^{(n)},\ldots,A_\ell^{(n)},
$$
and
$$
A_i^{(n)}~\overset{H}{\to}~A_i,\quad i=1,\ldots,\ell .
$$
In view of~\eqref{eq:thm41-proof1}, after increasing $N_{\sB}$, if necessary, we also have
$$
B_\rho(0)\subset \Int(\ch(\sB^{(n)})),\quad n\ge N_{\sB}.
$$
By Lemma~\ref{lemma:cont-set-intersect-boundary}, every contributing member of \(\sB\), and every contributing
member of \(\sB^{(n)}\) for \(n\ge N_{\sB}\), must meet the boundary of the
corresponding hull. Hence every such member must intersect \(B_\rho^c(0)\).
Therefore all possible contributing indices, both for \(\sB\) and for
\(\sB^{(n)}\), belong to the finite set \(\{1,\ldots,\ell\}\).

Fix $i\in\{1,\ldots,\ell\}$, and put
$$
W_i:=\ch\left(\bigcup_{j\in E\setminus\{i\}}A_j\right),
\qquad
W_i^{(n)}:=
\ch\left(\bigcup_{j\in E_n\setminus\{i\}}A_j^{(n)}\right),
$$
where $E := I \setminus S, E_n := I_n \setminus S$. We claim that
\begin{equation}\label{eq:thm41-proof2}
W_i^{(n)}~\overset{H}{\to}~W_i,\quad n\to\infty.
\end{equation}
If $i\in \sC^{(1)}(\sB)$, this follows from Lemma~\ref{lemma:single-removed-conv}, together with
Lemma~\ref{lemma:remove-1-point-from-ch}, because
$$
0\in\Int(\ch_{[2]}(\sB)).
$$
If $i\notin \sC^{(1)}(\sB)$, then
$$
W_i=\ch(\sB),
$$
so $0\in\Int(W_i)$. By Lemma~\ref{lemma:vague-conv-removed-atoms}, deleting the matched atom $W_i^{(n)}$
preserves vague convergence. Therefore, Lemma~\ref{lemma:converg-to-contrib-ch}, applied to the reduced
families $\sB\setminus\{A_i\}$ and $\sB^{(n)}\setminus\{A_i^{(n)}\}$, gives~\eqref{eq:thm41-proof2}.

We now distinguish the two cases. If \(i\in\sC^{(1)}(\sB)\), then $A_i\not\subseteq W_i$. Choose $x_i\in A_i\setminus W_i$. Since $W_i$ is compact, $\eta_i:={\rm dist}(x_i,W_i)>0$. For all sufficiently large $n$, there exists
$x_i^{(n)}\in A_i^{(n)}$ such that
$$
\|x_i^{(n)}-x_i\|<\eta_i/3,
$$
and also
$$
d_H(W_i^{(n)},W_i)<\eta_i/3.
$$
Thus, $x_i^{(n)}\notin W_i^{(n)}$, and hence $A_i^{(n)}\not\subseteq W_i^{(n)}$. Therefore, $i\in\sC^{(1)}(\sB^{(n)})$, for all sufficiently large $n$.

If $i\notin\sC^{(1)}(\sB)$, then Lemma~\ref{lem:noncontrib-interior} gives
$$
A_i\subset \Int(W_i),
$$
where the interior is the ordinary interior, since $W_i=\ch(\sB)$ is
full-dimensional. Since $A_i$ is compact, there exists $\varepsilon_i>0$
such that
$$
A_i+\varepsilon_i B_1(0)\subseteq W_i.
$$
Using
$$
A_i^{(n)}~\overset{H}{\to}~A_i
\qquad\text{and}\qquad
W_i^{(n)}~\overset{H}{\to}~W_i,\quad n\to\infty,
$$
we obtain, for all sufficiently large $n$,
$$
A_i^{(n)}\subseteq W_i^{(n)}.
$$
Thus, $i\notin\sC^{(1)}(\sB^{(n)})$, for all sufficiently large $n$.

Since the set $\{1,\ldots,\ell\}$ is finite, further increasing $N_{\sB}$, if necessary gives~\eqref{eq:thm41-proof0}. This proves the aforementioned one-step stability assertion. We now apply it inductively.  Put
$$
S_0:=\varnothing,\quad S_r:=\bigcup_{q=1}^r \sC^{(q)}(\sA),\quad 1\leq r\leq D(\sA)-1.
$$
First apply the one-step stability assertion with $\sB=\sA$. Since $0\in\Int(\ch_{[2]}(\sA))$, there exists $N_1\in\mathbb N$ such that
$$
\sC^{(1)}(\sA^{(n)})=\sC^{(1)}(\sA),\quad n\ge N_1.
$$
In particular, the first layer $\sC^{(1)}(\sA)$ is finite.

Assume now that, for some $r\in\{2,\ldots,m-1\}$, $m\leq D(\sA)-1$, we have already found
$N_{r-1}$ such that, for all $n\ge N_{r-1}$,
$$
\sC^{(q)}(\sA^{(n)})=\sC^{(q)}(\sA),\quad q=1,\ldots,r-1.
$$
Then $S_{r-1}$ is finite, and for all $n\ge N_{r-1}$ the family obtained from $\sA^{(n)}$ after deleting its first $r-1$ recursive layers is
$$
\sM^{(n)}:=\{A_i^{(n)}:i\in I_n\setminus S_{r-1}\}.
$$
The corresponding limiting family is
$$
\sM:=\{A_i:i\in I\setminus S_{r-1}\}.
$$
By Lemma~\ref{lemma:vague-conv-removed-atoms}, the point measures associated with $\sM^{(n)}$ converge
vaguely to the point measure associated with $\sM$. Moreover,
$$
\ch(\sM)=\ch_{[r]}(\sA),\quad \ch_{[2]}(\sM)=\ch_{[r+1]}(\sA).
$$
Since $r+1\le m$ and the recursive hulls are nested,
$$
\ch_{[m]}(\sA)\subset \ch_{[r+1]}(\sA)=\ch_{[2]}(\sM).
$$
Therefore, $0\in\Int(\ch_{[2]}(\sM))$. Also, every subfamily of $\sM$ is in general position cofinitely, because it is obtained from $\sA$ by deleting finitely many members. Hence the one-step stability assertion, applied to $\sM^{(n)}$ and $\sM$, gives an integer $N_r\ge N_{r-1}$ such that
$$
\sC^{(1)}(\sM^{(n)})=\sC^{(1)}(\sM),\quad n\ge N_r.
$$
By the definition of the recursive peeling, this is exactly
$$
\sC^{(r)}(\sA^{(n)})=\sC^{(r)}(\sA),\quad n\ge N_r.
$$
The induction is complete. Hence there exists $N\in\mathbb{N}$ such that
$$
\sC^{(r)}(\sA^{(n)})=\sC^{(r)}(\sA),\quad r=1,\ldots,m-1,
$$
for all $n\ge N$.

It remains to prove the Hausdorff convergence of the $m$-th recursive hulls.
For $n\ge N$, 
$$
\ch_{[m]}(\sA^{(n)})
=
\ch\left(
\bigcup_{i\in I_n\setminus S_{m-1}} A_i^{(n)}
\right).
$$
By Lemma~\ref{lemma:vague-conv-removed-atoms}, deleting the finitely many matched atoms with indices in
$S_{m-1}$ preserves vague convergence, so
$$
\sum_{i\in I_n\setminus S_{m-1}}\delta_{A_i^{(n)}}~\overset{{\rm v}}{\to}~
\sum_{i\in I\setminus S_{m-1}}\delta_{A_i},\quad n\to\infty.
$$
The limiting reduced family
$$
\sA^{(m)}=\{A_i:i\in I\setminus S_{m-1}\}
$$
is in general position, and
$$
0\in \Int(\ch(\sA^{(m)}))
=
\Int(\ch_{[m]}(\sA)).
$$
Thus, Lemma~\ref{lemma:converg-to-contrib-ch}, applied to the reduced families, gives
\[
\ch_{[m]}(\sA^{(n)})
\xrightarrow{H}
\ch\left(
\bigcup_{i\in\sC^{(1)}(\sA^{(m)})} A_i
\right).
\]
Finally, by Lemma~\ref{lemma:ch-is-ch-of-contrib} applied to \(\sA^{(m)}\),
\[
\ch\left(
\bigcup_{i\in\sC^{(1)}(\sA^{(m)})} A_i
\right)
=
\ch(\sA^{(m)})
=
\ch_{[m]}(\sA).
\]
Hence
\[
\ch_{[m]}(\sA^{(n)})\xrightarrow{H}\ch_{[m]}(\sA),
\qquad n\to\infty.
\]
The proof of Theorem~\ref{thm:eventual-equality-layers} is complete.

\subsection{Proof of Theorem~\ref{thm:m-point-peeling-convergence-determ}}
Recall that 
$$
\ch^{[m]}(\sA) = \bigcap_{J\subset I,|J|=m} \cch\left(\bigcup_{i \in I \setminus J} A_i\right)\quad\text{and}\quad\ch^{[m]}(\sA^{(n)}) = \bigcap_{J\subset I_n,|J|=m} \cch\left(\bigcup_{i \in I_n \setminus J} A_i^{(n)}\right).
$$
First, observe that the closures can be removed since the inner unions can be replaced by finite unions only over contributing sets, as we have repeatedly seen before. 

Let us show now, that the intersection in the first (respectively, second) equality above can be restricted exclusively to subsets drawn from the first $m$ contributing layers of the family $\sA$ (respectively, $\sA^{(n)}$). This will allow us to rewrite the right-hand side of the limit statement we are aiming to prove in a simpler way.

By Lemma~\ref{lem:deterministic-layer-reduction} for every $J\subseteq I$, $|J|=m$, we have
\begin{equation*}
    \ch\left(\bigcup_{i \in I \setminus J} A_i\right) = \ch \left( \bigcup_{i \in ( \bigcup_{i = 1}^{m+1}\sC^{(i)}(\sA)) \setminus J} A_i \right).
\end{equation*}
Thus, after deleting $m$ sets we observe a set, which is guaranteed to be defined by sets with indices $i \in \bigcup_{i = 1}^{m+1} \sC_{i}(\sA)$. Note that it will be defined by exactly sets with indices $i \in \bigcup_{i = 1}^{m+1} \sC^{(i)}(\sA)$ only if $J$ contains one index from each $\sC^{(1)}(\sA), \dots, \sC^{(m)}(\sA)$. Consequently, if an index set $J$ of size $m$ contains an index $j \notin \bigcup_{k=1}^{m} \sC^{(k)}(\sA)$, the set $A_j$ is strictly in the interior of the convex hull formed by the other sets. Removing it does not alter the convex hull. This gives us the equality:
\begin{equation*}
    \ch\left(\bigcup_{i \in I \setminus J} A_i\right) = \ch\left(\bigcup_{i \in  I \setminus (J \setminus \{j\})} A_i \right).
\end{equation*}

Now, we can swap $j$ for some contributing set of first $m$ layers of $\sL$ to obtain a non-larger convex hull.
We choose an index $k \in (\bigcup_{k=1}^{m} \sC^{(k)}(\sA)) \setminus J$ and define a new index set $J' = (J \setminus \{j\}) \cup \{k\}$. 

A very important note is the following inclusion, which follows from Corollary~\ref{cor:multi-layer-inclusion} in the Appendix:
\begin{equation*}
    \sC(\sA \setminus \{ A_i: i \in J \setminus \{ j \}\}) \subseteq \bigcup_{i = 1}^m \sC^{(i)}(\sA).
\end{equation*}
Thus, it prevents us from stating that removing set with index $k$, instead of index $j$ will make convex hull strictly smaller. However, it is sufficient for our argument to simply state that the resulting convex hull will be non-larger. 

Because $J'$ removes the set $A_k$, which may make the convex hull smaller, instead of $A_j$, which does not influence the convex hull, the resulting convex hull is bounded by the hull produced by $J$:
\begin{equation*}
    \ch\left(\bigcup_{i \in I \setminus J'} A_i \right) \subseteq \ch\left(\bigcup_{i \in I \setminus (J\setminus \{j\})} A_i\right) = \ch\left(\bigcup_{i \in I \setminus J} A_i\right).
\end{equation*}

By repeating this swapping process for every index in $J$ that does not belong to $\bigcup_{k=1}^{m } \sC^{(k)}(\sA)$, we can construct an index set $\tilde{J} \subseteq \bigcup_{k=1}^{m} \sC^{(k)}(\sA)$ of size $m$ such that its resulting convex hull is a subset of the one produced by $J$. 

Because these supersets are redundant in an intersection, we may restrict the intersection strictly to index sets drawn from the first $m$ layers:
\begin{equation}\label{eq:thm42-proof1}
    \bigcap_{J\subset I,|J|=m} \cch\left(\bigcup_{i \in I \setminus J} A_i \right) = \bigcap_{J \subseteq \bigcup_{k=1}^{m} \sC^{(k)}(\sA),|J|=m} \cch\left(\bigcup_{i \in I \setminus J} A_i\right).
\end{equation}
We shall continue our proof by showing that it is also possible to rewrite the intersection on the left side of the limit statement in a similar way. Let $\sA^{(n)}$ denote the set of atoms of $\mu_n$. Since we assumed that all sets from $\sA^{(n)}$ are strictly convex sets, by Lemma 6.4 from~\cite{mm22} the sets from $\sA^{(n)}$ are in a general position, for sufficiently large $n \in \NN$. Here one has to observe that the general position part of the cited lemma utilizes only conditions (i) and (iii) of that lemma, which do hold in our situation.  Moreover, it is in general position cofinitely because removing a finite collection of sets preserves vague convergence by Lemma~\ref{lemma:vague-conv-removed-atoms} and the limiting reduced process still satisfies the assumption (i) and (iii) of Lemma 6.4 from~\cite{mm22}.  Assume that these sets are in general position starting from some index $N$ and put $n_0 = \max(N, m)$. Applying the same arguments for $\sA^{(n)}, n \geq n_0$ as we have used above for $\sA$, we can write the following equality:
        \begin{equation*}
             \bigcap_{J\subset I_n,|J|=m} \ch\left(\bigcup_{i \in I_n \setminus J} A_i^{(n)}\right)  =  \bigcap_{J \subseteq \bigcup_{k = 1}^{m} \sC^{(k)}(\sA^{(n)}),|J|=m} \ch\left(\bigcup_{i \in I_n \setminus J} A_i^{(n)}\right),\quad n\geq n_0.
        \end{equation*}
Combining this with~\eqref{eq:thm42-proof1} we see that it suffices to prove
        \begin{equation}\label{final-conv}
            \bigcap_{J \subseteq \bigcup_{k = 1}^{m} \sC^{(k)}(\sA^{(n)}),|J|=m} \ch\left(\bigcup_{i \in I_n \setminus J} A_i^{(n)}\right)~\overset{H}{\to}~\bigcap_{J \subseteq \bigcup_{k = 1}^{m} \sC^{(k)}(\sA),|J|=m} \ch\left(\bigcup_{i \in I \setminus J} A_i\right).
        \end{equation}
        By Theorem~\ref{thm:eventual-equality-layers} for all sufficiently large $n$,
        $$
        \bigcup_{k = 1}^{m} \sC^{(k)}(\sA^{(n)})=\bigcup_{k = 1}^{m} \sC^{(k)}(\sA).
        $$
        This holds because Assumption (3) implies, in particular, that $0\in \Int(\ch_{[m+1]}(\sA))$. Thus, intersections on both sides of~\eqref{final-conv} are taken over the same index set for all sufficiently large $n$. By Lemma~\ref{thm:converg-remove-m-points}, for every fixed $J\subseteq \bigcup_{k = 1}^{m} \sC^{(k)}(\sA)$ such that $|J|=m$, we have
        $$
        \ch\left(\bigcup_{i \in I_n \setminus J} A_i^{(n)}\right)~\overset{H}{\to}~\ch\left(\bigcup_{i \in I \setminus J} A_i\right),\quad n\to\infty.
        $$
        It remains to apply Theorem 1.8.10 from~\cite{schneider}, which provides sufficient conditions ensuring continuity of intersection with respect to Hausdorff distance. In our case these conditions holds because sets in focus all contain the origin in the interiors. The proof of Theorem~\ref{thm:m-point-peeling-convergence-determ} is complete.

\subsection{Proof of Theorems~\ref{thm:recursive-peeling-convergence-random} and~\ref{thm:m-point-peeling-convergence-random}}
Put 
$$
\sL_{\Xi_n}:=\{n^{-1}(K-\xi_i)^{o}:1\leq i\leq n\},\quad \sL_{\Pi}:=\{[0,x]:x\in \Pi_K\}.
$$
By Theorem 5.6 in~\cite{mm22}, the following convergence in distribution holds true
\begin{equation*}
    \mu_n:=\sum_{i=1}^n \delta_{\,n^{-1}(K-\xi_i)^o} \overset{d}{\to} \sum_{x\in \Pi_K}\delta_{[0,x]} =: \mu.
\end{equation*}
with respect to the vague topology on the space of point measures on $\sK^d_0\setminus\{0\}$.
By the Skorokhod representation theorem (Theorem 4.30 in \cite{kallenberg-foundations}), we may pass to a new
probability space on which there exist versions of $\mu_n$ and $\mu$ (which we denote by the
same symbols) such that
\begin{equation*}
    \mu_n~\overset{{\rm v}}{\to}~\mu
    \qquad \text{almost surely.}
\end{equation*}
Thus, the key convergence~\eqref{thm:m-point-peeling-convergence-determ-a1} holds true. The only assumption which concerns the pre-limit processes is Assumption (1) in Theorem~\ref{thm:m-point-peeling-convergence-determ}. It holds true because strict convexity and regularity are preserved by the polarity. Thus, if $K$ is strictly convex and regular, then each set $n^{-1}(K-\xi_i)^{o}$ is strictly convex and regular. 

It remains to check that all the assumptions on the limiting point process are satisfied by $\sum_{x\in \Pi_K}\delta_{[0,x]}$. These follow from the following easy facts:

\vspace{2mm}
\noindent

\vspace{2mm}
\noindent
1) With probability one, for every finite $F\subset\Pi_K$, the convex hull
$$
P_F:=\ch\Bigl(\bigcup_{x\in\Pi_K\setminus F}[0,x]\Bigr)
$$
is a polytope containing the origin in its interior. In particular, $\ch(\sL_\Pi)$ is a polytope containing the origin in its interior. Moreover, $D(\sL_\Pi)=+\infty$, and, for every $m\in\mathbb N$, the recursive peeling \(\ch_{[m]}(\sL_\Pi)\) is also a polytope containing the origin in its interior. To see this, note that deleting finitely many points from $\Pi_K$ still leaves infinitely many points in every neighbourhood of the origin and in
every non-empty open cone of directions, while only finitely many points remain outside each ball $B_r(0)$. Hence the remaining points still positively span $\mathbb R^d$, so the corresponding convex hull contains the origin in its interior. Choosing a finite subset of the remaining points whose convex hull already contains the origin in its interior, and then choosing $\rho>0$ such that $B_\rho(0)$ is contained in this finite convex hull, all points of $\Pi_K\setminus F$ lying inside $B_\rho(0)$ are redundant, while only finitely many points lie outside $B_\rho(0)$. Thus, $P_F$ is a polytope. Finally, for such a cofinite family the set of contributing segments is finite and non-empty: finiteness follows because every contributing segment must have its endpoint outside a sufficiently small ball contained in the interior of $P_F$. Therefore, each recursive peeling step removes a finite non-empty collection of segments, and the same argument applies inductively. Thus, the recursive peeling never terminates.

\vspace{2mm}
\noindent
2) With probability one $\sL_{\Pi}$ is in general position cofinitely. This follows from the fact that the collection of points $\{x:x\in \Pi_K\}$ is in general position in the usual sense (no $m+2$ points lie in an affine subspace of dimension $m$, for all $1\leq m\leq d$), see also Example 3.8 in~\cite{mm22}. However, it is not true that every subfamily of $\sL_{\Pi}$ is in general position, the counterexample being $\{[0,x],[0,y]\}\subset \sL_{\Pi}$, $x,y\in \Pi_K$. It is not in general position because $0$ is a $0$-dimensional exposed face of $\ch\{0,x,y\}$ which intersects two members of $\{[0,x],[0,y]\}$.

By the above discussion, $0\in \Int(\ch_{[k]}(\sL))$ for any $k\in\mathbb{N}$ and also $0\in \Int(\ch(\Pi_K\setminus F))$ for any finite set $F\subset \Pi_K$. Fix an arbitrary $m\in\NN$. Applying Theorem~\ref{thm:eventual-equality-layers} separately for each $r=1,\dots,m$, we obtain
\begin{equation*}
    n^{-1}(X_n^o)_{[r]} \overset{H}{\to} (Z^o)_{[r]}
    \qquad \text{almost surely,}
\end{equation*}
for every $r=1,\dots,m$. Consequently,
\begin{equation*}
    \left(n^{-1}(X_n^o)^{[1]},\dots,n^{-1}(X_n^o)^{[m]}\right)
    \longrightarrow
    \left((Z^o)^{[1]},\dots,(Z^o)^{[m]}\right)
    \qquad \text{almost surely}
\end{equation*}
in $(\sK^d_{(0)})^m$, and therefore also in distribution.

This proves Theorem~\ref{thm:recursive-peeling-convergence-random} for peelings. The claim for wrappings follows by the continuous mapping theorem applied to the continuous map $\sK^d_{(0)}\ni L\mapsto L^{o}\in \sK^d_{(0)}$ in view of equality~\eqref{eq:wrapping_duality}. The proof of Theorem~\ref{thm:recursive-peeling-convergence-random} is complete.

The proof of Theorem~\ref{thm:m-point-peeling-convergence-random} follows from Theorem~\ref{thm:m-point-peeling-convergence-determ} via the same scheme.

\section*{Appendix}
\subsection{Proof of Lemma~\ref{lem:deterministic-layer-reduction}}
For the proof of Lemma~\ref{lem:deterministic-layer-reduction} we need yet another auxiliary result, Corollary~\ref{cor:multi-layer-inclusion} below, which itself follows from the next 

\begin{lemma}\label{lem:single-layer-inclusion}
Let $\sL=\{L_i : i\in I\}$ be a family of subsets of $\sK^d$. Let $i_0\in I$ and $1\leq r \leq D(\sL)-2$. Assume that either every subfamily of $\sL$ is in general position or $\sL$ is in general position cofinitely and $|\sC^{(s)}(\sL)|<+\infty$, for all $1\leq s\leq r+1$. Then
\begin{equation*}
  \bigcup_{s=1}^{r}\sC^{(s)}\bigl(\sL\setminus\{L_{i_0}\}\bigr)
  \ \subseteq\
  \bigcup_{s=1}^{r+1}\sC^{(s)}(\sL).
\end{equation*}
\end{lemma}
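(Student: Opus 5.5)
We argue by induction on $r$, writing $\sL':=\sL\setminus\{L_{i_0}\}$. Throughout we use $\sL'^{(s)}$ and $\sC^{(s)}(\sL')$ for the recursive layers of $\sL'$. The whole argument is driven by one comparison principle. For $s\le r$, put $T_{s}:=\bigcup_{q=1}^{s}\sC^{(q)}(\sL)$. We first show that every member of $\sL'^{(s)}$ lies in $\sL^{(s)}$ or in the layer $\sC^{(s)}(\sL)$; equivalently, the indices deleted from $\sL'$ in its first $s-1$ steps include all of $T_{s-1}\setminus\{i_0\}$ except possibly those in $\sC^{(s-1)}(\sL)$. We then use this to control which sets can contribute to $\sL'^{(s)}$.

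For $r=1$ the claim is $\sC^{(1)}(\sL')\subseteq \sC^{(1)}(\sL)\cup\sC^{(2)}(\sL)$. If $i_0\notin\sC^{(1)}(\sL)$, then $\ch(\sL')=\ch(\sL)$, and Lemma~\ref{lem:increase_family} gives $\sC^{(1)}(\sL')=\sC^{(1)}(\sL)$. If $i_0\in\sC^{(1)}(\sL)$, we repeat the argument of Lemma~\ref{lemma:remove-1-point-from-ch}. Take $j\notin \sC^{(1)}(\sL)\cup\sC^{(2)}(\sL)$. By Lemma~\ref{lem:noncontrib-interior} applied to the general-position family $\sL^{(2)}$, we get $L_j\subseteq{\rm relint}\,\ch_{[2]}(\sL)$. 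Next, $\ch_{[2]}(\sL)\subseteq\ch(\sL')$, and the two sets have equal dimension because $D(\sL)\ge 3$. Hence $L_j\subseteq{\rm relint}\,\ch(\sL')$, and Lemma~\ref{lemma:cont-set-intersect-boundary} shows that $j$ is not contributing to $\sL'$.

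For the inductive step we assume the inclusion for $r-1$, so $\bigcup_{s\le r-1}\sC^{(s)}(\sL')\subseteq T_{r}$. We must show $\sC^{(r)}(\sL')\subseteq T_{r+1}$. Take $j\in\sC^{(r)}(\sL')$ and suppose $j\notin T_{r+1}$. The key point is that $\ch_{[r+1]}(\sL)\subseteq\ch(\sL'^{(r)})$. To prove it, note that the indices of $\sL'^{(r)}$ form $I\setminus(\{i_0\}\cup\bigcup_{s<r}\sC^{(s)}(\sL'))$, which by the induction hypothesis contains $I\setminus(\{i_0\}\cup T_{r})$. If $i_0\in T_r$, this set equals $I_{r+1}$. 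If $i_0\notin T_r$, it contains $I_{r+1}\setminus\{i_0\}$. In the latter case we need $\ch(\sL^{(r+1)}\setminus\{L_{i_0}\})\supseteq\ch_{[r+2]}(\sL)$. This holds because $D(\sL)\ge r+2$ guarantees that $\sL^{(r+1)}$ is a legitimate nonterminal family, and then the $r=1$ case (Lemma~\ref{lemma:remove-1-point-from-ch}) applies when $i_0\in\sC^{(r+1)}(\sL)$. When $i_0\notin\sC^{(r+1)}(\sL)$, removing it does not change the hull at all.

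A cleaner route avoids $\ch_{[r+2]}$ altogether. If $i_0\notin T_{r}$, then $L_{i_0}\subseteq{\rm relint}\,\ch_{[s]}(\sL)$ for every $s\le r$ by Lemma~\ref{lem:noncontrib-interior}. So $i_0$ never contributes in the first $r$ steps, and Lemma~\ref{lem:increase_family} applied step by step gives $\sC^{(s)}(\sL')=\sC^{(s)}(\sL)$ for all $s\le r$. This case is therefore trivial. In the main case $i_0\in T_r$, we have $\ch(\sL'^{(r)})\supseteq\ch_{[r+1]}(\sL)$, with equal full dimension since $D(\sL)\ge r+2$. Lemma~\ref{lem:noncontrib-interior}, applied to $\sL^{(r+1)}$, gives $L_j\subseteq{\rm relint}\,\ch_{[r+1]}(\sL)$ because $j\notin\sC^{(r+1)}(\sL)$. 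Hence $L_j\subseteq{\rm relint}\,\ch(\sL'^{(r)})$, and Lemma~\ref{lemma:cont-set-intersect-boundary} contradicts $j\in\sC^{(r)}(\sL')$.

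Lemmas~\ref{lem:noncontrib-interior} and~\ref{lemma:cont-set-intersect-boundary} need general position and compactness of the subfamilies $\sL^{(s)}$ and $\sL'^{(s)}$. These follow because each is obtained from $\sL$ by removing finitely many sets, since $|\sC^{(s)}(\sL)|<\infty$ for $s\le r+1$ and the layers of $\sL'$ up to step $r-1$ sit inside $T_r$ by the induction hypothesis. The main obstacle I expect is this dimension and bookkeeping: the peeling of $\sL'$ must not terminate before step $r$, and $\dim\ch(\sL'^{(r)})=d_1$. This is where $r\le D(\sL)-2$ is used. If $\sL'$ does terminate early, its later layers are empty by convention, so the inclusion holds trivially.
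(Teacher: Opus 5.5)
Your proof is correct and follows the paper's argument. It uses induction on $r$. The case $i_0\notin T_r$ is handled by showing that the first $r$ layers of $\sL\setminus\{L_{i_0}\}$ and $\sL$ coincide (via Lemma~\ref{lem:increase_family}). The case $i_0\in T_r$ is handled by the inclusion $\sL^{(r+1)}\subseteq(\sL\setminus\{L_{i_0}\})^{(r)}$ together with Lemmas~\ref{lem:noncontrib-interior} and~\ref{lemma:cont-set-intersect-boundary}.

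You should delete the preliminary route through $\ch_{[r+2]}(\sL)$. Applying Lemma~\ref{lemma:remove-1-point-from-ch} to $\sL^{(r+1)}$ would require $D(\sL)\ge r+3$, which the hypothesis $r\le D(\sL)-2$ does not provide.
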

\begin{proof}
For $1\leq s\leq r$, denote $\sL^{(s)}=\{L_i : i\in I\setminus\bigcup_{q=1}^{s-1}\sC^{(q)}(\sL)\}$. We argue by induction on $r$.

\medskip
\noindent\emph{Base case $r=1$.} If $i_0\in\sC^{(1)}(\sL)$, Lemma~\ref{lemma:remove-1-point-from-ch} (with
$i=i_0$) gives
\begin{equation*}
  \ch\bigl(\sL\setminus\{L_{i_0}\}\bigr)
  =\ch\Bigl(\bigcup_{j\in I\setminus\{i_0\}}L_j\Bigr)
  =\ch\Bigl(\bigcup_{j\in(\sC^{(1)}(\sL)\cup\sC^{(2)}(\sL))\setminus\{i_0\}}L_j\Bigr).
\end{equation*}
Therefore, Lemma~\ref{lem:increase_family} yields
$\sC^{(1)}(\sL\setminus\{L_{i_0}\})\subseteq
\sC^{(1)}(\sL)\cup\sC^{(2)}(\sL)$.
If instead $i_0\notin\sC^{(1)}(\sL)$, 
$\sC^{(1)}(\sL\setminus\{L_{i_0}\})=\sC^{(1)}(\sL)$.
In both cases
$\sC^{(1)}(\sL\setminus\{L_{i_0}\})\subseteq
\sC^{(1)}(\sL)\cup\sC^{(2)}(\sL)$.

\medskip
\noindent\emph{Inductive step.}
Assume $\bigcup_{s=1}^{r-1}\sC^{(s)}(\sL\setminus\{L_{i_0}\})\subseteq
\bigcup_{s=1}^{r}\sC^{(s)}(\sL)$. It suffices to show
\begin{equation}\label{eq:lem-4-9-p1roof1}
\sC^{(r)}(\sL\setminus\{L_{i_0}\})\subseteq\bigcup_{s=1}^{r+1}\sC^{(s)}(\sL). 
\end{equation}
We distinguish two cases.

\smallskip
\noindent\textit{Case 1: $i_0\notin\bigcup_{s=1}^{r}\sC^{(s)}(\sL)$.}
We claim that $\sC^{(s)}(\sL\setminus\{L_{i_0}\})=\sC^{(s)}(\sL)$ for
$s=1,\dots,r$, which is clearly sufficient for~\eqref{eq:lem-4-9-p1roof1}, and prove this by induction on $s$. For $s=1$, we know that $i_0\notin\sC^{(1)}(\sL)$, hence $\sC^{(1)}(\sL \setminus \{ L_{i_0} \}) = \mathcal{C}^{(1)}(\sL)$. This verifies the base of induction. If the equalities hold through some $s<r$, then the first $s$
layers removed from the two families agree.  Moreover,
$i_0\notin\sC^{(s+1)}(\sL)=\sC^{(1)}(\sL^{(s+1)})$, so
we have
$\sC^{(s+1)}(\sL\setminus\{L_{i_0}\})=\sC^{(s+1)}(\sL)$. This completes the inductive step and prove~\eqref{eq:lem-4-9-p1roof1} in Case 1.

\smallskip
\noindent\textit{Case 2: $i_0\in\bigcup_{s=1}^{r}\sC^{(s)}(\sL)$.}
Let $j\in\sC^{(r)}(\sL\setminus\{L_{i_0}\})$ and suppose, for contradiction, that
$j\notin\bigcup_{s=1}^{r+1}\sC^{(s)}(\sL)$ (this includes the case where $L_j$
never contributes to any layer of $\sL$). Then $L_j$ belongs to
$\sL^{(r+1)}$ (because $j\notin\bigcup_{s=1}^{r}\sC^{(s)}(\sL)$) and is non-contributing there (because $j\notin\sC^{(r+1)}(\sL))$). Hence, by Lemma~\ref{lem:noncontrib-interior},
\begin{equation}\label{eq:interior}
  L_j\subset {\rm relint}\ch\bigl(\sL^{(r+1)}\bigr)={\rm relint}\ch_{[r+1]}(\sL).
\end{equation}
We claim $\sL^{(r+1)}\subseteq(\sL\setminus\{L_{i_0}\})^{(r)}$ as families.
The index set of $\sL^{(r+1)}$ is
$I\setminus\bigcup_{s=1}^{r}\sC^{(s)}(\sL)$, while that of
$(\sL\setminus\{L_{i_0}\})^{(r)}$ is
$(I\setminus\{i_0\})\setminus\bigcup_{s=1}^{r-1}\sC^{(s)}(\sL\setminus\{L_{i_0}\})$.
Let $k\in I\setminus\bigcup_{s=1}^{r}\sC^{(s)}(\sL)$. Since
$i_0\in\bigcup_{s=1}^{r}\sC^{(s)}(\sL)$, we have $k\ne i_0$; and by the inductive
hypothesis
$\bigcup_{s=1}^{r-1}\sC_s(\sL\setminus\{L_{i_0}\})\subseteq
\bigcup_{s=1}^{r}\sC_s(\sL)$, which does not contain $k$. Hence $k$ lies in
the second index set, proving the claim. Consequently,
\begin{equation*}
  \ch_{[r+1]}(\sL)=\ch\bigl(\sL^{(r+1)}\bigr)
  \subseteq\ch\bigl((\sL\setminus\{L_{i_0}\})^{(r)}\bigr).
\end{equation*}
Since all the sets are of the same dimension because $r+1 \leq D(\sL)-1$, by~\eqref{eq:interior},
$L_j\subset{\rm relint}\ch\bigl((\sL\setminus\{L_{i_0}\})^{(r)}\bigr)$. By Lemma~\ref{lemma:cont-set-intersect-boundary} $L_j$ is  non-contributing in $(\sL\setminus\{L_{i_0}\})^{(r)}$, that is,
$j\notin\sC^{(1)}\bigl((\sL\setminus\{L_{i_0}\})^{(r)}\bigr)=
\sC^{(r)}(\sL\setminus\{L_{i_0}\})$, a contradiction. Therefore,~\eqref{eq:lem-4-9-p1roof1} holds also in Case 2.
The proof is complete.
\end{proof}

\begin{corollary}\label{cor:multi-layer-inclusion}
Under the assumptions of Lemma~\ref{lem:single-layer-inclusion}, for every fixed finite $J\subseteq I$ the following holds \emph{for all} $1\leq r\leq D(\sL)-|J|-1$ simultaneously:
\begin{equation*}
  \bigcup_{s=1}^{r}\sC^{(s)}\bigl(\sL\setminus\{L_i : i\in J\}\bigr)
  \ \subseteq\
  \bigcup_{s=1}^{\,r+|J|}\sC^{(s)}(\sL).
\end{equation*}
\end{corollary}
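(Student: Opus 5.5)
The plan is to argue by induction on $k:=|J|$, removing the indices of $J$ one at a time and applying Lemma~\ref{lem:single-layer-inclusion} at each stage. The case $k=0$ is trivial. For $k=1$ the claim is exactly Lemma~\ref{lem:single-layer-inclusion}, valid for $1\le r\le D(\sL)-2$, which is the stated range.

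For the inductive step, write $J=J'\cup\{i_0\}$ with $|J'|=k-1$, and put $\sL':=\sL\setminus\{L_i:i\in J'\}$. First apply Lemma~\ref{lem:single-layer-inclusion} to the family $\sL'$ and the index $i_0$. This gives
$$
\bigcup_{s=1}^{r}\sC^{(s)}(\sL'\setminus\{L_{i_0}\})\subseteq\bigcup_{s=1}^{r+1}\sC^{(s)}(\sL'),
$$
provided $r\le D(\sL')-2$. Then apply the induction hypothesis to $\sL$ and $J'$ with $r+1$ in place of $r$. This bounds the right-hand side by $\bigcup_{s=1}^{r+1+(k-1)}\sC^{(s)}(\sL)=\bigcup_{s=1}^{r+k}\sC^{(s)}(\sL)$, provided $r+1\le D(\sL)-(k-1)-1$, i.e. $r\le D(\sL)-k-1$. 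That is exactly the admissible range in the corollary, so the two inclusions chain together.

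Two side conditions must be checked so that Lemma~\ref{lem:single-layer-inclusion} applies to $\sL'$.

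\textbf{Hypotheses transfer to $\sL'$.} If every subfamily of $\sL$ is in general position, the same holds for $\sL'$. If instead $\sL$ is in general position cofinitely, then so is $\sL'$, since $J'$ is finite. The finiteness of the layers $\sC^{(s)}(\sL')$ for $s\le r+1$ follows from the induction hypothesis: these layers are contained in the finite set $\bigcup_{s\le r+k}\sC^{(s)}(\sL)$. This containment is among the assumed finite layers as long as the corresponding index bound is respected; if needed, I would strengthen the inductive statement to carry finiteness of the relevant layers of the reduced families along.

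\textbf{Depth lower bound.} We need $D(\sL')\ge D(\sL)-(k-1)$, which gives $r\le D(\sL')-2$ whenever $r\le D(\sL)-k-1$. I expect this to be the main obstacle: showing that deleting one set reduces the depth by at most one, i.e. $D(\sL\setminus\{L_{i_0}\})\ge D(\sL)-1$. The attempt would use the inclusion $\sL^{(s+1)}\subseteq(\sL\setminus\{L_{i_0}\})^{(s)}$ of families, established in Case~2 of the proof of Lemma~\ref{lem:single-layer-inclusion} (in Case~1 the layers simply coincide). From this inclusion, $\ch_{[s+1]}(\sL)\subseteq\ch_{[s]}(\sL\setminus\{L_{i_0}\})$ for $s\le D(\sL)-2$, so the dimension stays equal to $d_1$ there. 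Non-emptiness of the contributing layers of $\sL\setminus\{L_{i_0}\}$ at these steps would follow from Lemma~\ref{lemma:ch-is-ch-of-contrib}, because a compact hull of positive dimension generated by a general-position family cannot be spanned by an empty set of contributors. Iterating this over the $k-1$ deletions in $J'$ gives the required depth bound, and with it the corollary.
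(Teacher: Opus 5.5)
Your proposal is correct and follows the paper's own argument: induction on $|J|$, writing $J=J'\cup\{i_0\}$, applying Lemma~\ref{lem:single-layer-inclusion} to $\sL\setminus\{L_i:i\in J'\}$ at level $r$, and then applying the induction hypothesis at level $r+1$, with the same index bookkeeping. You also check side conditions that the paper covers with a one-line remark or leaves implicit: finiteness of the layers of the reduced family (via the induction hypothesis) and the depth bound. For the latter you only need the local statement that the first $r+1$ steps of the reduced family are non-terminal, which keeps the required finiteness within $\bigcup_{s\le r+|J|}\sC^{(s)}(\sL)$.
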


\begin{proof}
The induction is on $q:=|J|$ alone. The statement attached to $q$ is the proposition
\begin{multline*}
  P(q):\qquad
  \text{for every $J\subseteq I$ with $|J|=q$ \emph{and every} $1\leq r\leq D(\sL)-q-1$,}\\
  \bigcup_{s=1}^{r}\sC^{(s)}\bigl(\sL\setminus\{L_i : i\in J\}\bigr)
  \subseteq\bigcup_{s=1}^{\,r+q}\sC^{(s)}(\sL).
\end{multline*}

\smallskip
\noindent\emph{Base.} $P(1)$ is simply Lemma~\ref{lem:single-layer-inclusion}.
 
\smallskip
\noindent\emph{Step.} Assume $P(q-1)$ with $q\ge 2$. Fix an arbitrary $J$ with $|J|=q$ and
an arbitrary $1\leq r\leq D(\sL)-q-1$. Write $J=J'\cup\{i_0\}$ with
$i_0\notin J'$, so $|J'|=q-1$, and set $\mathcal{G}:=\sL\setminus\{L_i : i\in J'\}$, which is in general position.
Since $r\leq D(\sL)-q-1$, the reduced family $\mathcal{G}$ has depth at least $r+2$. Indeed, deleting $q-1$ sets cannot destroy all of the first $r+q$ non-terminal layers of $\sL$. Hence Lemma~\ref{lem:single-layer-inclusion} applied to $\mathcal{G}$ at the level $r$, deleting the
single set $L_{i_0}$. We have:
\begin{equation}\label{eq:stepA}
  \bigcup_{s=1}^{r}\sC^{(s)}\bigl(\mathcal{G}\setminus\{L_{i_0}\}\bigr)
  \ \subseteq\
  \bigcup_{s=1}^{r+1}\sC^{(s)}(\mathcal{G}).
\end{equation}
The hypothesis $P(q-1)$ holds for the deletion set $J'$ of cardinality $q-1$ and all $1\leq r\leq D(\sL)-q-1$. Thus, for the right-hand side of~\eqref{eq:stepA} we have
\begin{equation}\label{eq:stepB}
  \bigcup_{s=1}^{r+1}\sC^{(s)}(\mathcal{G})
  \ \subseteq\
  \bigcup_{s=1}^{(r+1)+|J'|}\sC^{(s)}(\sL)
  =\bigcup_{s=1}^{\,r+q}\sC^{(s)}(\sL).
\end{equation}
Combining these inclusions yields
$$
\bigcup_{s=1}^{r}\sC^{(s)}\bigl(\sL\setminus\{L_i : i\in J\}\bigr)=\bigcup_{s=1}^{r}\sC^{(s)}\bigl(\mathcal{G}\setminus\{L_{i_0}\}\bigr)\subseteq \bigcup_{s=1}^{\,r+q}\sC^{(s)}(\sL).
$$
As $J$ and $r$ were arbitrary, $P(q)$ holds, and the induction on $q$ is complete.
\end{proof}

We are now in position to prove Lemma~\ref{lem:deterministic-layer-reduction}.

\begin{proof}[Proof of Lemma~\ref{lem:deterministic-layer-reduction}]
Observe first that $D(\sL)\geq m+2$ implies ${\rm dim}\,\ch\left(\cup_{i\in I\setminus J}L_i\right)=d_0$ for any set $|J|$ of cardinality $m$ (or smaller).

Write $\sL\setminus\{L_i : i\in J_m\}$ for the reduced family, which is in general
position as a cofinite subfamily of $\sL$. By Lemma~\ref{lemma:ch-is-ch-of-contrib} applied to it,
\begin{equation}\label{eq:l44-km}
  \ch\left(\bigcup_{j\in I\setminus J_m}L_j\right)
  =\ch\left(\bigcup_{j\in\sC^{(1)}(\sL\setminus\{L_i : i\in J_m\})}L_j\right),
\end{equation}
Corollary~\ref{cor:multi-layer-inclusion} with $r=1$ and $J=J_m$ gives
\begin{equation*}
  \sC^{(1)}\bigl(\sL\setminus\{L_i : i\in J_m\}\bigr)
  \subseteq\bigcup_{r=1}^{m+1}\sC^{(r)}(\sL).
\end{equation*}
Since the left-hand side does not contain any elements of $J_m$
\begin{equation*}
  \sC^{(1)}\bigl(\sL\setminus\{L_i : i\in J_m\}\bigr)
  \subseteq\Bigl(\bigcup_{r=1}^{m+1}\sC^{(r)}(\sL)\Bigr)\setminus J_m
  \subseteq I\setminus J_m.
\end{equation*}
Combining this with~\eqref{eq:l44-km},
\begin{equation*}
  \ch\Bigl(\bigcup_{j\in I\setminus J_m}L_j\Bigr)
  =\ch\Bigl(\bigcup_{j\in\sC^{(1)}(\sL\setminus\{L_i : i\in J_m\})}L_j\Bigr)
  \subseteq
  \ch\Bigl(\bigcup_{j\in(\bigcup_{r=1}^{m+1}\sC^{(r)}(\sL))\setminus J_m}L_j\Bigr)
  \subseteq
  \ch\Bigl(\bigcup_{j\in I\setminus J_m}L_j\Bigr).
\end{equation*}
All three sets coincide, which is the assertion.
\end{proof}  

\bibliographystyle{apalike}
\bibliography{references_file}

\end{document}